\documentclass[a4paper, 10pt, leqno]{article}
\usepackage{pkg}
\usepackage{ncmd}
\usepackage{mathtools}
\usepackage{xcolor}
\usepackage{tcolorbox}
\usepackage{comment}

\usepackage[utf8]{inputenc}

\title{LoCCA: Localized Chebyshev Cross Approximation for Kernel Matrix Factorization via Nodal Perturbation Stability\thanks{submitted to the editors \today}}

\author[4,1]{Sumit Singh\,\orcidlink{0009-0002-5581-5349}\thanks{\url{sumit1315singh@gmail.com}, \url{ma22d027@smail.iitm.ac.in}}}
\author[4]{Shrirup Dutta\,\orcidlink{0009-0005-3479-8895} \thanks{\url{shrirupdutta@gmail.com}, \url{ma24d007@smail.iitm.ac.in}}}
\author[1,2,3,4]{Sivaram Ambikasaran\,\orcidlink{0000-0003-2978-6281}\thanks{\url{sivaambi@dsai.iitm.ac.in}, \url{sivaambi@alumni.stanford.edu}}}

\affil[1]{Wadhwani School of Data Science and Artificial Intelligence, IIT Madras, Chennai, India}
\affil[2]{Robert Bosch Center for Data Science and Artificial Intelligence, IIT Madras, Chennai, India}
\affil[3]{Department of Data Science and Artificial Intelligence, IIT Madras, Chennai, India}
\affil[4]{Department of Mathematics, IIT Madras, Chennai, India}

\date{}
\begin{document}

\maketitle

\begin{abstract}
    We propose \textit{Local Chebyshev Cross-Approximation} (LoCCA), a data-driven framework that bridges smooth polynomial interpolation with flexible matrix factorizations. LoCCA dynamically maps ideal grid nodes to their nearest physical neighbors within unstructured point sets. We support this framework with a new perturbation theory, proving that displacing Chebyshev nodes onto physical data points retains mathematical stability and accuracy without explosive error growth. Numerically, LoCCA provides strict, reliable error control even on irregular geometries where standard grid-based methods fail, achieving substantial speedups and near-optimal matrix compression.
\end{abstract}

\paragraph{Keywords:} Low-rank approximation, Chebyshev Interpolation, Lebesgue constant, Adaptive Cross Approximation.

\paragraph{MSc Classification:} 65F55, 65D05, 41A10, 41A63.

\section{Introduction} \label{sec: Introduction}
Kernel matrices are a fundamental component of modern scientific computing and computational engineering. They arise in a wide variety of computational problems, ranging from the numerical solution of partial differential equations \cite{greengard1987fast, massei2022hierarchical} and integral equations \cite{ho2016hierarchical} to Gaussian process models \cite{ambikasaran2015fast}, inverse problems \cite{ambikasaran2013large}, and kernel-based machine learning methods \cite{cortes1995support}. In many of these applications, the underlying mathematical model is governed by a \textit{kernel function} describing pairwise interactions between points in the computational domain. Let $\mclk:\mclx\times\mcly\rightarrow\bbr$ be a continuous kernel function, where $\mclx,\mcly\subset\bbr^{d}$ are the source and target domains, respectively. Such kernels arise naturally in the formulation of boundary and volume integral equations. A general form of these equations is given by
\begin{equation}\label{equ: fundamental equaion}
    a(x)u(x)+b(x)\int_{\mcly}\mathcal{K}(x,y)c(y)u(y)dy=f(x), \qquad x\in\mclx.
\end{equation}
Here, $a$, $b$, and $c$ are prescribed coefficient functions, $f$ is a given function, and $u$ is the unknown function to be determined. Discretization of \eqref{equ: fundamental equaion} yields a linear system $\mathbf{K}\mathbf{u}=\mathbf{f},$ where $\mathbf{K}\in\mathbb{R}^{N\times N}$ is the dense kernel matrix induced by the kernel function $\mathcal{K}$ and the underlying discrete configurations of physical sampling points $X = \{x_1,\dots,x_N\} \subset \mclx$ and $Y = \{y_1,\dots,y_N\} \subset \mcly$. The matrix $\mathbf{K}$ is given component-wise by $\mathbf{K}_{ij} = \mathbf{K}(x_i, y_j)$,  which can be interpreted as $\mathbf{K} = \mclk(X, Y)$ (in \textsc{Matlab} notation). For simplicity, we restrict our discussion to square kernel matrices, although all subsequent developments extend naturally to the rectangular case.

The kernel matrix $\mathbf{K}$ is generally dense, leading to $\mathcal{O}(N^2)$ storage requirements and computationally expensive matrix operations. Fortunately, while $\mathbf{K}$ is typically full-rank, many of its off-diagonal sub-matrices  $\mathcal{K}(\bar{X}, \bar{Y})$ corresponding to geometrically well-separated or \emph{admissible} subsets $\bar{X} \subset X$ and $\bar{Y} \subset Y$ exhibit numerically low-rank structure \cite{khan2024hodlrdd,singh2025rank}. This observation forms the basis of many modern hierarchical matrix representations and fast kernel algorithms. Consequently, \textit{the efficient computation of low-rank approximations for such sub-matrices is one of the fundamental computational tasks}.

\paragraph{Earlier Works:}
A broad range of techniques has been developed to construct low-rank approximations for the dense kernel matrices. These techniques can be broadly grouped into two classes: \textit{algebraic methods} and \textit{analytic methods}.

Algebraic methods rely solely on the matrix entries and make no assumptions about the analytical properties of the underlying kernel. Classic examples include rank-revealing factorizations, such as the singular value decomposition (SVD), rank-revealing QR (RRQR), interpolative decomposition (ID), and adaptive cross approximation (ACA) \cite{bebendorf2003adaptive}. Among these, ACA has emerged as one of the most widely used techniques for hierarchical matrix construction, constructing the approximation using only selected pivot rows and columns through successive rank-one updates, thereby avoiding explicit access to the full matrix. The convergence depends on the sampled matrix rather than on the kernel's analytical properties, and convergence may be slow or even fail for certain domain configurations \cite{borm2005hybrid}.

In contrast to algebraic methods, analytic methods exploit the kernel's analytical structure to construct low-rank approximations. These methods are based on degenerate kernel expansions of the form $\mathcal{K}(x,y)\approx \sum_{k=1}^{r}\phi_k(x)\psi_k(y)$, from which a low-rank matrix factorization follows immediately upon discretization. Such degenerate approximations can be obtained by Taylor series expansions, multipole expansions \cite{greengard1987fast}, Chebyshev interpolations \cite{fong2009black}, etc. These approaches explicitly construct the basis matrices underlying the low-rank approximation. However, the resulting factorizations do not yield a \textit{skeleton (or $\mathrm{CUR}$) representation}, as the low-rank basis matrices are formed by evaluating the basis functions $\{\phi_k\}$ and $\{\psi_k\}$ rather than being extracted directly from the rows and columns of the kernel matrix.

To overcome this, \textit{hybrid methods} have gained attention in recent years, combining the geometric or analytical robustness of analytic kernels with the data-aware efficiency of algebraic compression. A notable paradigm in this category is the class of \emph{proxy point methods}, introduced to accelerate the Interpolative Decomposition (ID) and Skeletonization processes in fast multipole methods (FMM) and hierarchical matrix factorizations \cite{yesypenko2025simplified, xing2020interpolative, ye2020analytical}. Instead of interactions between a cluster and the full far-field clusters, these approaches evaluate interactions over a small, artificially engineered continuous boundary (e.g., circles or spheres wrapped around a target sub-domain). This analytical proxy representation captures the full row or column span, enabling stable selection of skeleton nodes via algebraic decompositions such as the strong RRQR factorization at minimal cost.

However, Cambier and Darve introduced \emph{Skeletonized Interpolation (SI)} \cite{cambier2019fast}, which explicitly bridges Chebyshev polynomial approximation and matrix skeletons. Rather than using a continuous proxy surface, SI evaluates the target kernel function on rigid Cartesian tensor-product grids of Chebyshev nodes covering the spatial domains and applies strong RRQR factorizations to select optimal skeletons directly from the grid. This approach yields a true skeleton ($\mathrm{CUR}$) representation while inheriting the strong, stable convergence bounds of classical polynomial interpolation theory.

Despite their utility, proxy and skeletonized interpolation frameworks are structurally tethered to idealized geometric layouts. Imposing rigid grids or proxy surfaces onto arbitrary, highly unstructured physical point cloud domains introduces severe geometric friction. This motivates a natural generalization: dynamically displacing nodes of an ideal grid (such as a Chebyshev grid) to their nearest \textit{physical/local} neighbors. However, shifting nodes away from a structured grid violates standard interpolation results, leaving a critical theoretical gap that this work aims to address:

\begin{center}
    \begin{tcolorbox}[
        width=0.91\textwidth,  
        colback=black!5,       
        colframe=black,        
        boxrule=1.2pt,         
        arc=3mm,               
        halign=left,           
        top=1.4mm, bottom=1.2mm, left=2mm, right=2mm, 
        boxsep=1mm             
    ] 
    How can we systematically construct cross-approximations from entirely unstructured point clouds by displacing ideal grid nodes, and can we establish a new, rigorous perturbation theory showing that Chebyshev stability is preserved despite these structural node displacements?
    \end{tcolorbox}
\end{center}

\paragraph{Our Contribution:}

This work introduces two foundational pillars: $(i)$ a constructive polynomial interpolation framework interpreting localized node selections as structured perturbations of ideal Chebyshev grids, and $(ii)$ a data-driven matrix cross-approximation algorithm built directly upon this analytical foundation.

First, we establish a new, comprehensive perturbation theory for Chebyshev polynomial interpolation. Shifting classical Chebyshev grids onto entirely unstructured point configurations typically violates the rigid geometric layouts required by standard constructive approximation proofs. We address this limitation by proving that localized, nearest-neighbor physical node selections can be rigorously analyzed as structured perturbations of ideal Chebyshev nodes, thereby showing that polynomial interpolation stability is preserved despite spatial displacements.

Second, leveraging this theoretical stability, we introduce the \emph{Local Chebyshev Cross-Approximation (LoCCA)} algorithmic framework to bridge abstract function spaces and algebraic low-rank matrix factorizations. LoCCA operates in two distinct phases: $(a)$ the \emph{Localized Chebyshev Nodes Selection (LoC-NS)} protocol filters irregular point clouds into high-quality neighbor sets that inherit the analytical features of a Chebyshev grid, and $(b)$ a localized cross-approximation phase executes Rank Revealing QR (RRQR) factorizations on the resulting submatrix to construct skeleton representation.

\paragraph{Highlights of the Article:}
The distinct theoretical components and computational contributions of this work are summarized as follows:
\begin{itemize}
    \item We prove a stability bound (\autoref{thm: unconditional bound}) for the Lebesgue constant $\tilde{\Lambda}_p$ under a first-kind Chebyshev node grid perturbation $\delta$, establishing that the perturbed operator norm scales polynomial-logarithmically with $p$, satisfying $\tilde{\Lambda}_p \le p^C \Lambda_p = \mathcal{O}(p^C \log p)$.
    
    \item We establish sharp deterministic bounds (\autoref{thm: Stability under node perturbations}) tracking the deviation between the unperturbed and perturbed Lagrange interpolant operators ($\| I_p f - \tilde{I}_p f \|_\linf$), proving it is controlled by the spatial perturbation distance $\delta$ and scales polynomial-logarithmically with $p$ for smooth functions.
    
    \item We extend this perturbation stability rigorously to multivariate continuous hypercubes $\Omega = [-1,1]^d$. 
    
    
    \item We develop a two-stage algorithm that first filters irregular point clouds into high-quality candidate sets via localized neighbor tracking (\autoref{alg: loc_nbds}), and subsequently extracts the low-rank skeleton submatrices utilizing strategic rank-revealing QR factorizations (\autoref{alg: LoCCA}).
    
    \item  We provide a global cross-approximation error (\autoref{thm: LoCCA convergence theorem}) bound on unstructured points in terms of the precision $\varepsilon$, which indicates, if $\varepsilon\to 0$, the approximation converges to the actual kernel $\mclk(x,y)$. 
    

    \item Through extensive numerical benchmarks across different kernels and spatial discretizations, we validate the accuracy, robustness, and computational scalability of the LoCCA framework.
    \begin{itemize}
        \item LoCCA achieves compact error spreads comparable to Skeletonized Interpolation (SI) while eliminating the wide error variance and extreme outlier spikes observed in ACA (\autoref{fig: boxplot_all_kernels_all_tols_TSVD}). Moreover, with similar accuracy, LoCCA achieves up to a $10\times$ speedup over ACA (\autoref{fig: time vs grid}).
        \item 
        Crucially, we observe failures of classical Skeletonized Interpolation (SI) in unstructured settings (\autoref{fig: arc_dot_domains_data} and \autoref{fig: concentric_domains_data}), where rigid tensor-product constraints induce sample starvation or severe geometric misalignment, thereby validating the necessity of our localized perturbation approach.
    \end{itemize}
    
\end{itemize}

\paragraph{Outline of the Article:}
The remainder of the paper is organized as follows. \autoref{sec: Preliminaries} reviews the fundamental background on interpolation properties. \autoref{sec: The Algorithm} presents the algorithmic formulation of LoC-NS and LoCCA algorithms. \autoref{sec: Theoretical Analysis} provides the rigorous mathematical proofs for our node perturbation bounds, multivariate extensions, and convergence properties. \autoref{sec: Numerical Experiments} validates our theoretical findings across diverse scientific kernels and domains, followed by concluding remarks in \autoref{sec: conclusion}.

\section{Preliminaries} \label{sec: Preliminaries}

In this section, we introduce the mathematical foundations, notation, and structural metrics necessary for our analytical and algorithmic developments. These elements serve as the formal prerequisites for studying interpolation operators under node perturbations. Frequently used notations are mentioned in \autoref{tab: notations}.

\begin{table}[ht]
\centering
\renewcommand{\arraystretch}{1.12}
\begin{tabular}{ll}
\hline
\textbf{Notation} & \textbf{Description} \\
\hline

$\mclx,\mcly$  & Source and Target Domains in $\bbr^d$ \\

$\mclk$ & Continuous kernel function defined on $\mclx \times \mcly$  \\

$X,Y$ & Discrete point clouds sampled from $\mclx$ and $\mcly$ \\

$n$ & Number of discretization points per spatial direction \\

$N$ & Number of Grid points in each domain (i.e., $N = n^d$)\\

$\mathbf{K}$ & Kernel matrix with entries $\mathbf{K}_{ij}=\mathcal{K}(x_i,y_j)$ \\

$p$ & Number of Chebyshev nodes per spatial direction \\

$\mathcal{C}_{\mclx}, \mathcal{C}_{\mcly}$ & Tensor-product grids of Chebyshev nodes over $\mclx$ and $\mcly$ \\

$P$ & Size of Tensor-product grids of Chebyshev nodes, i.e., $P = \#(\mathcal{C}_{\mclx}) = \#(\mathcal{C}_{\mcly}) = p^d $ \\

$\xi_i^{(\mclx)}, \xi_j^{(\mcly)}$ & Individual Chebyshev nodes in $\mclx$ and $\mcly$\\


$\mathcal{N}_\mclx,\mathcal{N}_\mcly$ & Collections of local neighbors selected around Chebyshev nodes from $X$ and $Y$ \\

$\Lambda_p, \tilde{\Lambda}_p$ & Unperturbed and perturbed univariate Lebesgue constants, respectively \\

\hline
\end{tabular}
\caption{Frequently used notations.}
\label{tab: notations}
\end{table}

\subsection{Basic Definitions}


\begin{mydfn}[Fill Distance]
Let $\Omega \subset \mathbb{R}^d$ be a bounded domain. For a discrete point cloud configuration $X = \{\mathbf{x}_i\}_{i=1}^N \subset \Omega$, the \emph{fill distance} (or mesh size) $h_{X}$ measures the maximum geometric coverage gap over the domain and is defined as $h_{X} = \sup_{\mathbf{y} \in \Omega} \min_{\mathbf{x}_i \in X} \|\mathbf{y} - \mathbf{x}_i\|_2.$
\end{mydfn}

\begin{mydfn}[Local Neighborhood of a Node Set]
Given a dense physical point cloud $X = \{\mathbf{x}_i\}_{i=1}^N \subset \Omega$ and a discrete target node set $\mathcal{C} = \{\boldsymbol{\xi}_k\}_{k=1}^P \subset \Omega$, the \emph{local neighborhood of size $l$} around a specific target node $\boldsymbol{\xi}_k \in \mathcal{C}$ within $X$ is denoted by $\mathcal{N}_l(\boldsymbol{\xi}_k)$ and defined as a subset satisfying $\mathcal{N}_l(\boldsymbol{\xi}_k) \subset X$ and $\#(\mathcal{N}_l(\boldsymbol{\xi}_k)) = l$, such that $  \max_{\mathbf{x} \in \mathcal{N}_l(\boldsymbol{\xi}_k)} \|\boldsymbol{\xi}_k - \mathbf{x}\|_2 \le \min_{\mathbf{x}' \in X \setminus \mathcal{N}_l(\boldsymbol{\xi}_k)} \|\boldsymbol{\xi}_k - \mathbf{x}'\|_2.$

The global collection of unique physical neighbors associated with the entire target node set $\mathcal{C}$ is given by the coordinate union $\mathcal{N}_\mathcal{C} = \bigcup_{k=1}^P \mathcal{N}_l(\boldsymbol{\xi}_k).$
\end{mydfn}

\begin{mydfn}[Supreme and $C^1$ Norms]
For a continuous scalar-valued function $f: \Omega \to \mathbb{R}$ belonging to the Banach space $C(\Omega)$, we employ the continuous supremum (uniform) norm defined as $\|f\|_{L^\infty(\Omega)} = \sup_{\mathbf{x} \in \Omega} |f(\mathbf{x})|$, which we abbreviate as $\|f\|_\linf$ when the domain is clear from context. 

Furthermore, we define the standard norm on the first-order continuously differentiable space $C^1(\Omega)$ as
\begin{equation}
    \|f\|_{C^1(\Omega)} = \|f\|_{L^\infty(\Omega)} + \sum_{k=1}^d \left\| \frac{\partial f}{\partial x_k} \right\|_{L^\infty(\Omega)}.
\end{equation}
\end{mydfn}
\begin{mydfn}[Numerical Rank]
Let $\mathbf{A} \in \mathbb{R}^{m \times n}$ be a matrix, and let its singular values be ordered as $\sigma_1 \ge \sigma_2 \ge \dots \ge \sigma_{\min\{m,n\}} \ge 0$. Given tolerance $\varepsilon > 0$, the \emph{numerical rank} $r_\varepsilon = \mathtt{rank}(\mathbf{A}, \varepsilon)$ of the matrix is defined as $r_\varepsilon = \max \left\{ k : \sigma_{k} \ge \varepsilon \sigma_1 \right\}.$
\end{mydfn}

\subsection{Chebyshev Interpolation and Node Perturbations}
Let $\{\xi_i\}_{i=1}^{p}$ denote the Chebyshev nodes of the first kind on $[-1,1]$, defined as the roots of the $p$-th degree Chebyshev polynomial $T_p(x)$
\begin{equation}\label{equ: cheb nodes}
    \xi_i = \cos \theta_i, \quad \text{where } \theta_i = {(2i-1)\pi}/{2p},\;  \text{ for all } i = 1, \dots, p.
\end{equation}
Let $m:= \min_{k\neq l} |\xi_k - \xi_l|$ denote the minimum separation, then it is straightforward to obtain the following bounds on $m$,
\begin{equation}\label{equ: grid spacing}
    \pi^2/p^2 \ge m \ge {\pi^2}/{2p^2}.
\end{equation}

Consider the Lagrange interpolation operator associated with Chebyshev nodes ${I}_p: C[-1,1] \to \mathbb{P}_{p-1}$, and induced by the $\linf$ norm, the operator norm $\|I_p\|_{\infty}$ represents the worst-case amplification of the functional values, which is quantified by the Lebesgue constant $\Lambda_p$ \cite[Theorem 1]{gunttner1980evaluation}

\begin{equation}\label{equ: lebesgue constant bound}
    \|{I}_p\|_\infty = {\Lambda}_p = \max_{x \in [-1,1]} \sum_{i=1}^p |l_i(x)| = \mclo{\log p} ,
\end{equation}
where the fundamental perturbed Lagrange basis polynomials on Chebyshev nodes are defined by ${l}_i(x) = \prod_{j \neq i} \frac{x - {\xi}_j}{{\xi}_i - {\xi}_j}$.

Let $\{\tilde{\xi}_i\}_{i=1}^{p} \subset [-1,1]$ be a set of perturbed nodes satisfying the proximity condition $|\tilde{\xi}_i - \xi_i| < \delta$ for all $i = 1, \dots, p$. To preserve node order and maintain a strict separation, we enforce the threshold constraint
\begin{equation}\label{equ: delta strict threshold}
    \delta \le {m}/{4}.
\end{equation}

\begin{myremark}\label{rmk: threshold_constant_generalization}
    The choice of the constant $\delta \leq m/4$ in \eqref{equ: delta strict threshold} is made without loss of generality and for presentation clarity, as it yields the convenient lower bound $|\tilde{\xi}_i - \tilde{\xi}_j| \ge \frac{1}{2}|\xi_i - \xi_j|$ in \eqref{equ: separation delta lower}. The theoretical results hold identically for any $\delta \le c \cdot m$ with a fixed constant $c \in (0, 1/2)$. 
\end{myremark}

The Lagrange interpolation operator associated with this perturbed node configuration is denoted by $\tilde{I}_p: C[-1,1] \to \mathbb{P}_{p-1}$, and its operator norm is given exactly by its respective Lebesgue constant
\begin{equation}\label{equ: perturbed lebesgue constnt definition} 
    \|\tilde{I}_p\|_\infty = \tilde{\Lambda}_p = \max_{x \in [-1,1]} \sum_{i=1}^p |\tilde{l}_i(x)|,
\end{equation}
where the fundamental perturbed Lagrange basis polynomials are defined by
\begin{equation}
    \tilde{l}_i(x) = \prod_{j \neq i} \frac{x - \tilde{\xi}_j}{\tilde{\xi}_i - \tilde{\xi}_j}.
\end{equation}

We next establish two foundational lemmas regarding harmonic summations over Chebyshev configurations, which are crucial for the theoretical analysis presented in \autoref{sec: Theoretical Analysis}.

\begin{mylma}\label{lma: chebyshev-harmonic}
Let $\{\xi_j\}_{j=1}^p$ be the first-kind Chebyshev nodes. Then there exists a positive constant $C_H > 0$, independent of $p$ and the target node index $k$, such that
\begin{equation}
    \sum^p_{\substack{j = 1, j \neq k}} \frac{1}{|\xi_j - \xi_k|} \le C_H p^2 \log p, \qquad \text{for each } k = 1, \dots, p.
\end{equation}
\end{mylma}

\begin{proof}
Fix $k \in \{1, \dots, p\}$. The spatial distance for $j \neq k$ can be expressed as
\begin{equation} \label{equ: | xi j - xi k|}
    |\xi_j - \xi_k| = 2 \left| \sin\left(\frac{\theta_j + \theta_k}{2}\right) \right| \left| \sin\left(\frac{\theta_j - \theta_k}{2}\right) \right|.
\end{equation}
Since $\theta_j, \theta_k \in (0, \pi)$, simple algebraic manipulation yields
\begin{equation} \label{equ: sin (theta j +- theta k) bound}
    \left| \sin\left(\frac{\theta_j + \theta_k}{2}\right) \right| \ge \sin\left(\frac{\pi}{2p}\right) \ge \frac{1}{p}, \;\text{ and }\;  \left| \sin\left(\frac{\theta_j - \theta_k}{2}\right) \right| = \sin\left(\frac{|j-k|\pi}{2p}\right) \ge \frac{|j-k|}{p}.
\end{equation}
Now, substituting the lower bounds \eqref{equ: sin (theta j +- theta k) bound} into \eqref{equ: | xi j - xi k|}, we get the Chebyshev node seperation bound as
\begin{equation}\label{equ: cheb node seperaion lower bound}
    |\xi_j - \xi_k| \ge 2 \cdot \frac{1}{p} \cdot \frac{|j-k|}{p} = \frac{2|j-k|}{p^2}.
\end{equation}
Taking the reciprocal and summing over all non-coincident indices $j \neq k$ yields the following
\begin{equation}
    \sum^p_{\substack{j = 1, j \neq k}} \frac{1}{|\xi_j - \xi_k|} \le \frac{p^2}{2} \sum_{j=1, j \neq k}^p \frac{1}{|j-k|} \le p^2 \sum_{r=1}^{p-1} \frac{1}{r}.
\end{equation}
Now the asymptotic growth of the harmonic numbers $H_{p-1} = \sum_{r=1}^{p-1} r^{-1} = \mathcal{O}(\log p)$ completes the lemma.
\end{proof}

\begin{mylma}\label{lma: restricted-sum}
Let $x \in [-1,1]$ and let $\xi_k$ be closest to $x$. Then there exists a uniform constant $C_1 > 0$, independent of $p$ and the index $k$, such that the grid-receptive sum satisfies
\begin{equation}
    \sum_{\substack{j = 1, j \neq k}}^p \frac{1}{|x - \xi_j|} \le C_1 p^2 \log p.
\end{equation}
\end{mylma}

\begin{proof}
By definition, since $\xi_k$ represents the closest Chebyshev node to the evaluation point $x$, we have $|x - \xi_k| \le |x - \xi_j|$ for all valid indices $j$. Applying the triangle inequality yields
\begin{equation}
    |\xi_j - \xi_k| \le |x - \xi_j| + |x - \xi_k| \le 2|x - \xi_j|,
\end{equation}
which implies $\frac{1}{|x - \xi_j|} \le \frac{2}{|\xi_j - \xi_k|}$. Now summing over $j \neq k$ and using 
\autoref{lma: chebyshev-harmonic} we have
\begin{equation}
    \sum_{\substack{j = 1 , j \neq k}}^p \frac{1}{|x - \xi_j|} \le 2 \sum_{\substack{j = 1, j \neq k}}^p \frac{1}{|\xi_j - \xi_k|} \le 2 C_H p^2 \log p.
\end{equation}
Setting $ C_1: = 2 C_H$ completes the proof.
\end{proof}

\begin{myremark}\label{rmk: threshold-log-bound}
    In the context of perturbation of Chebyshev nodes, the threshold condition $\delta \le m/4$, the scaling behavior typically satisfies a grid-spacing constraint of the form $\delta = \mathcal{O}(p^{-2})$ (by \eqref{equ: cheb node seperaion lower bound}). Under such an assumption, there exists a uniform constant $C_2 > 0$ such that $\delta p^2 \le C_2$. Consequently, \autoref{lma: restricted-sum} directly yields the dampened logarithmic bound:
    \begin{equation}
        \delta \sum_{\substack{j = 1 , j \neq k}}^p \frac{1}{|x - \xi_j|} \le C_1 (\delta p^2) \log p \le C_0 \log p,
    \end{equation}
    where $C_0 := C_1 C_2 > 0$ is a uniform constant independent of $p$.
\end{myremark}

\section{The Algorithm} \label{sec: The Algorithm}

In this section, we introduce numerical algorithms based on perturbed Chebyshev nodes. Rather than forcing a rigid grid onto unstructured datasets, our formulation treats selected local points directly as perturbed configurations of an ideal Chebyshev node. This translation bridges continuous function-space stability and discrete matrix operations through a clean, two-stage framework, the \emph{Localized Chebyshev Nodes Selection} (LoC-NS) method 
and the \emph{Localized Chebyshev Cross Approximation} (LoCCA). 

\subsection{Localized Chebyshev Nodes Selection (LoC-NS)}
First, the LoC-NS method searches the unstructured data sets to find the real physical points closest to these continuous reference nodes, as formalized in \autoref{alg: loc_nbds}. A representative geometric layout of the selected nodes is illustrated in \autoref{fig: domain configuration details}.

\begin{algorithm}[ht]
\caption{Localized Chebyshev Nodes Selection (LoC-NS)}
\label{alg: loc_nbds}
\begin{algorithmic}[1]
\State \textbf{Input:} Grids $X \subset \mclx$, $Y \subset \mcly$, Chebyshev grid size $P$, number of neighbors $l$
\State \textbf{Output:} Selected unique neighbor sets $\mathcal{N}_\mclx \subset X$ and $\mathcal{N}_\mcly \subset Y$ 

\State Construct Chebyshev nodes $\{\xi_i^{(\mclx)}\}_{i=1}^{P} \subset \mclx$ and $\{\xi_j^{(\mcly)}\}_{j=1}^{P} \subset \mcly$

\For{$i=1$ to $P$}
    \State Find $l$ nearest neighbors of $\xi_i^{(\mclx)}$ in $X$
    \State Find $l$ nearest neighbors of $\xi_i^{(\mcly)}$ in $Y$
\EndFor

\State Collect all selected neighbors and retain the unique ones to construct:
\[ \mathcal{N}_\mclx \subset X \quad \text{and} \quad \mathcal{N}_\mcly \subset Y \]

\State \Return $\mathcal{N}_\mclx, \mathcal{N}_\mcly$
\end{algorithmic}
\end{algorithm}

\begin{figure}[ht]
    \centering
    \includegraphics[width=0.7\linewidth]{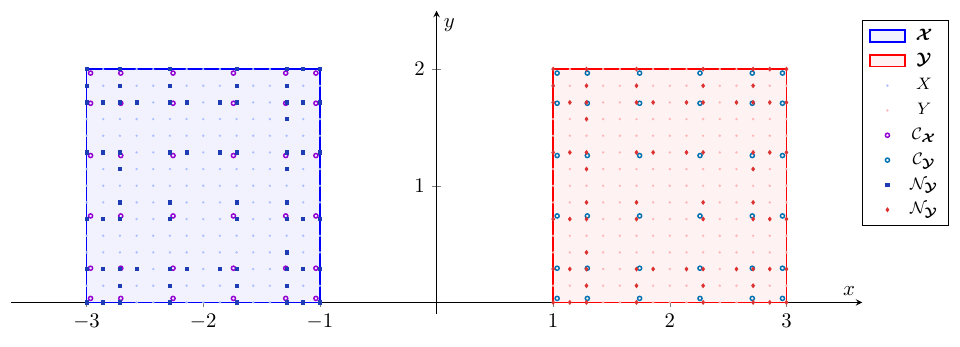}
    \caption{Geometric illustration of the LoC-NS procedure. The domains $\mclx=[-3,-1]\times[0,2]$ (blue) and $\mcly=[1,3]\times[0,2]$ (red) are discretized into the point sets $X$ and $Y$, respectively. Applying \autoref{alg: loc_nbds} maps the Chebyshev nodes to nearby physical points, yielding the filtered point sets $\mathcal{N}_{\mclx}$ and $\mathcal{N}_{\mcly}$. The example is shown for $N=15$, $P=6^2$, and $l=2$.}
    \label{fig: domain configuration details}
\end{figure}

\autoref{alg: loc_nbds} returns the filtered point sets $\mathcal{N}_{\mclx}$ and $\mathcal{N}_{\mcly}$, satisfying $n_x = \#(\mathcal{N}_{\mclx}) \le l P$ and $n_y = \#(\mathcal{N}_{\mcly}) \le lP$.  These reduced point sets preserve the geometric distribution of the reference Chebyshev nodes while remaining entirely within the original unstructured data, making them well-suited for the subsequent matrix skeletonization step.

\subsection{Localized Chebyshev Cross Approximation (LoCCA)}
Using the filtered sets $\mathcal{N}_{\mclx}$ and $\mathcal{N}_{\mcly}$, we build the matrix $\mathbf{K} = \mathcal{K}(\mathcal{N}_{\mclx}, \mathcal{N}_{\mcly}) \in \mathbb{R}^{n_x \times n_y}$. 
We apply the Rank-Revealing QR (RRQR) factorization to the low-rank skeletons, as formalized in \autoref{alg: LoCCA}.

\begin{algorithm}[ht]
\caption{Localized Chebyshev Cross Approximation (LoCCA)}
\label{alg: LoCCA}
\begin{algorithmic}[1]
\State \textbf{Input:} Grids $X \subset \mclx$, $Y \subset \mcly$, kernel $\mathcal{K}$, $P$,  $l$, tolerance $\varepsilon$
\State \textbf{Output:} Low-rank skeleton index sets $\widetilde{\mathcal{N}}_\mclx$ and $\widetilde{\mathcal{N}}_\mcly$

\State $[\mathcal{N}_\mclx, \mathcal{N}_\mcly] \leftarrow \text{\textbf{Call \autoref{alg: loc_nbds}}}(X, Y, P, l)$ 

\State Construct the cross matrix: 
\[ \mathbf{K} = \mathcal{K}(\mathcal{N}_\mclx, \mathcal{N}_\mcly) \]

\State Perform RRQR on $\mathbf{K}$ with tolerance $\varepsilon$ to get: $\widetilde{\mathcal{N}}_\mcly \subset \mathcal{N}_\mcly$
\vskip 0.15cm
\State Perform RRQR on $\mathbf{K}^\top$ with tolerance $\varepsilon$ to get: $\widetilde{\mathcal{N}}_\mclx \subset \mathcal{N}_\mclx$

\State \Return $$\mathcal{K}(X,Y) \approx \mathcal{K}(X,\widetilde{\mathcal{N}}_\mcly) \; \mathcal{K}(\widetilde{\mathcal{N}}_\mclx,\widetilde{\mathcal{N}}_\mcly)^{-1} \; \mathcal{K}(\widetilde{\mathcal{N}}_\mclx,Y)$$
\end{algorithmic}
\end{algorithm}

The numerical rank of the cross-approximation is given by $r = \max\{\#(\widetilde{\mathcal{N}}_{\mclx}), \#(\widetilde{\mathcal{N}}_{\mcly})\}$. Although in most of the experiments we noticed $\#(\widetilde{\mathcal{N}}_{\mclx}) = \#(\widetilde{\mathcal{N}}_{\mcly})$, there might be chances of size mismatches. To resolve this, the smaller index set is expanded to reduce the error in the approximation.


To establish the robustness of the LoCCA algorithmic framework, along with the theoretical results, we systematically conducted several numerical experiments over a wide range of kernel functions and different domains. We primarily focus on standard kernels of the form
\begin{equation}\label{equ: kernel list}
    \mclk(x,y) = \kappa(\|x - y\|_2), \; x \in \mclx, \; x \in \mcly, \quad \text{where, } \kappa(r) \in \left\{ \frac{1}{r}, \, \log r, \, \frac{\cos r}{r}, \, e^{-r^2}, \, \sqrt{1+r^2} \right\}.
\end{equation}
    

\section{Theoretical Analysis}\label{sec: Theoretical Analysis}

In this section, we present the mathematical foundation establishing the uniform stability, dimensional scaling, and convergence properties of the algorithm. We also analyze how the localized coordinate shifts executed in \autoref{alg: loc_nbds} propagate the low-rank approximation.

\subsection{Stability and Error Bounds under Perturbations}

We begin this section with one-dimensional results on polynomial interpolation on perturbed Chebyshev nodes, which we interpret as the mapping of ideal Chebyshev nodes to their nearby discrete physical neighbors. We now state the following result on uniform stability of the perturbed Lebesgue constant (defined in \eqref{equ: perturbed lebesgue constnt definition}).

\begin{theorem}\label{thm: unconditional bound}
Let the unperturbed first-kind Chebyshev nodes be given by \eqref{equ: cheb nodes} and let the perturbed nodes satisfy $|\tilde{\xi}_i - \xi_i| < \delta$ subject to the uniform threshold constraint \eqref{equ: delta strict threshold}. Then there exists a positive constant $C > 0$, independent of $p$, such that the perturbed Lebesgue constant satisfies
\begin{equation}
    \tilde{\Lambda}_p \le p^C \Lambda_p = \mathcal{O}(p^C \log p) \qquad \text{as } p \to \infty.
\end{equation}
\end{theorem}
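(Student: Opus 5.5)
The plan is to compare each perturbed basis polynomial $\tilde l_i$ with its unperturbed counterpart $l_i$ pointwise, through a ratio of products, and to show that the ratio is at most $p^C$ uniformly in $x$ and $i$. Summing over $i$ and maximizing over $x$ then gives $\tilde\Lambda_p \le p^C \Lambda_p$. The $\mathcal{O}(p^C\log p)$ conclusion follows from \eqref{equ: lebesgue constant bound}.

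\textbf{Splitting the ratio.} For fixed $i$ and $x\in[-1,1]$ we write
\begin{equation*}
\frac{|\tilde l_i(x)|}{|l_i(x)|}=\prod_{j\neq i}\frac{|x-\tilde\xi_j|}{|x-\xi_j|}\cdot\prod_{j\neq i}\frac{|\xi_i-\xi_j|}{|\tilde\xi_i-\tilde\xi_j|}.
\end{equation*}
Dividing by $l_i(x)$ is awkward when $x$ is near a node $\xi_j$. We therefore bound $|\tilde l_i(x)|$ directly: it equals $|l_i(x)|$ times the denominator product, times the numerator product written as $\prod_{j\ne i}|x-\tilde\xi_j|$ over $\prod_{j\neq i}|x-\xi_j|$. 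The denominator factors are easy. By \eqref{equ: delta strict threshold}, $|\tilde\xi_i-\tilde\xi_j|\ge|\xi_i-\xi_j|-2\delta$, so
\[
\frac{|\xi_i-\xi_j|}{|\tilde\xi_i-\tilde\xi_j|}\le\Bigl(1-\frac{2\delta}{|\xi_i-\xi_j|}\Bigr)^{-1}\le \exp\Bigl(\frac{4\delta}{|\xi_i-\xi_j|}\Bigr),
\]
using $2\delta/|\xi_i-\xi_j|\le 1/2$ and $-\log(1-t)\le 2t$ on $[0,1/2]$. Taking the product and applying \autoref{lma: chebyshev-harmonic} together with the scaling $\delta p^2\le C_2$ of \autoref{rmk: threshold-log-bound} bounds the denominator product by $\exp(4C_HC_2\log p)=p^{4C_HC_2}$.

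\textbf{Numerator product.} Let $\xi_k$ be the node closest to $x$. For $j\neq k,i$ we use $|x-\tilde\xi_j|\le|x-\xi_j|+\delta$, which gives the factor $\exp\bigl(\sum_{j\ne k}\delta/|x-\xi_j|\bigr)\le p^{C_0}$ by \autoref{rmk: threshold-log-bound}, which itself rests on \autoref{lma: restricted-sum}.

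\textbf{Main obstacle.} The single factor $j=k$ (when $k\ne i$) is the hard step, because $|x-\tilde\xi_k|/|x-\xi_k|$ is unbounded as $x\to\xi_k$. I would avoid the ratio for this factor. Instead, I would pair $|x-\tilde\xi_k|$ with $|l_i(x)|$ and use $|x-\tilde\xi_k|\le |x-\xi_k|+\delta$. Write $|l_i(x)|=|x-\xi_k|\,|q(x)|$, where $q$ collects the remaining factors. Then
\[
|x-\tilde\xi_k|\,|q(x)|\le |l_i(x)|+\delta|q(x)|.
\]
To control $\delta|q(x)|$, note that for $x$ between neighbouring nodes one has $|q(x)|\lesssim |l_i(\xi_k\pm m/2)|/(m/2)$ up to an $\mathcal{O}(1)$ multiplicative factor. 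This is obtained by comparing the products $\prod_{j\ne i,k}|x-\xi_j|$ at nearby points, and that comparison is again a harmonic-sum estimate of size $\exp(\mathcal{O}(m\cdot p^2\log p))$. I expect this part to need care; a cleaner route may be to bound it by $\mathcal{O}(p^2)$ times $|l_i(x')|$ at a point $x'$ at distance $\ge m/2$ from $\xi_k$. Since $\delta\le m/4$, the term $\delta|q(x)|$ is then at most a constant times $\max_{x'}|l_i(x')|$ on a nearby point, and this is absorbed into $\Lambda_p$ after summation. The losses are all polynomial in $p$, so all constants combine into a single exponent $C$ independent of $p$, giving $\tilde\Lambda_p\le p^C\Lambda_p$.
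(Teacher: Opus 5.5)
Your plan is correct, and its skeleton is the paper's: the same factorization of $\tilde l_i$, the same $p^{\mathcal{O}(1)}$ bound on the denominator product via $\delta p^2\le C_2$ and the harmonic-sum lemma, and the same treatment of the numerator factors $j\ne i,k$. You differ at the near-node factor $j=k$.

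The paper splits into three cases. It handles $0<|x-\xi_k|<\delta$ by the mean value theorem $l_i(x)=l_i'(\eta)(x-\xi_k)$ plus Markov's inequality $\|l_i'\|_\infty\le p^2\Lambda_p$, and treats $x=\xi_k$ separately with the explicit Chebyshev weight ratio $|\omega'(\xi_k)/\omega'(\xi_i)|\le p$. Your additive split $|x-\tilde\xi_k|\,|q_i(x)|\le|l_i(x)|+\delta|q_i(x)|$, with $q_i=l_i/(x-\xi_k)$ a polynomial, covers all three cases at once. Your comparison point then replaces Markov by one more harmonic-sum estimate.

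Make the case split explicit, though. For $|x-\xi_k|\ge\delta$ you simply have $\delta|q_i(x)|\le|l_i(x)|$, so the comparison is needed only for $|x-\xi_k|<\delta$, with $x'=\xi_k\pm m/2$ chosen in $[-1,1]$. There $|x-x'|\le 3m/4$ and $|x'-\xi_j|\ge\tfrac12|\xi_k-\xi_j|$ give $|q_i(x)|\le \exp\bigl(\tfrac32 mC_Hp^2\log p\bigr)|q_i(x')|\le p^{3\pi^2C_H/2}\,|l_i(x')|/(m/2)$. This is a polynomial factor, not $\mathcal{O}(1)$. The restriction matters: for $x$ at distance of order $1/p$ from $\xi_k$, the crude lemma would give a super-polynomial factor. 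Because $x'$ depends only on $k$, $\sum_{i\ne k}\delta|q_i(x)|\le\tfrac12p^{3\pi^2C_H/2}\Lambda_p$, which is exactly what ``absorbed into $\Lambda_p$'' requires.

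Your alternative ``$\mathcal{O}(p^2)$ times $|l_i(x')|$'' is the paper's Markov route. There the relevant point is the maximizer of $|l_i|$, which depends on $i$, so summation costs an extra factor $p$ ($\delta p^3\Lambda_p\le C_2p\Lambda_p$) instead of being absorbed. Both routes give $\tilde\Lambda_p\le p^C\Lambda_p$. Markov is a one-line black box; your comparison avoids the extra factor $p$ and the paper's separate analysis of $x=\xi_k$.
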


\begin{proof}
Express the perturbed nodes as $\tilde{\xi}_i = \xi_i + \epsilon_i$ with $|\epsilon_i| < \delta \le m/4$. Fix $x \in [-1,1]$ and let $\xi_k$ be the Chebyshev node nearest to $x$. By the triangle inequality and the separation condition \eqref{equ: delta strict threshold}, we have
\begin{equation}\label{equ: separation delta lower}
    |\tilde{\xi}_i - \tilde{\xi}_j| \ge |\xi_i - \xi_j| - 2\delta \ge \frac{1}{2}|\xi_i - \xi_j|, \qquad \text{for } i \neq j.
\end{equation}
We decompose each fundamental perturbed Lagrange basis polynomial $\tilde{l}_i(x)$ as follows
\begin{equation}\label{equ: basis rational decomposition}
    \tilde{l}_i(x) = l_i(x) \cdot \prod_{j \neq i} \frac{x - \tilde{\xi}_j}{x - \xi_j} \cdot \prod_{j \neq i} \frac{\xi_i - \xi_j}{\tilde{\xi}_i - \tilde{\xi}_j}, \quad \text{ provided } x\neq \xi_k.
\end{equation}
Using the inequality $\log(1+t) \le t$ for $t \ge 0$, combined with \eqref{equ: separation delta lower} and \autoref{lma: chebyshev-harmonic}, we get
\begin{equation}
    \log \prod_{j \neq i} \frac{|\xi_i - \xi_j|}{|\tilde{\xi}_i - \tilde{\xi}_j|} \le \sum_{j \neq i} \frac{|\epsilon_i - \epsilon_j|}{|\tilde\xi_i - \tilde\xi_j|} \le \sum_{j \neq i} \frac{4\delta}{|\xi_i - \xi_j|} \le 4 C_H \delta p^2 \log p.
\end{equation}
Since $\delta p^2 \le C_2$, we have the denominator quotient of $\tilde{l}_i(x)$ strictly bounded by a polynomial factor
\begin{equation}\label{equ: den quotient bound}
    \prod_{j \neq i} \frac{\xi_i - \xi_j}{\tilde{\xi}_i - \tilde{\xi}_j} \le e^{4 C_H C_2 \log p} = p^{C_1}.
\end{equation}

Now we evaluate the numerator-tracking component of $\tilde{l}_i(x)$. The following three cases we need to show, then we are done. 

\begin{description}
    \item[Case I: ] $ |x - \xi_k|\ge \delta $. \\ For each index $j \neq i, k$, we write $|x - \tilde{\xi}_j| \le |x - \xi_j| + \delta = |x - \xi_j| \left(1 + \frac{\delta}{|x - \xi_j|}\right)$.
    Isolating the localized nearest-node factor $k$, we take the product for $j \neq i$ as 
    \begin{gather}
        \prod_{j \neq i} |x - \tilde{\xi}_j| \le \left( \prod_{j \neq i} |x - \xi_j| \right) \left( 1 + \frac{\delta}{|x - \xi_k|} \right) \prod_{j \neq i, k} \left( 1 + \frac{\delta}{|x - \xi_j|} \right) 
    \end{gather}
    Using $1+t \le e^t$ alongside the bound established in \autoref{rmk: threshold-log-bound}, yeilds
    \begin{equation}\label{equ: 1 + delta by (x - xi j) bound}
        \prod_{j \neq i,k} \frac{|x - \tilde{\xi}_j|}{|x - \xi_j|}\leq \prod_{j \neq i, k} \left(1 + \frac{\delta}{|x - \xi_j|}\right) \le \exp\left( \delta \sum_{j \neq i, k} \frac{1}{|x - \xi_j|} \right) \le e^{C_0 \log p} = p^{C_0}.
    \end{equation}
    For the index $j = k$, the condition $|x - \xi_k|\ge \delta $ ensures that $\left( 1 + \frac{\delta}{|x - \xi_k|} \right)\leq 2$. Using the above bounds, from \eqref{equ: basis rational decomposition} we get
    \begin{equation}
        |\tilde{l}_i(x)| \le 2 \cdot p^{C_1} \cdot p^{C_0} \cdot |l_i(x)| = 2 p^C |l_i(x)|.
    \end{equation}
    Summing over all components $i = 1, \dots, p$ and maximizing over the evaluation domain $x \in [-1,1]$ gives
    \begin{equation}
        \tilde{\Lambda}_p \le C_3 p^C \Lambda_p.
    \end{equation}
    Now, by \eqref{equ: lebesgue constant bound}, as  $\Lambda_p = \mclo{\log p}$ ensures that $\tilde{\Lambda}_p = \mathcal{O}(p^C \log p)$.

    \item[Case II:] $0 < |x - \xi_k| < \delta $. \\
    We evaluate the components relative to the nearest unperturbed node $\xi_k$.

    \textit{Sub-case A: $i = k$}. In this case, the term $j = k$ is naturally omitted from the product in \eqref{equ: basis rational decomposition}. The remaining non-local terms are handled similarly to Case I, yielding
    \begin{equation}
        |\tilde{l}_k(x)| \le p^{C_1 + C_0} |l_k(x)|.
    \end{equation}

    \textit{Sub-case B: $i \neq k$}. In this case, rearranging the product \eqref{equ: basis rational decomposition} and using \eqref{equ: den quotient bound} and \eqref{equ: 1 + delta by (x - xi j) bound}, we get
    \begin{equation}\label{equ: case 2 rational direct isolating}
        |\tilde{l}_i(x)| \le p^{C_1 + C_0} \left| l_i(x) \cdot \frac{x - \tilde{\xi}_k}{x - \xi_k} \right|.
    \end{equation}
    Applying the Mean Value Theorem directly to $l_i(x)$ on the localized interval between $x$ and $\xi_k$, we obtain $l_i(x) = l_i'(\eta_i)(x - \xi_k),$ where $\eta_i$ lies strictly between $x$ and $\xi_k$. Substituting this into \eqref{equ: case 2 rational direct isolating} 
    and using Markov's polynomial inequality on the standard baseline derivative, yielding $\|l_i'\|_\linf \le p^2 \|l_i\|_\linf \le p^2 \Lambda_p$ \cite[Chapter 4, \S 1)]{devore1993constructive}. This simplifies the basis bound to $|\tilde{l}_i(x)| \le p^{C_1 + C_0} \left( 2 \delta p^2 \Lambda_p \right)$.
    
    Now summing over all components $i = 1,\dotsc,p$, and maximizing over $x \in [-1,1]$ yields
    \begin{equation}
        \tilde{\Lambda}_p
        \le p^{C_1 + C_0} \left( \Lambda_p + 2 \delta p^3 \Lambda_p \right).
    \end{equation}
    Utilizing the condition $\delta p^2 \le C_2$, the right-hand side of the above simplifies to $p^C \Lambda_p$, establishing a clean polynomial-logarithmic growth of $\mathcal{O}(p^C \log p)$.

    \item[Case III:] $|x - \xi_k| = 0$, i.e., $x = \xi_k$. \\
    At $x = \xi_k$, the perturbed Lagrange basis polynomial is $\tilde{l}_i(\xi_k) = \prod_{j \neq i} \frac{\xi_k - \tilde{\xi}_j}{\tilde{\xi}_i - \tilde{\xi}_j}$. To exploit our established bounds, we rewrite the basis polynomial as follows
    \begin{equation}\label{equ: case 3 factored structure tight}
        |\tilde{l}_i(\xi_k)| =  \prod_{j \neq i} \frac{|\xi_i - \xi_j|}{|\tilde{\xi}_i - \tilde{\xi}_j|}  \times \prod_{j \neq i} \frac{|\xi_k - \tilde{\xi}_j|}{|\xi_i - \xi_j|}.
    \end{equation}
    The first product is uniformly bounded as given by \eqref{equ: den quotient bound}, and to analyze the second product, we consider the following two cases for the index $i$.

    \textit{Sub-case A: $i = k$}. In this case, the second product can be rewritten as 
    \begin{equation}
        \prod_{j \neq k} \frac{|\xi_k - \tilde{\xi}_j|}{|\xi_k - \xi_j|} \le \prod_{j \neq k} \frac{|\xi_k - \xi_j| + \delta}{|\xi_k - \xi_j|} = \prod_{j \neq k} \left(1 + \frac{\delta}{|\xi_k - \xi_j|}\right).
    \end{equation}
    Applying $1+t \le e^t$ along with \autoref{lma: chebyshev-harmonic}, we get
    \begin{equation} \label{equ | 1 + dela by mod ( xi k - xi j) bound}
        \prod_{j \neq k} \left(1 + \frac{\delta}{|\xi_k - \xi_j|}\right) \le \exp\left(\delta \sum_{j \neq k} \frac{1}{|\xi_k - \xi_j|}\right) \le e^{C_H \delta p^2 \log p} \le p^{C_H C_2} = p^{C_4}.
    \end{equation}
    Combining these components back into \eqref{equ: case 3 factored structure tight}, we have $|\tilde{l}_k(\xi_k)| \le p^{C_1 + C_4}$.

    \textit{Sub-case B: $i \neq k$}. In this case, the index $j = k$ appears inside the product, and isolating it, we get
    \begin{equation}
        \prod_{j \neq i} \frac{| \xi_k - \tilde{\xi}_j |}{| \xi_i - \xi_j |} =  \frac{| \xi_k - \tilde{\xi}_k |}{| \xi_i - \xi_k |} \times \prod_{j \neq i, k} \frac{| \xi_k - \tilde{\xi}_j |}{| \xi_i - \xi_j |}.
    \end{equation}
    Given $| \xi_k - \tilde{\xi}_k | = | \epsilon_k | < \delta$, the isolated term reduces to $\frac{\delta}{| \xi_i - \xi_k |}$, and the product can be rewritten as
    \begin{equation} \label{equ: | xi k - tilde xi j | product}
        \prod_{j \neq i, k} \frac{| \xi_k - \tilde{\xi}_j |}{| \xi_i - \xi_j |} =  \prod_{j \neq i, k} \frac{| \xi_k - \xi_j |}{| \xi_i - \xi_j |} \times \prod_{j \neq i, k} \frac{| \xi_k - \tilde{\xi}_j |}{| \xi_k - \xi_j |}.
    \end{equation}
    If $\omega_{\boldsymbol{\xi}}(x) = \prod_{m=1}^p (x - \xi_m)$, then from \autoref{rmk: cheb derivatives}, we have
    \begin{equation}
        \prod_{j \neq i, k} \frac{| \xi_k - \xi_j |}{| \xi_i - \xi_j |} = \frac{|\omega_{\boldsymbol{\xi}}'(\xi_k)| / |\xi_k - \xi_i|}{|\omega_{\boldsymbol{\xi}}'(\xi_i)| / |\xi_i - \xi_k|} = \frac{|\omega_{\boldsymbol{\xi}}'(\xi_k)|}{|\omega_{\boldsymbol{\xi}}'(\xi_i)|} = \frac{\sqrt{1-\xi_i^2}}{\sqrt{1-\xi_k^2}} \leq p.
    \end{equation}

    By \eqref{equ | 1 + dela by mod ( xi k - xi j) bound}, the second product term of \eqref{equ: | xi k - tilde xi j | product} is bounded by $p^{C_4}$. 
 
    Now, for $i \neq k$, from \eqref{equ: case 3 factored structure tight} we get
    \begin{equation}
        |\tilde{l}_i(\xi_k)| \le p^{C_1} \cdot \frac{\delta}{| \xi_i - \xi_k |} \cdot p \cdot p^{C_4} = p^{C_1 + C_4 + 1} \frac{\delta}{| \xi_i - \xi_k |}.
    \end{equation}
    Summing over $i = 1 : p $ with $i \neq k$ and using \autoref{lma: chebyshev-harmonic} we get
    \begin{equation}
        \sum_{i \neq k} |\tilde{l}_i(\xi_k)| \le p^{C_1 + C_4 + 1} \left( \delta \sum_{i \neq k} \frac{1}{| \xi_i - \xi_k |} \right) \le C_H C_2 p^{C_1 + C_4 + 1} \log p.
    \end{equation}
    Combining the results for both Sub-cases A and B proves that $\sum_{i=1}^p |\tilde{l}_i(\xi_k)| = \mathcal{O}(p^C \log p)$. 
\end{description}

This completes the proof.
\end{proof}

\begin{myremark}\label{rmk: sharp_lebesgue_conjecture}
    The bound $\tilde{\Lambda}_p \le p^C \Lambda_p$ established in \autoref{thm: unconditional bound} provides a rigorous, a priori safeguard against catastrophic operator growth. However, based on our empirical observations, we conjecture that the factor $p^C$ is an artifact of the global estimate in the proof and that the true perturbed operator norm satisfies
    \begin{equation}
        \tilde{\Lambda}_p = \Lambda_p + \mathcal{O}(\delta) = \mathcal{O}(\log p + \delta) \qquad \text{as } p \to \infty.
    \end{equation}
    Proving this tight logarithmic bound remains an open problem, though \autoref{thm: unconditional bound} remains sufficient for all global matrix convergence guarantees derived in this work.
\end{myremark}

However, having established an a priori bound on the perturbed operator norm, we next analyze how the perturbation affects the fundamental Lagrange basis polynomials. Bounding the pointwise deviation between $l_i(x)$ and $\tilde{l}_i(x)$ provides the core technical estimate required in \autoref{thm: Stability under node perturbations}.

\begin{mylma}[Stability of Lagrange Basis Polynomials under Nodal Perturbation]
\label{lma: basis_stability}
Let $l_i(x)$ and $\tilde{l}_i(x)$ denote the $i$-th fundamental Lagrange basis polynomials associated with $\{\xi_i\}_{i=1}^p$ and $\{\tilde\xi_i\}_{i=1}^p$, respectively satisfying $|\tilde{\xi}_j - \xi_j| < \delta$ for all $j = 1, \dots, p$, and $\delta$ be uniform threshold constraint \eqref{equ: delta strict threshold}. Then, there exists a constant $C_p > 0$ growing at most polynomial-logarithmically with $p$, such that
\[
    \| l_i - \tilde{l}_i \|_\linf \le C_p \, \delta \, \Lambda_p,
\]
where $\Lambda_p$ is the Lebesgue constant of the unperturbed Chebyshev grid.
\end{mylma}

\begin{proof}
Let $\mathbf{t} = [t_1, t_2, \dotsc, t_p]^T \in [-1,1]^p$ represents a node vector parameterizing the $i$-th basis polynomial $l_i(x; \mathbf{t}) = \prod_{j \neq i} \frac{x - t_j}{t_i - t_j}$. Applying the Multivariate Mean Value Theorem for real-valued functions \cite[Theorem 12.9]{apostol1974mathematical} to $l_i(x; \mathbf{t})$ we have
\begin{equation} \label{eq:mvt_basis_single}
l_i(x) - \tilde{l}_i(x) = \nabla l_i(x; \boldsymbol{\eta}) \cdot (\boldsymbol{\xi} - \tilde{\boldsymbol{\xi}}) = \sum_{k=1}^p \frac{\partial l_i}{\partial t_k}(x; \boldsymbol{\eta}) (\xi_k - \tilde{\xi}_k),
\end{equation}
where $\boldsymbol{\eta} = (1-t)\boldsymbol{\xi} + t\tilde{\boldsymbol{\xi}}$ for some $t \in [0,1]$. 
Note that for any index $j$, the coordinate distance satisfies $|\eta_j - \xi_j| = t|\tilde{\xi}_j - \xi_j| < \delta$. Under the threshold condition \eqref{equ: delta strict threshold} $\delta \le m/4$, the intermediate node separation distance is as given below using the reverse triangle inequality
\begin{equation}\label{equ: eta node separation}
    |\eta_k - \eta_l| \ge |\xi_k - \xi_l| - |\eta_k - \xi_k| - |\xi_l - \eta_l| > |\xi_k - \xi_l| - 2\delta \ge |\xi_k - \xi_l| - \frac{m}{2}.
\end{equation}
Since $|\xi_k - \xi_l| \ge m$, the intermediate configuration $\boldsymbol{\eta}$ consists of distinct nodes and satisfies $|\eta_k - \eta_l| > \frac{1}{2}|\xi_k - \xi_l| > 0$. 
Now, the parametric partial derivatives $\frac{\partial l_i}{\partial t_k}(x; \boldsymbol{\eta})$  for all $k = 1, \dots, p$ is as follows

\begin{equation}\label{equ: partial derivatives all cases}
    \frac{\partial l_i}{\partial t_k}(x; \boldsymbol{\eta}) = 
    \begin{cases}
    l_i(x;\boldsymbol{\eta}) \displaystyle\sum_{j \neq i} \frac{1}{\eta_j - \eta_i} & k = i, \\
    l_k(x; \boldsymbol{\eta})\dfrac{1}{\eta_i - \eta_k} \dfrac{\omega'_{\boldsymbol{\eta}}(\eta_k)}{\omega'_{\boldsymbol{\eta}}(\eta_i)} & k \neq i, \quad \text{ where }\omega_{\boldsymbol{\eta}}(x) = \prod (x - \eta_j). 
    \end{cases}
    \end{equation}

Taking the absolute value of the total MVT expansion \eqref{eq:mvt_basis_single} and factoring out the uniform proximity bound $|\xi_k - \tilde{\xi}_k| < \delta$, we apply the triangle inequality and substitute the derivative \eqref{equ: partial derivatives all cases} we have
\begin{equation} \label{equ: sum_deriv_bounds_linearized}
|l_i(x) - \tilde{l}_i(x)| \le \delta \left( |l_i(x; \boldsymbol{\eta})| \sum_{j \neq i} \frac{1}{|\eta_j - \eta_i|} + \sum_{k \neq i} \frac{1}{|\eta_i - \eta_k|} \left| \frac{\omega'_{\boldsymbol{\eta}}(\eta_k)}{\omega'_{\boldsymbol{\eta}}(\eta_i)} \right| |l_k(x; \boldsymbol{\eta})| \right).
\end{equation}

For any node configuration satisfying the tight threshold constraints around the first-kind Chebyshev grid, the ratio of the polynomial derivative weights balances smoothly, scaling at a controlled polynomial rate such that $\left| \frac{\omega'_{\boldsymbol{\eta}}(\eta_k)}{\omega'_{\boldsymbol{\eta}}(\eta_i)} \right| \le C_0 p^\alpha$ for a uniform constant $\alpha > 0$ by \autoref{lma: weight-ratio-bound}. As the perturbed Lebesgue constant $\tilde{\Lambda}_p(\boldsymbol{\eta}) =\sup \sum_{m=1}^p |l_m(x; \boldsymbol{\eta})| $, then using $|\eta_i - \eta_k| > \frac{1}{2}|\xi_i - \xi_k|$ from \eqref{equ: sum_deriv_bounds_linearized}, we have
\[
|l_i(x) - \tilde{l}_i(x)| \le 2\delta (1+C_0p^\alpha) \tilde\Lambda_p(\boldsymbol{\eta}) \sum_{j \neq i} \frac{1}{|\xi_j - \xi_i|}  
\]
Now applying \autoref{lma: chebyshev-harmonic} and \autoref{thm: unconditional bound}, we have $\| l_i - \tilde l_i \|_{L^\infty} \leq \delta  C_H p^{C + 2} (1 + C_0p^\alpha) \log p \Lambda_p$. Now, setting $C_p = C_H p^{C + 2} (1 + C_0p^\alpha) \log p$, completes the proof.
\end{proof}

By using the \autoref{lma: basis_stability}, we now present the following theorem, which bounds the uniform deviation between the unperturbed and perturbed Lagrange interpolants for continuously differentiable functions.

\begin{theorem}[Stability of Chebyshev Interpolation under Node Perturbation] 
\label{thm: Stability under node perturbations}
Let $f \in C^1[-1,1]$ and let $\{\xi_i\}_{i=1}^p$ be the Chebyshev nodes defined by \eqref{equ: cheb nodes}. Let $\{\tilde{\xi}_i\}_{i=1}^p \subset [-1,1]$ be a set of perturbed nodes satisfying the proximity condition $|\tilde{\xi}_i - \xi_i| < \delta$ for all $i = 1, \dots, p$, subject to the uniform threshold constraint \eqref{equ: delta strict threshold}. 
If $I_p f$ and $\tilde{I}_p f$ denote the Lagrange interpolants of $f$ at the node configurations $\{\xi_i\}_{i=1}^p$ and $\{\tilde{\xi}_i\}_{i=1}^p$ respectively, then there exists a constant $\widehat{C}_p > 0$, growing at most polynomial-logarithmically with respect to $p$, such that
\begin{equation}
    \| I_p f - \tilde{I}_p f \|_\linf \le \widehat{C}_p \,\delta \, \Lambda_p \|f\|_{C^1[-1,1]},
\end{equation}
where $\Lambda_p$ is the Lebesgue constant of the Chebyshev grid.
\end{theorem}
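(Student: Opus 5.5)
The plan is to reduce the theorem to \autoref{lma: basis_stability} by writing both interpolants in the Lagrange form and comparing them node by node. We have
\[
I_p f(x) - \tilde{I}_p f(x) = \sum_{i=1}^p f(\xi_i)\, l_i(x) - \sum_{i=1}^p f(\tilde{\xi}_i)\, \tilde{l}_i(x).
\]
Adding and subtracting $\sum_i f(\xi_i)\tilde{l}_i(x)$ splits this into two terms:
\[
\underbrace{\sum_{i=1}^p f(\xi_i)\bigl(l_i(x) - \tilde{l}_i(x)\bigr)}_{T_1(x)} \;+\; \underbrace{\sum_{i=1}^p \bigl(f(\xi_i) - f(\tilde{\xi}_i)\bigr)\tilde{l}_i(x)}_{T_2(x)} .
\]
$T_1$ measures how far the basis moves when $f$ is held at the original nodes. $T_2$ measures the change in the sampled data, which the perturbed operator then propagates.

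\textbf{Bounding $T_1$.} By the triangle inequality and \autoref{lma: basis_stability},
\[
|T_1(x)| \le \|f\|_\linf \sum_{i=1}^p \|l_i - \tilde{l}_i\|_\linf \le p\, C_p\, \delta\, \Lambda_p \|f\|_\linf .
\]
Since $C_p$ grows at most polynomial-logarithmically in $p$, so does $p\,C_p$.

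\textbf{Bounding $T_2$.} By the mean value theorem, $|f(\xi_i) - f(\tilde{\xi}_i)| \le \|f'\|_\linf\, |\xi_i - \tilde{\xi}_i| < \delta \|f'\|_\linf$. This gives
\[
|T_2(x)| \le \delta \|f'\|_\linf \sum_{i=1}^p |\tilde{l}_i(x)| \le \delta \|f'\|_\linf \tilde{\Lambda}_p \le \delta\, p^C \Lambda_p \|f'\|_\linf ,
\]
where the last step uses \autoref{thm: unconditional bound}.

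\textbf{Combining.} Using $\|f\|_\linf + \|f'\|_\linf = \|f\|_{C^1[-1,1]}$, both terms are bounded by
\[
\max\{p\,C_p,\ p^C\}\, \delta\, \Lambda_p \|f\|_{C^1[-1,1]} .
\]
We set $\widehat{C}_p := \max\{p\,C_p, p^C\}$. This is polynomial-logarithmic in $p$ because each of the two ingredients is.

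\textbf{Main obstacle.} The step to watch is the bookkeeping in $T_1$. The sum over $i$ costs an extra factor of $p$, and one must check that this does not destroy the claimed polynomial-logarithmic growth. It does not, since $p$ times a polynomial-logarithmic quantity is still polynomial-logarithmic, but it makes $\widehat{C}_p$ visibly larger than $C_p$.

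\textbf{Alternative for $T_1$.} If a tighter constant were wanted, I would avoid summing $\|l_i - \tilde{l}_i\|_\linf$ over $i$. Instead I would return to the multivariate mean value representation in the proof of \autoref{lma: basis_stability} and apply it directly to the map $\mathbf{t} \mapsto \sum_i f(\xi_i)\, l_i(x;\mathbf{t})$. That would bound $\sum_i |\partial_{t_k} \cdot|$ jointly rather than term by term. For the statement as posed, however, the crude summation suffices.
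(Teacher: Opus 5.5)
Your proof is correct and is essentially the paper's argument: both write the interpolants in Lagrange form and split the difference into a data-perturbation term, handled by the mean value theorem on $f$, and a basis-perturbation term, handled by \autoref{lma: basis_stability} summed over the $p$ nodes, which yields a constant of the same order as the paper's $\widehat{C}_p = 1 + pC_p$. The only difference is the cross term: the paper adds and subtracts $\sum_j f(\tilde{\xi}_j)\, l_j(x)$, which pairs the data term with the unperturbed basis and bounds it by $\delta\Lambda_p\|f\|_{C^1[-1,1]}$ without invoking \autoref{thm: unconditional bound}, whereas your pairing with $\tilde{l}_i$ costs an extra factor $p^C$ from $\tilde{\Lambda}_p$ (harmless, since $C_p$ already contains $p^{C}$).
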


\begin{proof}
Expressing the interpolants in terms of their respective fundamental Lagrange basis polynomials $l_j(x)$ and $\tilde{l}_j(x)$, and applying the triangle inequality yields
\begin{equation}\label{equ: decomposed interpolation error}
    \| I_p f - \tilde{I}_p f \|_\linf \leq \Big \| \sum_{j=1}^p \big[ f(\xi_j) - f(\tilde{\xi}_j) \big] l_j(x) \Big \|_\linf + \Big\| \sum_{j=1}^p \big[ l_j(x) - \tilde{l}_j(x) \big] f(\tilde{\xi}_j) \Big \|_\linf.
\end{equation}

For the first term, applying the Mean Value Theorem to $f$ gives $|f(\xi_j) - f(\tilde{\xi}_j)| \le \delta \|f'\|_\linf \le \delta \|f\|_{C^1[-1,1]}$ for each $j=1,\dots,p$. Now, factoring this uniform scalar bound out of the summation gives
\begin{equation}\label{equ: 1st sum value}
    \Big \| \sum_{j=1}^p \big[ f(\xi_j) - f(\tilde{\xi}_j) \big] l_j(x) \Big \|_\linf \le \delta \|f\|_{C^1[-1,1]} \sup_{x \in [-1,1]} \sum_{j=1}^p |l_j(x)| = \delta \Lambda_p \|f\|_{C^1[-1,1]}.
\end{equation}

For the second term, we bound the function values uniformly via $|f(\tilde{\xi}_j)| \le \|f\|_\linf \le \|f\|_{C^1[-1,1]}$ and from the uniform basis stability result \autoref{lma: basis_stability}, we have $\| l_j - \tilde{l}_j \|_\linf \le C_p \delta \Lambda_p$, where $C_p$ grows polynomial-logarithmically with $p$. Summing over all $p$ components, we get
\begin{equation}\label{equ: 2nd sum value}
    \Big\| \sum_{j=1}^p \big[ l_j(x) - \tilde{l}_j(x) \big] f(\tilde{\xi}_j) \Big \|_\linf \le \sum_{j=1}^p \| l_j - \tilde{l}_j \|_\linf |f(\tilde{\xi}_j)| \le p C_p \delta \Lambda_p \|f\|_{C^1[-1,1]}.
\end{equation}

Combining the bounds \eqref{equ: 1st sum value} and \eqref{equ: 2nd sum value} into \eqref{equ: decomposed interpolation error} and simplifying yields $\| I_p f - \tilde{I}_p f \|_\linf \le (1 + p C_p) $ $ \delta \Lambda_p \|f\|_{C^1[-1,1]}$. Setting $\widehat {C}_p = 1 + p C_p$ completes the proof, preserving the polynomial-logarithmic growth.
\end{proof}

\subsection{Extension to Multivariate Domains via Tensor-Product Decompositions}

While the above section establishes the stability and error bounds of Chebyshev interpolation under node perturbations in a one-dimensional setting, the proposed LoCCA framework operates on functions defined over multivariate domains. Exploiting the tensor-product structure of multivariate Chebyshev interpolation, we extend the one-dimensional stability result to the hypercube $\Omega = [-1,1]^d$ by considering the interpolation operators acting along each coordinate direction separately.

Let $\mathbf{i} = (i_1,\ldots,i_d) \in \{1,\ldots,p\}^d$ be a multi-index, and the corresponding unperturbed and perturbed tensor-product Chebyshev nodes are defined by $\boldsymbol{\xi}_{\mathbf{i}} = (\xi_{i_1},\ldots,\xi_{i_d}), $ and $  \tilde{\boldsymbol{\xi}}_{\mathbf{i}} = (\tilde{\xi}_{i_1},\ldots,\tilde{\xi}_{i_d}) $, where each coordinate axis satisfies the proximity condition $|\tilde{\xi}_{i_k}-\xi_{i_k}| < \delta,$ where $\delta$ satisfies \eqref{equ: delta strict threshold}. The associated multivariate interpolation operators admit the directional tensor-product representations
\begin{equation}\label{equ: d-dim'al interpolation operators}
    \mathbf{I}_{\mathbf{p}} = \bigotimes_{k=1}^{d} I_p^{(k)}, \qquad \tilde{\mathbf{I}}_{\mathbf{p}} = \bigotimes_{k=1}^{d} \tilde{I}_p^{(k)},
\end{equation}
where $I_p^{(k)}$ and $\tilde{I}_p^{(k)}$ denote the univariate interpolation operators acting along the $k$-th coordinate direction. To compare these multivariate operators, we utilize the following identity
\begin{equation} \label{equ: telescopic operator expansion}
    \mathbf{I}_{\mathbf{p}} - \tilde{\mathbf{I}}_{\mathbf{p}} = \sum_{k=1}^{d} \left( \tilde{I}_p^{(1)} \otimes \cdots \otimes \tilde{I}_p^{(k-1)} \otimes (I_p^{(k)}-\tilde{I}_p^{(k)}) \otimes I_p^{(k+1)} \otimes \cdots \otimes I_p^{(d)} \right).
\end{equation}
Now, applying \eqref{equ: telescopic operator expansion} to a general function $f\in C^{1}(\Omega)$ and using the triangle inequality yields 
\begin{equation}\label{equ: master error split sum}
\| \mathbf{I}_{\mathbf{p}}f - \tilde{\mathbf{I}}_{\mathbf{p}}f \|_{L^\infty(\Omega)} \le \sum_{k=1}^{d} \| T_k f \|_{L^\infty(\Omega)},
\end{equation}
where $T_k$ is defined by $T_k = \tilde{I}_p^{(1)} \otimes \cdots \otimes \tilde{I}_p^{(k-1)} \otimes (I_p^{(k)}-\tilde{I}_p^{(k)}) \otimes I_p^{(k+1)} \otimes \cdots \otimes I_p^{(d)}$. It is therefore sufficient to estimate the norm of each summand component separately. Now, to evaluate the action of $T_k$, we analyze the composition from the inside out by defining an intermediate function representing the action of the innermost unperturbed coordinate operators
\[
g := I_p^{(k+1)} \cdots I_p^{(d)} (f) .
\]
Since the operators $I_p^{(k+1)},\ldots,I_p^{(d)}$ act only on coordinates different from $x_k$, they commute with differentiation in the $x_k$-direction. Hence, the partial derivative satisfies
\[
\frac{\partial g}{\partial x_k} = I_p^{(k+1)}  \cdots I_p^{(d)} \left( \frac{\partial f}{\partial x_k} \right).
\]
Under the proximity constraint $\delta \le m/4$, the $L^\infty[-1,1]$-induced operator norms of the unperturbed and perturbed directional operators are bounded by their respective Lebesgue constants, namely $\| I_p^{(j)} \|_{\infty} = \Lambda_p$ and $\| \tilde{I}_p^{(j)} \|_{\infty} = \tilde{\Lambda}_p$. Applying these induced operator norm estimates successively to both $g$ and its derivative gives
\[
\|g\|_{L^\infty(\Omega)} \le \left( \prod_{j=k+1}^{d} \|I_p^{(j)}\| \right) \|f\|_{L^\infty(\Omega)} = \Lambda_p^{d-k} \|f\|_{L^\infty(\Omega)},
\]
and
\[
\left\| \frac{\partial g}{\partial x_k} \right\|_{L^\infty(\Omega)} \le \left( \prod_{j=k+1}^{d} \|I_p^{(j)}\| \right) \left\| \frac{\partial f}{\partial x_k} \right\|_{L^\infty(\Omega)} = \Lambda_p^{d-k} \left\| \frac{\partial f}{\partial x_k} \right\|_{L^\infty(\Omega)}.
\]
Combining these inequalities and using the norm $\| \cdot \|_{C^1_{x_k}(\Omega)} := \| \cdot \|_{L^\infty(\Omega)} + \| \frac{\partial\; \cdot}{\partial x_k} \|_{L^\infty(\Omega)}$ yields 
\begin{equation} \label{equ: directional intermediate norm bridge}
\|g\|_{C^1_{x_k}(\Omega)} \le \left( \prod_{j=k+1}^{d} \|I_p^{(j)}\| \right) \|f\|_{C^1_{x_k}(\Omega)} = \Lambda_p^{d-k} \|f\|_{C^1_{x_k}(\Omega)}.
\end{equation}
Now, we apply \autoref{thm: Stability under node perturbations} along the $x_k$ axis, and substituting the intermediate bound \eqref{equ: directional intermediate norm bridge} yields
\begin{equation} \label{equ: active component intermediate step}
\left\| \left(I_p^{(k)}-\tilde I_p^{(k)}\right) g \right\|_{L^\infty(\Omega)} \le \widehat C_p\,\delta\,\Lambda_p \|g\|_{C^1_{x_k}(\Omega)} \le \widehat C_p\,\delta\,\Lambda_p \left( \prod_{j=k+1}^{d} \|I_p^{(j)}\| \right) \|f\|_{C^1_{x_k}(\Omega)}.
\end{equation}

Finally, evaluating the remaining outermost perturbed directional linear operators $\tilde{I}_p^{(1)} \otimes \cdots \otimes \tilde{I}_p^{(k-1)}$ sequentially on top of \eqref{equ: active component intermediate step} gives 
\begin{equation} \label{equ: rigorous Tkf bound explicit}
\|T_kf\|_{L^\infty(\Omega)} \le \widehat C_p\,\delta\,\Lambda_p \left( \prod_{j=1}^{k-1} \|\tilde I_p^{(j)}\| \right) \left( \prod_{j=k+1}^{d} \|I_p^{(j)}\| \right) \|f\|_{C^1_{x_k}(\Omega)} = \widehat{C}_p \delta \tilde{\Lambda}_p^{k-1} \Lambda_p^{d-k+1} \|f\|_{C^1_{x_k}(\Omega)}.
\end{equation}

Now from \eqref{equ: master error split sum}, and as $\|f\|_{C^1_{x_k}(\Omega)} \le \|f\|_{C^1(\Omega)}$ globally, yields the final multivariate error bound
\begin{equation}
\label{equ: final multivariate lifted bound}
\| \mathbf{I}_{\mathbf{p}}f - \tilde{\mathbf{I}}_{\mathbf{p}}f \|_{L^\infty(\Omega)} \le \left[ \sum_{k=1}^{d} \tilde{\Lambda}_p^{k-1} \Lambda_p^{d-k+1} \right] \widehat C_p \delta \|f\|_{C^{1}(\Omega)}.
\end{equation}

\begin{myremark}
From \autoref{thm: unconditional bound}, we have $\tilde{\Lambda}_p \le p^C \Lambda_p$ for a positive constant $C > 0$ independent of $p$. Now the final multivariate uniform error bound on the hypercube $\Omega$ simplifies to
\begin{equation}
    \| \mathbf{I}_{\mathbf{p}}f - \tilde{\mathbf{I}}_{\mathbf{p}}f \|_{L^\infty(\Omega)} \le \widehat C_p' \,\delta \, \Lambda_p^d \|f\|_{C^{1}(\Omega)},
\end{equation}
where the compound multi-dimensional scaling constant is defined by $\widehat C_p':= d \, p^{Cd} \widehat{C}_p $.
\end{myremark}

\subsection{Error Bounds and Convergence for Well-Separated Analytic Kernels}

\begin{mycorr} \label{cor: Analytic stability bound_11}
Let $f$ be analytic in $[-1, 1]$ and analytically continuable to the open Bernstein ellipse $\mathscr{B}_\rho$ for $\rho>1$ and also $f$ is bounded by $M$ in $\mathscr{B}_\rho$. Then, under the assumptions of \autoref{thm: Stability under node perturbations}, the perturbed interpolant $\tilde I_p f$ satisfies
\[
    \| f - \tilde I_p f \|_{L^\infty[-1,1]} \le C_1 \rho^{-p} + C_2 \, \delta \, \|f\|_{C^{1}[-1,1]},
\]
where $C_1$ depends on the bound $M$ of $f$ in $\mathscr{B}_\rho$ and $\rho$, and $C_2$ depends on $p$.
\end{mycorr}

\begin{proof}
    The error in approximating $f$ using the interpolant $\tilde{I}_p f$ (evaluated at the perturbed nodes $\{\tilde\xi_i\}_{i=1}^p$) by triangle inequality, is as follows
    \begin{equation}\label{equ: analytic master triangle}
        \|f - \tilde I_p f\|_{L^\infty[-1,1]} \le \|f - I_p f\|_{L^\infty[-1,1]} + \|I_p f - \tilde I_p f\|_{L^\infty[-1,1]},
    \end{equation}
    where $I_p f$ denote the interpolant of $f$ evaluated at the Chebyshev nodes $\{\xi_i\}_{i=1}^p$.

    By assumption, $f$ is analytic in $[-1,1]$ and analytically continuable on the Bernstein ellipse $\mathscr{B}_\rho$, the Chebyshev interpolation error satisfies the following geometric decay (see, \cite[Theorem 8.2]{trefethen2019approximation})
    \begin{equation}\label{equ: analytic geometric nominal bound}
    \|f - I_p f\|_{L^\infty[-1,1]} \le C_1\rho^{-p},
    \end{equation}
    where the positive constant $C_1 > 0$ depends strictly on $\rho$ and the bound $M$ of $f$ on $\mathscr{B}_\rho$. Now, using the estimate of \autoref{thm: Stability under node perturbations}, the second term on the right-hand side of \eqref{equ: analytic master triangle} satisfies
    \begin{equation}\label{equ: analytic perturbation component bound}
    \|I_p f - \tilde{I}_p f\|_{L^\infty[-1,1]} \le C_2 \delta \|f\|_{C^{1}[-1,1]},
    \end{equation}
    where $C_2 > 0$ is a constant depends on $p$.
    
    Substituting the geometric truncation estimate \eqref{equ: analytic geometric nominal bound} and the stability estimate \eqref{equ: analytic perturbation component bound} back into \eqref{equ: analytic master triangle} directly gives the desired uniform error bound:
    \[
        \| f - \tilde{I}_p f \|_{L^\infty[-1,1]} \le C_1\rho^{-p} + C_2 \delta \|f\|_{C^{1}[-1,1]}.
    \]
    This completes the proof.
\end{proof}

To establish the mathematical robustness of our proposed LoCCA algorithm for practical data-driven applications, we show the convergence of the algorithm as given in the following \autoref{thm: LoCCA convergence theorem}.

\begin{theorem}[Convergence Theorem] \label{thm: LoCCA convergence theorem}
        Let $\mcln_\mclx$ and $\mcln_\mcly$ constructed using \autoref{alg: loc_nbds} in the domains $\mclx$ and $\mcly$ respectively. Now if $\widetilde{\mcln}_\mclx$ and $\widetilde{\mcln}_\mcly$ are generated using the \autoref{alg: LoCCA} for the kernel $\mclk$, then we have for all $x\in\mclx$ and $y\in\mcly$ 
        \begin{equation}
        |\mclk(x,y) - \mclk(x,\widetilde{\mcln}_\mcly) \, \mclk(\widetilde{\mcln}_\mclx,\widetilde{\mcln}_\mcly)^{-1} \, \mclk(\widetilde{\mcln}_\mclx,y) | \leq \boldsymbol{\widetilde C} \cdot \varepsilon,
    \end{equation}
    where $\boldsymbol{\widetilde C}$ will depend polynomial-logarithmically on  $\#(\widetilde{\mcln}_\mclx)$ and $\#(\mcln_\mclx)$ and $\varepsilon$ is order of accuracy.
\end{theorem}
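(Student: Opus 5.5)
The plan is to split the error into an analytic part and an algebraic part, with the perturbed Chebyshev interpolant acting as an intermediate degenerate kernel. First I map $\mclx$ and $\mcly$ affinely onto $[-1,1]^d$. For each Chebyshev node $\boldsymbol{\xi}_{\mathbf{i}}$ I pick the representative $\tilde{\boldsymbol{\xi}}_{\mathbf{i}}\in\mcln_\mclx$ given by its nearest neighbour. This is admissible because \autoref{alg: loc_nbds} always includes the nearest physical point, so $\|\tilde{\boldsymbol{\xi}}_{\mathbf{i}}-\boldsymbol{\xi}_{\mathbf{i}}\|_2\le h_X$. I then assume the fill distance is small enough that $\delta:=\max(h_X,h_Y)\le m/4$. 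To invoke the tensor-product theory I also need the representatives to form a perturbed tensor grid. I will add this as a hypothesis, or else treat the general scattered case as a perturbation handled coordinate-wise.

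With this in place I write
\[
\mclk(x,y)\approx \mclk_{\tilde P}(x,y):=\sum_{\mathbf{i},\mathbf{j}}\tilde L_{\mathbf{i}}(x)\,\mclk(\tilde{\boldsymbol{\xi}}_{\mathbf{i}},\tilde{\boldsymbol{\xi}}_{\mathbf{j}})\,\tilde L_{\mathbf{j}}(y),
\]
interpolating in $x$ and then in $y$. For well-separated analytic kernels, the bound \eqref{equ: final multivariate lifted bound} together with the tensor version of \autoref{cor: Analytic stability bound_11} gives
\[
\|\mclk-\mclk_{\tilde P}\|_\infty\le (\Lambda_p^d+\tilde\Lambda_p^d)\bigl(C\rho^{-p}+\widehat C_p'\delta\|\mclk\|_{C^1}\bigr).
\]
I call this $\varepsilon_{\mathrm{int}}$. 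Choosing $p$ and $\delta$ so that it is $\le\varepsilon$ makes the analytic part $\mathcal O(\varepsilon)$, with constants polylogarithmic in $p$ up to the $p^C$ factors from \autoref{thm: unconditional bound}.

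The key structural consequence is that every column $\mclk(\cdot,y)$, and every row $\mclk(x,\cdot)$, is within $\varepsilon_{\mathrm{int}}$ of the span of functions sampled at $\mcln_\mclx$ (respectively $\mcln_\mcly$). Concretely,
\[
\mclk(x,y)\approx \tilde{\mathbf L}(x)^\top\mclk(\tilde{\mathcal C}_\mclx,y),
\]
where $\tilde{\mathcal C}_\mclx\subset\mcln_\mclx$.

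The second step is algebraic. Strong RRQR on $\mathbf{K}=\mclk(\mcln_\mclx,\mcln_\mcly)$ with tolerance $\varepsilon$ gives skeleton columns $\widetilde{\mcln}_\mcly$ satisfying
\[
\mathbf{K}=\mathbf{K}(:,\widetilde{\mcln}_\mcly)\mathbf{T}+\mathbf{E},\qquad \|\mathbf{T}\|_{\max}\le c,\qquad \|\mathbf{E}\|\lesssim\sqrt{1+c^2 r(n_y-r)}\,\varepsilon.
\]
The transpose RRQR gives the analogous statement for $\widetilde{\mcln}_\mclx$. Using this, I extend from the sampled set to arbitrary $x$ and $y$. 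For any $y$, the column $\mclk(\tilde{\mathcal C}_\mclx,y)$ is $\varepsilon_{\mathrm{int}}$-close to a combination of columns of $\mathbf{K}$, which in turn is close to a combination of the skeleton columns. Hence there is a coefficient vector $\mathbf t(y)$, of size controlled by $\tilde\Lambda_p^d$ and $c$, with $|\mclk(x,y)-\mclk(x,\widetilde{\mcln}_\mcly)\mathbf t(y)|\lesssim\varepsilon$ for all $x$. The same argument applies to rows. Finally I combine the two one-sided interpolative decompositions into the CUR form with the standard identity
\[
K-CU^{-1}R=(K-C\mathbf T)-(C-CU^{-1}U)\dots,
\]
by writing $\mclk(x,y)-\mclk(x,\widetilde{\mcln}_\mcly)\mathbf U^{-1}\mclk(\widetilde{\mcln}_\mclx,y)$ as a sum of residual terms of the two interpolative decompositions, each multiplied by $\mathbf{U}^{-1}$ times a skeleton row.

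I expect this last step to be the main obstacle: bounding $\|\mclk(x,\widetilde{\mcln}_\mcly)\mathbf{U}^{-1}\|$, where $\mathbf U=\mclk(\widetilde{\mcln}_\mclx,\widetilde{\mcln}_\mcly)$. My first attempt will express it through the RRQR interpolation matrix, $\mathbf{U}^{-1}$ times skeleton rows equal to $\mathbf{T}$-type coefficients. I will then use the strong RRQR guarantee $\sigma_{\min}(\mathbf U)\ge\sigma_r(\mathbf K)/\sqrt{1+c^2r(n-r)}$, which gives a factor polynomial in $r=\#(\widetilde{\mcln}_\mclx)$ and $n_x=\#(\mcln_\mclx)$. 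This is consistent with the claimed dependence of $\boldsymbol{\widetilde C}$, read loosely as polynomial times logarithmic factors.
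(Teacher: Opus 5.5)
Your proposal is essentially the paper's own argument, which is itself adapted from Cambier and Darve. The perturbed tensor Chebyshev interpolant serves as an intermediate degenerate kernel whose error is simply assumed to be at most $\varepsilon$ (the paper's step $\overline{\varepsilon}\le\varepsilon$), strong RRQR on $\mclk(\mcln_\mclx,\mcln_\mcly)$ supplies bounded interpolation coefficients and $\sigma_r(\widetilde{\mathbf K})\ge\sigma_r(\mathbf K)/p(n_x,n_y,r)$ as in \autoref{lem: locca_cur} and \autoref{lem: inverse_bound}, and the bound on $\mclk(x,\widetilde{\mcln}_\mcly)\widetilde{\mathbf K}^{-1}$ that you single out as the main obstacle is exactly \autoref{lem: basis_bound}, closed by the same cancellation of an $\mathcal O(\varepsilon)$ residual against $\|\widetilde{\mathbf K}^{-1}\|=\mathcal O(\varepsilon^{-1})$. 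Organizing the bookkeeping through two continuous one-sided decompositions, and stating explicitly that the nearest-neighbour representatives must form a perturbed tensor grid with $\delta\le m/4$, does not change the route; it only makes visible hypotheses that the paper's proof uses implicitly.
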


The \autoref{alg: LoCCA} is implemented using the standard rank-revealing QR (RRQR) factorization with column pivoting, although the theorem assumes that the skeleton sets $\widetilde{\mcln}_\mclx$ and $\widetilde{\mcln}_\mcly$ are generated by Strong RRQR factorization \cite{gu1996efficient}. This assumption is introduced to utilize the properties of Strong RRQR, which provide rigorous bounds on the CUR approximation error. However, the numerical implementation is presented employing the standard RRQR in \autoref{sec: Numerical Experiments}. Now, before proceeding to the proof of the above theorem, we discuss a few lemmas below.

\begin{mylma}[Localized CUR decomposition] \label{lem: locca_cur}

Let $\mathbf{K} = \mclk(\mcln_\mclx,\mcln_\mcly)$, where $\mcln_\mclx$ and $\mcln_\mcly$ are the localized neighborhoods generated by \autoref{alg: loc_nbds}. Suppose that the skeleton sets $\widetilde{\mcln}_\mclx$ and $\widetilde{\mcln}_\mcly$ are selected by \autoref{alg: LoCCA}. 

Then there exist interpolation matrices $\overline L_\mclx$, $\overline L_\mcly$ and a residual matrix $\mcle_{\mathrm{qr}}(\mcln_\mclx, \mcln_\mcly)$ such that

\[
    \mathbf{K} = \begin{bmatrix} I \\ \overline L_\mclx \end{bmatrix} \, \mclk( \widetilde{\mcln}_\mclx, \widetilde{\mcln}_\mcly ) \, \begin{bmatrix} I& \overline L_\mcly^\top \end{bmatrix} + \mcle_{\mathrm{qr}}(\mcln_\mclx,\mcln_\mcly),
\]
where $\| \mcle_{\mathrm{qr}}(\mcln_\mclx,\mcln_\mcly) \|_2 \le p_1(n_x,n_y,r) \sigma_{r+1}(\mathbf{K})$, and $\| \overline L_\mclx \|_2, \, \| \overline L_\mcly \|_2 \le p_2(n_x,n_y,r)$ with $p_1$ and $p_2$ being low-degree polynomials.

\end{mylma}

\begin{proof}
The result follows by applying the strong Rank-Revealing QR factorization independently of the columns and rows of the localized matrix
$\mathbf{K}$. The existence of the interpolation matrices, the CUR decomposition, and the stated bounds are direct consequences of
Cheng et al.'s ~\cite[Theorem 3 \& Remark 5]{cheng2005compression}, which applies to any matrix.
\end{proof}

\begin{mylma}[Stability of the localized skeleton matrix] \label{lem: inverse_bound}
Under the assumptions of \autoref{lem: locca_cur}, let $ \mathbf{\widetilde{K}} = \mclk( \widetilde{\mcln}_\mclx, \widetilde{\mcln}_\mcly )$ and define $ \varepsilon  := \| \mcle_{\mathrm{qr}}(\mcln_\mclx,\mcln_\mcly) \|_2$. Then $\| \widetilde{\mathbf{K}}^{-1} \|_2 \le \frac{ p(n_x, n_y, r)^2 } {\varepsilon},$ where $p$ is a low-degree polynomial. Consequently, $\| \widetilde{\mathbf{K}}^{-1} \|_\infty \le C \, \frac{ p(n_x, n_y,r)^2 } {\varepsilon} $, where $C$ is the norm-equivalence constant.
\end{mylma}

\begin{proof}
By the Strong RRQR theorem \cite{gu1996efficient}, the selected skeleton matrix satisfies 
\[
    \sigma_r(\widetilde{\mathbf{K}}) \ge \frac{\sigma_r(\mathbf{K})} {p(n_x,n_y,r)} \implies \| \widetilde{\mathbf{K}}^{-1} \|_2 = \frac{1} {\sigma_r(\widetilde{\mathbf{K}})} \le \frac{p(n_x,n_y,r)} {\sigma_r(\mathbf{K})} .
\]
where $p$ denotes the polynomial bound arising from the Strong RRQR factorization (cf. \cite{gu1996efficient, cheng2005compression}). Since the singular values are nonincreasing, $\sigma_r(\mathbf{K}) \ge \sigma_{r+1}(\mathbf{K})$, it follows that $ \| \widetilde{\mathbf{K}}^{-1} \|_2 \le \frac{p(n_x,n_y,r)} {\sigma_{r+1}(\mathbf{K})}$. Moreover, by \autoref{lem: locca_cur}, $\varepsilon = \| \mcle_{\mathrm{qr}} (\mcln_{\mclx},\mcln_{\mcly}) \|_2 \le p(n_x,n_y,r)\, \sigma_{r+1}(\mathbf{K})$, which implies
\[
    \| \widetilde{\mathbf{K}}^{-1} \|_2 \le \frac{p(n_x,n_y,r)^2} {\varepsilon}.
\]
The corresponding $\infty$-norm estimate follows immediately from the equivalence of matrix norms on the finite-dimensional space of $r\times r$ matrices.
\end{proof}

\begin{mylma}[Stability of the LoCCA basis] \label{lem: basis_bound}
Under the assumptions of \autoref{lem: locca_cur} and \autoref{lem: inverse_bound}, there exists a polynomial $q(n_x, n_y,r)$ such that
\begin{align*}
    \| \mclk(x,\widetilde{\mcln}_\mcly) \, \mclk( \widetilde{\mcln}_\mclx, \widetilde{\mcln}_\mcly ) ^{-1} \|_\infty &\le q(n_x,n_y,r), &\text{for every $x\in\mclx$,}  \\
    \| \mclk( \widetilde{\mcln}_\mclx, \widetilde{\mcln}_\mcly )^{-1} \, \mclk(\widetilde{\mcln}_\mclx,y) \|_\infty &\le q(n_x,n_y,r), &\text{for every $y\in\mcly$}.
\end{align*}
\end{mylma}

\begin{proof}
The proof follows the same strategy as \cite[Lemma~3]{cambier2019fast}, adapted to our framework. Unlike the weighted interpolation formulation considered therein, our construction does not employ weight matrices. Instead, the interpolation operator is bounded directly by the perturbed Lebesgue constant established in \autoref{thm: unconditional bound}. In particular, the associated interpolation operator as defined in \eqref{equ: d-dim'al interpolation operators}, satisfies
\[
    \| \tilde{\mathbf{I}}_{\mathbf{p}} \|_\infty \le \widetilde\Lambda_p^{\,d} = \mathcal O\!\left(p^{Cd}(\log p)^d\right),
\]
which is polynomially bounded and for the domains $\mclx $ and $\mcly$ this operator $\mathbf{\tilde{I}_p}$ is denoted as $L_\mclx$ and $L_\mcly$ respectivly.

Combining this interpolation estimate with the localized CUR decomposition of \autoref{lem: locca_cur} and the inverse bound of \autoref{lem: inverse_bound}, the remainder of the proof is identical to that of \cite[Lemma~3]{cambier2019fast}. The residual contribution is uniformly bounded since
\[
    \|\mcle_{\mathrm{qr}}\|_2=\varepsilon, \qquad \| \mclk(\widetilde{\mcln}_{\mclx},\widetilde{\mcln}_{\mcly})^{-1} \|_\infty = \mathcal O(\varepsilon^{-1}),
\]
so that the factor $\varepsilon$ cancels. Consequently, both basis operators are bounded by a polynomial $q(n_x,n_y,r)$.
\end{proof}

\begin{proof}
The proof follows the same strategy as \cite[Lemma~3]{cambier2019fast}, adapted to the present localized framework. Unlike the weighted interpolation formulation considered therein, our construction does not employ weight matrices. Instead, the interpolation operator is bounded directly by the perturbed Lebesgue constant established in \autoref{thm: unconditional bound}. In particular, the $d$-dimensional interpolation operator defined in \eqref{equ: d-dim'al interpolation operators} satisfies
\[
    \| \widetilde{\mathbf I}_{\mathbf p} \|_\infty
    \le
    \widetilde{\Lambda}_p^{\,d}
    =
    \mathcal O\!\left(p^{Cd}(\log p)^d\right),
\]
which is polynomially bounded. For the localized neighborhoods $\mclx$ and $\mcly$, this interpolation operator corresponds to the operators $L_{\mclx}$ and $L_{\mcly}$, respectively.

Combining this interpolation estimate with the localized CUR decomposition of \autoref{lem: locca_cur} and the inverse bound of \autoref{lem: inverse_bound}, the remainder of the argument proceeds analogously to that of \cite[Lemma~3]{cambier2019fast}. Indeed, the residual contribution is uniformly bounded since
\[
    \|\mcle_{\mathrm{qr}}\|_2=\varepsilon,
    \qquad
    \|
    \mclk(\widetilde{\mcln}_{\mclx},\widetilde{\mcln}_{\mcly})^{-1}
    \|_\infty
    =
    \mathcal O(\varepsilon^{-1}),
\]
so that the factor $\varepsilon$ cancels. Since both the interpolation operator and the inverse localized skeleton matrix are polynomially bounded, the resulting basis operators are likewise bounded by a polynomial $q(n_x,n_y,r)$.
\end{proof}

Equipped with the results discussed above, we now prove \autoref{thm: LoCCA convergence theorem}. The proof strategy adapts a framework similar to that of Cambier and Darve \cite[Theorem 2]{cambier2019fast} to our perturbed node configurations.


\begin{proof}[Proof of \autoref{thm: LoCCA convergence theorem}]
The proof combines the interpolation approximation with the localized CUR decomposition. By the interpolation property,
\begin{equation}\label{eq: interp}
    \mclk(x,y) = L_{\mclx}(x,\mcln_{\mclx}) \,\mclk(\mcln_{\mclx},\mcln_{\mcly})\, L_{\mcly}(y,\mcln_{\mcly})^{\top} + \mcle_{\mathrm{int}}(x,y),
\end{equation}
where $\|\mcle_{\mathrm{int}}\|_{\infty} \le \overline{\varepsilon}$. Substituting the localized CUR decomposition of
\autoref{lem: locca_cur} into \eqref{eq: interp} gives
\[
    \mclk(x,y) = \mclk(x,\widetilde{\mcln}_{\mcly}) \, \widetilde{\mathbf{K}}^{-1} \, \mclk(\widetilde{\mcln}_{\mclx},y) + \mcle(x,y),
\]
where $\widetilde{\mathbf{K}}
=
\mclk(\widetilde{\mcln}_{\mclx},
      \widetilde{\mcln}_{\mcly})$
and the remainder $\mcle(x,y)$ consists of (i) the interpolation error, (ii) the localized CUR residual, and (iii) mixed terms containing both interpolation and CUR residuals. Now, to bound these terms, we use

\begin{enumerate}
\item The interpolation operators satisfy $\|L_{\mclx}\|_{\infty}, \, \|L_{\mcly}\|_{\infty} = \mathcal O(\widetilde{\Lambda}_p^{\,d}) = \mathcal O\!\left(p^{Cd}(\log p)^d\right)$ by \autoref{thm: unconditional bound}.

\item By \autoref{lem: basis_bound}, $\| \mclk(x,\widetilde{\mcln}_{\mcly}) \widetilde{\mathbf{K}}^{-1} \|_{\infty}, \, \| \widetilde{\mathbf{K}}^{-1} \mclk(\widetilde{\mcln}_{\mclx},y) \|_{\infty} \le q(n_x,n_y,r)$.

\item By \autoref{lem: inverse_bound}, $\| \widetilde{\mathbf{K}}^{-1} \|_{\infty} = \mathcal O(\varepsilon^{-1})$, while the CUR residual satisfies  $\| \mcle_{\mathrm{qr}} \|_2 = \varepsilon$. Hence every mixed term containing $\widetilde{\mathbf{K}}^{-1}$ and $\mcle_{\mathrm{qr}}$ is uniformly bounded since the factor $\varepsilon$ cancels.

\item Finally, $ \overline{\varepsilon} \le \varepsilon$, so the interpolation error is of the same order as the CUR residual.
\end{enumerate}

Combining the above estimates shows that every component of
$\mcle(x,y)$
is bounded by a polynomial factor multiplying
$\varepsilon$. Therefore,
\[
    \left| \mclk(x,y) - \mclk(x,\widetilde{\mcln}_{\mcly}) \widetilde{\mathbf{K}}^{-1} \mclk(\widetilde{\mcln}_{\mclx},y) \right| \le \boldsymbol{\widetilde C}\cdot\varepsilon,
\]
where $\boldsymbol{\widetilde C}$ depends at most polylogarithmically on $r$, $n_x$, and $n_y$.
\end{proof}



\subsection{Relationship between the Chebyshev grid and its Local neighbors}

\begin{mylma}[Scaling of the $l$-th Neighbor Distance on a Hypercube]
\label{lma: scaling_l_neighbor_hypercube}
Let $\mclx \subset \mathbb{R}^d$ be a $d$-dimensional hypercube with finite volume, and let $X = \{x_i\}_{i=1}^n \subset \mclx$ be a uniform Cartesian grid. For any arbitrary query point $\xi \in \mclx$, the distance to its $l$-th nearest neighbor in $X$ be defined as
\begin{equation}
    \delta_l(\xi) = \min_{S \subset X, \, \#(S) = l} \max_{x_i \in S} \|\xi - x_i\|.
\end{equation}
Then, there exists a uniform constant $C_{d,\mclx} > 0$, depending exclusively on the dimension $d$ and the volume of the domain $\mclx$, such that for all $1 \le l \le n$, the global $l$-th neighbor distance $\delta_l = \sup_{\xi \in \mclx} \delta_l(\xi) $ satisfies
\begin{equation}
    \delta_l \le C_{d,\mclx} \left( \frac{l}{n} \right)^{1/d}.
\label{equ: global l-th nbd distance}
\end{equation}
\end{mylma}

\begin{proof}
It is easy to show that if $X$ is a uniform Cartesian grid on the hypercube $\mclx$, its fill distance satisfies $h_X \le c_2 n^{-1/d}$, where $c_2>0$ depends on the dimension $d$ and the volume of the domain $\mclx$. 

Consider a ball $B(\xi,r)$ centered at any arbitrary query point $\xi \in \mclx$. Because $\mclx$ is a hypercube, its boundary faces intersect orthogonally at its vertices, ensuring that the intersection volume of a shrunken inner ball $B(\xi, r - h_X) \cap \mclx$ retains at least a $2^{-d}$ fraction of a full ball's volume, yielding the intersected volume greater than $ 2^{-d} V_d (r - h_X)^d$ where $V_d$ is the volume of the unit ball. By a standard geometric covering argument, this inner continuous volume must be fully contained within the union of discrete balls of radius $h_X$ centered at the grid points $X \cap B(\xi, r)$, each possessing volume $V_d h_X^d$ i.e., $\left( B(\xi, r - h_X) \cap \mclx \right) \subset \bigcup_{x_i \in X \cap B(\xi, r)} B(x_i, h_X)$. Now, in terms of volume we have $\text{Vol}(B(\xi, r - h_X) \cap \mclx) \le \#(X \cap B(\xi, r)) \cdot \text{Vol}(B(x_i, h_X))$, providing the explicit point count lower bound
\[
    \#(X \cap B(\xi,r)) \ge \frac{2^{-d} V_d (r - h_X)^d}{V_d h_X^d} = \left( \frac{r - h_X}{2 h_X} \right)^d.
\]
To ensure the intersection contains at least $l$ grid points, it is sufficient to select a radius scale $r$ such that $\left(\frac{r - h_X}{2 h_X}\right)^d \ge l$ holds and simplifying yields $r \ge h_X \left( 1 + 2l^{1/d} \right)$. By definition, $\delta_l(\xi)$ satisfies $\delta_l(\xi) \le r$. Setting $r$ at the exact lower bound and using the fact $1 \le l^{1/d}$ for all $l \ge 1$, along with $h_X \le c_2 n^{-1/d}$, yields
\[
    \delta_l(\xi) \le 3 c_2 \left( \frac{l}{n} \right)^{1/d}.
\]
Taking the supremum over all $\xi \in \mclx$ yields the uniform global upper bound $\delta_l \le C_{d,\mclx} (l/n)^{1/d}$ for a constant $C_{d,\mclx} = 3 c_2 > 0$.
\end{proof}

\section{Numerical Experiments}\label{sec: Numerical Experiments}

\begin{table}[ht]
\centering
\resizebox{0.85\textwidth}{!}{%
\begin{tabular}{llcccccc}
\toprule
& & \multicolumn{3}{c}{Uniform Grid} 
& \multicolumn{3}{c}{Random Grid (Mean over 500 trials)} \\

\cmidrule(lr){3-5} \cmidrule(lr){6-8}

Kernel & Method
& $\varepsilon=10^{-4}$
& $\varepsilon=10^{-6}$
& $\varepsilon=10^{-8}$
& $\varepsilon=10^{-4}$
& $\varepsilon=10^{-6}$
& $\varepsilon=10^{-8}$ \\

\midrule

\multirow{5}{*}{$1/r$}
& Rank $(r)$ &9 &16 &23 & -- & -- & -- \\
& LoCCA      &$3.475303\times 10^{-5}$ &$9.749222\times 10^{-7}$ &$1.266800\times 10^{-8}$ &$3.341323\times 10^{-5}$ &$1.090436\times 10^{-6}$ &$1.282490\times 10^{-8}$ \\
& ACA        &$4.364856\times 10^{-4}$ &$1.128059\times 10^{-6}$ &$4.408572\times 10^{-8}$ &$1.849260\times 10^{-4}$ &$2.846982\times 10^{-6}$ &$6.024801\times 10^{-8}$ \\
& SI         &$2.034850\times 10^{-5}$ &$3.878594\times 10^{-7}$ &$7.975095\times 10^{-9}$ &$1.953946\times 10^{-5}$ &$3.580525\times 10^{-7}$ &$7.692424\times 10^{-9}$ \\
\midrule

\multirow{5}{*}{$\log r$}
& Rank $(r)$ &7 &10 &14 & -- & -- & -- \\
& LoCCA      &$3.459514\times 10^{-5}$ &$2.456654\times 10^{-7}$ &$1.355741\times 10^{-9}$ &$4.077413\times 10^{-5}$ &$2.422734\times 10^{-7}$ &$1.224119\times 10^{-9}$ \\
& ACA        &$8.498610\times 10^{-5}$ &$3.951726\times 10^{-7}$ &$1.449567\times 10^{-8}$ &$5.555113\times 10^{-5}$ &$1.645839\times 10^{-6}$ &$1.626510\times 10^{-8}$ \\
& SI         &$7.620433\times 10^{-6}$ &$2.369461\times 10^{-7}$ &$1.531807\times 10^{-9}$ &$7.466063\times 10^{-6}$ &$2.285059\times 10^{-7}$ &$1.383663\times 10^{-9}$ \\
\midrule

\multirow{5}{*}{$\dfrac{\cos r}{r}$}
& Rank $(r)$ &12 &20 &29 & -- & -- & -- \\
& LoCCA      &$7.963556\times 10^{-5}$ &$1.161285\times 10^{-6}$ &$1.733122\times 10^{-8}$ &$7.056927\times 10^{-5}$ &$1.896703\times 10^{-6}$ &$2.265644\times 10^{-8}$ \\
& ACA        &$3.335757\times 10^{-4}$ &$1.665147\times 10^{-6}$ &$3.302509\times 10^{-8}$ &$2.658106\times 10^{-4}$ &$3.033433\times 10^{-6}$ &$4.244274\times 10^{-8}$ \\
& SI         &$2.450178\times 10^{-5}$ &$6.568952\times 10^{-7}$ &$7.356860\times 10^{-9}$ &$2.291104\times 10^{-5}$ &$6.397105\times 10^{-7}$ &$6.858992\times 10^{-9}$ \\
\midrule

\multirow{5}{*}{$e^{-r^2}$}
& Rank $(r)$ &14 &22 &26 & -- & -- & -- \\
& LoCCA      &$1.574768\times 10^{-4}$ &$6.438869\times 10^{-6}$ &$6.468336\times 10^{-6}$ &$1.861098\times 10^{-4}$ &$6.358549\times 10^{-6}$ &$6.451595\times 10^{-6}$ \\
& ACA        &$3.362050\times 10^{-2}$ &$3.362050\times 10^{-2}$ &$3.362050\times 10^{-2}$ &$1.354096\times 10^{-2}$ &$4.693415\times 10^{-3}$ &$4.492454\times 10^{-3}$ \\
& SI         &$2.242934\times 10^{-4}$ &$6.512853\times 10^{-6}$ &$6.508207\times 10^{-6}$ &$2.176852\times 10^{-4}$ &$6.452330\times 10^{-6}$ &$6.441007\times 10^{-6}$ \\
\midrule

\multirow{5}{*}{$\sqrt{1+r^2}$}
& Rank $(r)$ &6 &12 &19 & -- & -- & -- \\
& LoCCA      &$3.152499\times 10^{-5}$ &$2.549020\times 10^{-6}$ &$5.145715\times 10^{-9}$ &$3.139230\times 10^{-5}$ &$1.621400\times 10^{-6}$ &$5.378364\times 10^{-9}$ \\
& ACA        &$8.214168\times 10^{-5}$ &$2.544891\times 10^{-6}$ &$2.210521\times 10^{-8}$ &$2.181619\times 10^{-4}$ &$1.876790\times 10^{-6}$ &$1.642925\times 10^{-8}$ \\
& SI         &$3.051112\times 10^{-5}$ &$2.397033\times 10^{-7}$ &$5.315449\times 10^{-9}$ &$2.941915\times 10^{-5}$ &$2.269608\times 10^{-7}$ &$5.321591\times 10^{-9}$ \\
\bottomrule
\end{tabular}
}
\caption{
Comparison of relative errors for uniform and random grids across different kernels and approximation methods for tolerances 
$\varepsilon \in \{10^{-4},10^{-6},10^{-8}\}$ on the domains $\mclx = [-3,-1]\times[0,2]$ and $\mcly = [1,3]\times[0,2]$.  The target and source domains are discretized with a grid of size $N = 1000^2$, yielding $1,000,000$ points in each domain. LoCCA is configured with a $P = 7^2$ Chebyshev grid and a neighborhood size $l = 1$.  
}
\label{tab: LoCCA relative error data table with tolerance comparison}
\end{table}

\begin{figure}[ht]
    \centering
    \begin{subfigure}[t]{0.48\linewidth}
        \centering
        \includegraphics[width=\linewidth]{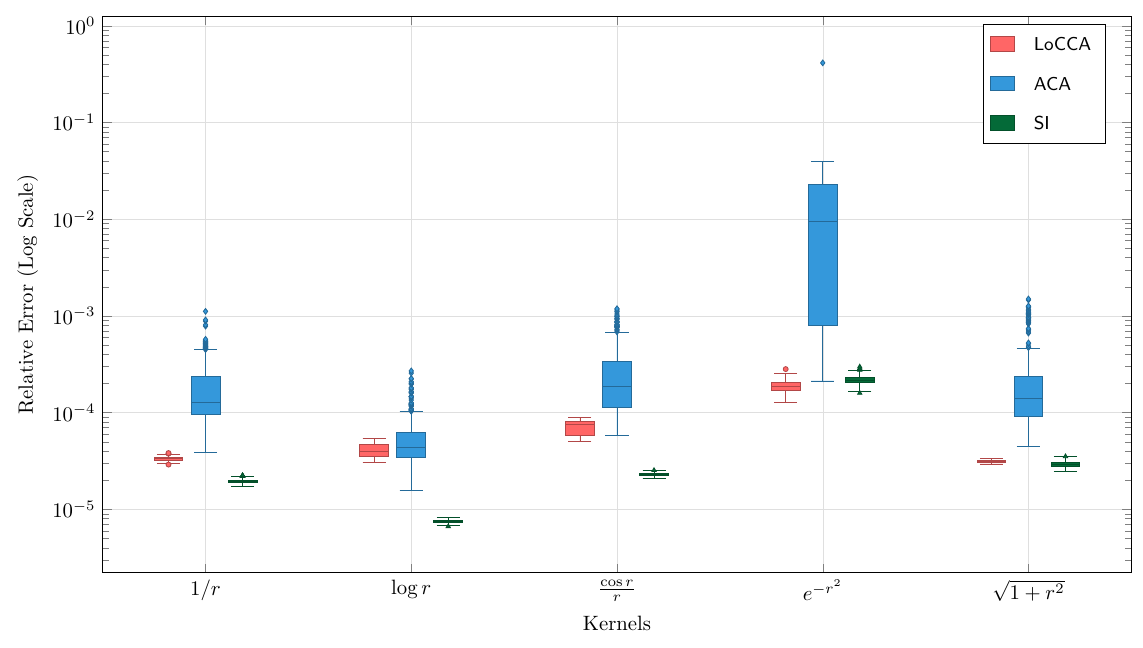}
        \caption{$\varepsilon = 1.0\textrm{e}\!-\!4$}
        \label{fig: LoCCA box tol 1.0e-4}
    \end{subfigure}
    \begin{subfigure}[t]{0.48\linewidth}
        \centering
        \includegraphics[width=\linewidth]{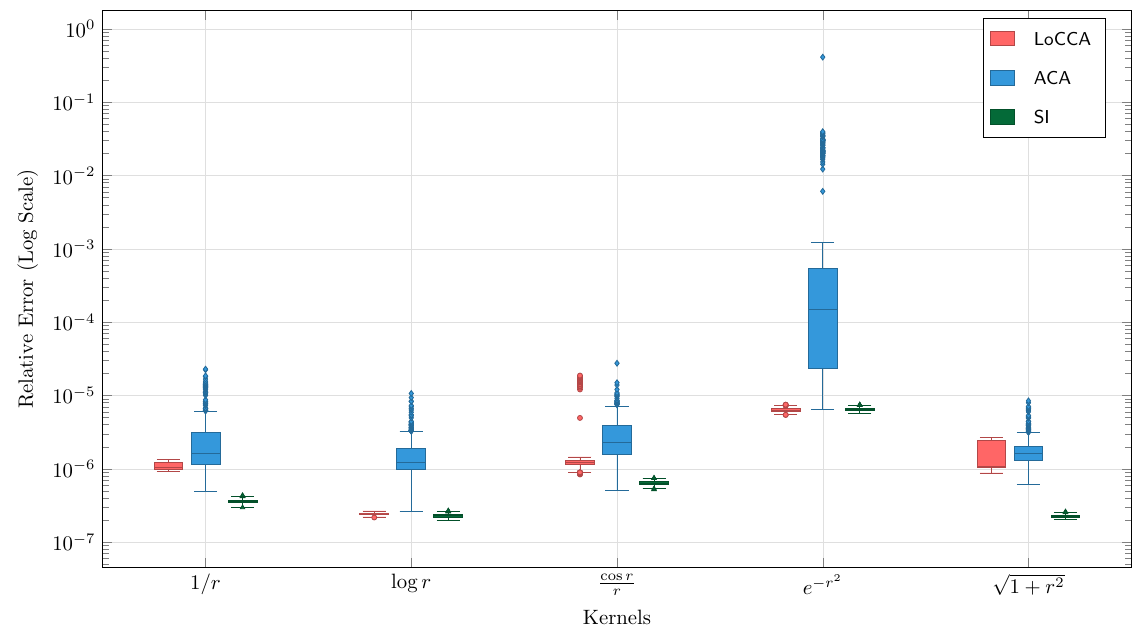}
        \caption{$\varepsilon = 1.0\textrm{e}\!-\!6$}
        \label{fig: LoCCA box tol 1.0e-6}
    \end{subfigure}
    \begin{subfigure}[t]{0.48\linewidth}
        \centering
        \includegraphics[width=\linewidth]{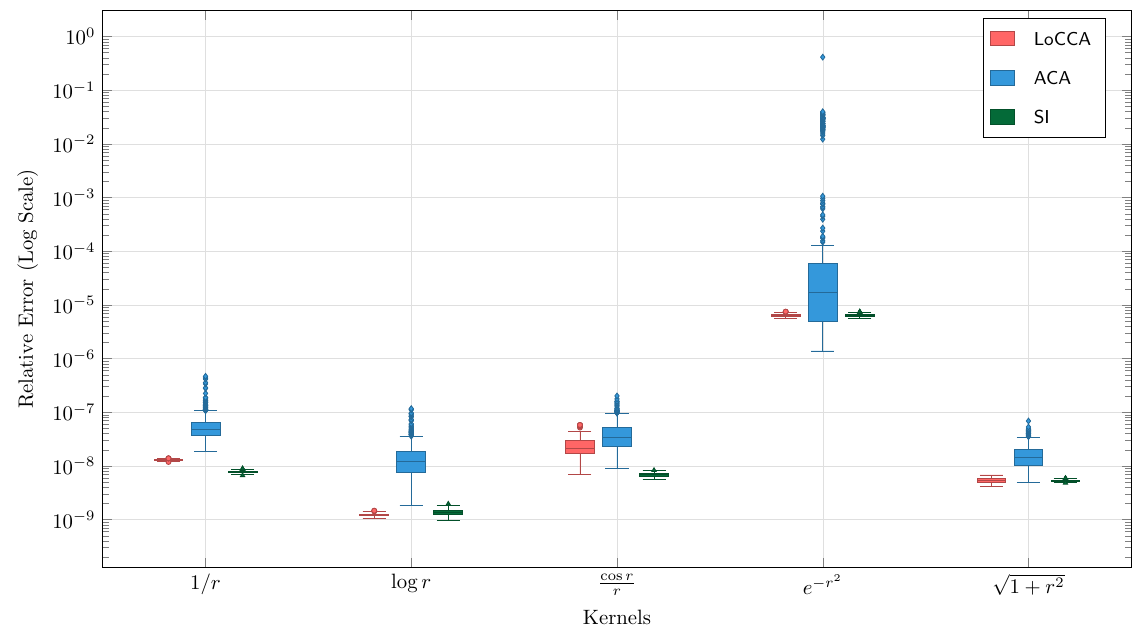}
        \caption{$\varepsilon = 1.0\textrm{e}\!-\!8$}
        \label{fig: LoCCA box tol 1.0e-8}
    \end{subfigure}
    \caption{Relative errors computed over $500$ independent random discretizations of the domains $\mclx = [-3,-1]\times[0,2]$ and $\mcly = [1,3]\times[0,2]$ for all the kernels as mentioned in \eqref{equ: kernel list}. Given a tolerance $\varepsilon \in \{10^{-4},10^{-6},10^{-8}\}$ and for each method and kernel, the distribution of errors across random grids is summarized using boxplots: the central line indicates the median, the box represents the interquartile range (25th to 75th percentiles), whiskers indicate the range of typical (non-outlier) values, and individual points denote outliers.} 
    \label{fig: LoCCA_boxplot_all_kernels_all_tols}
\end{figure}

In this section, we present numerical experiments designed to validate the accuracy, efficiency, and robustness of the proposed framework. We systematically evaluate the method across diverse kernel functions \eqref{equ: kernel list}, spatial-domain geometries, and point-distribution settings. All algorithms and benchmarking routines are implemented in Julia \cite{bezanson2017julia}. To ensure reproducibility, the complete source code for all numerical benchmarks is available at \url{https://github.com/SAFRAN-LAB/LoCCA}.

While our theoretical convergence bound (\autoref{thm: LoCCA convergence theorem}) formally relies on Strong Rank-Revealing QR (Strong RRQR) factorizations \cite{gu1996efficient}, the practical implementation employs the standard column-pivoted QR factorization provided natively in Julia. To analyze the error in the approximation, we estimate it with high statistical rigor via randomized sub-sampling: for each trial, a $2000 \times 2000$ submatrix is constructed by uniformly sampling row and column indices, and the relative error is evaluated strictly on the subset. The relative error is measured in Frobenius norm as ${\|\mathbf{K}_{sub}-\mathbf{ \widehat{ \mathbf{K}} }_{sub}\|_F}/{\magn{\mathbf{K}_{sub}}_F}$, where $\mathbf{K}_{sub}$ and $\widehat{ \mathbf{K}}_{sub}$ are the submatrices of the full kernel matrix $\mathbf{K}$ and its approximation $\widehat{ \mathbf{K}}$ respectively.

We compare LoCCA with other standard low-rank approximation techniques for kernel matrices: Adaptive Cross Approximation (ACA) \cite{bebendorf2003adaptive} and Skeletonized Interpolation (SI) \cite{cambier2019fast}. All the methods are evaluated in the same domain settings, and the results are summarized in \autoref{tab: LoCCA relative error data table with tolerance comparison}. The comparison is done as follows.

In the uniform grid setting, \autoref{alg: LoCCA} determines a rank $r$ for a given kernel and a prescribed tolerance, which is subsequently fixed, and the same rank is assigned to the competing methods (ACA and SI) for a direct error comparison. In the random-grid setting, the same LoCCA-derived rank as in the corresponding uniform-grid experiment is used. Grid generation and the sub-sampling indices were dynamically varied across $500$ independent realizations, and the reported values correspond to the mean relative error across these trials.

To evaluate the numerical stability and sampling sensitivity of \autoref{alg: LoCCA}, we complement the mean-error metrics in \autoref{tab: LoCCA relative error data table with tolerance comparison} by analyzing error variability across random domain discretizations. While mean error values provide a robust baseline, they do not capture fluctuations induced by irregular point clouds. To address this, for each kernel in \eqref{equ: kernel list}, we evaluate the relative approximation error across $500$ independent random grid realizations on $\mclx$ and $\mcly$. The resulting error distributions are summarized using boxplots in \autoref{fig: LoCCA_boxplot_all_kernels_all_tols}.

As shown in \autoref{fig: LoCCA_boxplot_all_kernels_all_tols}, \autoref{alg: LoCCA} exhibits a compact interquartile range with minimal spread and few outliers across all tested target tolerances. This demonstrates that the localized Chebyshev neighbor selection adaptively stabilizes the cross-approximation against sampling irregularities. Across all kernels, LoCCA achieves accuracy and spread comparable to those of Skeletonized Interpolation (SI) \cite{cambier2019fast}, yet does not require structured grid topologies. In contrast, the standard ACA exhibits a noticeably wider error distribution, with extreme outliers on irregular point clouds. These results confirm that LoCCA maintains robust, predictable performance regardless of spatial sampling noise.


To further evaluate the efficiency of our framework relative to the optimal lower bound for low-rank matrix approximations, we compare LoCCA directly with the truncated SVD. We evaluate the decay of relative approximation errors across increasing rank values $r$ for all kernels. As depicted in \autoref{fig: rank vs error for all kernels}, LoCCA closely tracks the optimal SVD error decay, achieving near-optimal approximation.

\begin{figure}[ht]
    \centering
    \includegraphics[width=0.55\linewidth]{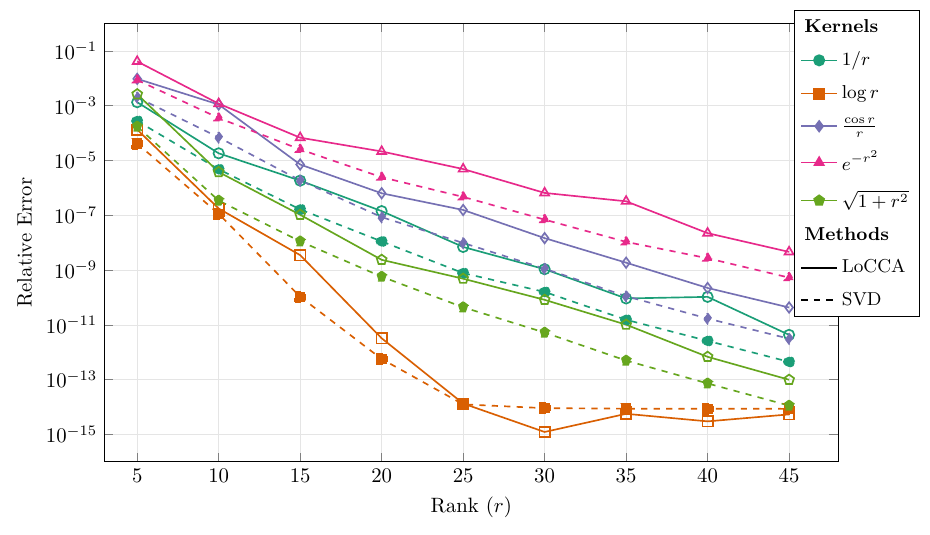}
    \caption{Comparison of relative errors between LoCCA and truncated SVD as a function of approximation rank $r$. The experiments are evaluated over the domains $\mclx = [-3, -1] \times [0, 2]$ and $\mcly = [1, 3] \times [0, 2]$, discretized with a grid size of $N = 85^2$ ($7,225$ points per domain). LoCCA is configured with a univariate Chebyshev resolution $p = 11$ ($P = 121$ tensor nodes) and a neighborhood size $l = 1$. 
    }
    \label{fig: rank vs error for all kernels}
\end{figure}


While \autoref{fig: rank vs error for all kernels} accesses the error decay for fixed ranks, practical applications typically require prescribing a target accuracy tolerance $\varepsilon$ and allowing the cross-approximation algorithm to select the required rank. To evaluate this, we study the performance of LoCCA across target tolerances. \autoref{fig: tol vs error} confirms that the approximation error closely tracks the prescribed tolerance $\varepsilon$ across all kernels, demonstrating the reliability of the truncation threshold. \autoref{fig: tol vs rank} demonstrates the rank $r$ selected by LoCCA against the optimal rank $r_{\text{SVD}}$ required by truncated SVD to achieve the same accuracy. Remarkably, LoCCA yields a numerical rank that closely mirrors $r_{\text{SVD}}$, confirming that our method captures nearly optimal skeletons.

\begin{figure}[ht]
    \centering
    \begin{subfigure}[t]{0.49\linewidth}
        \centering
        \includegraphics[width=0.99\linewidth]{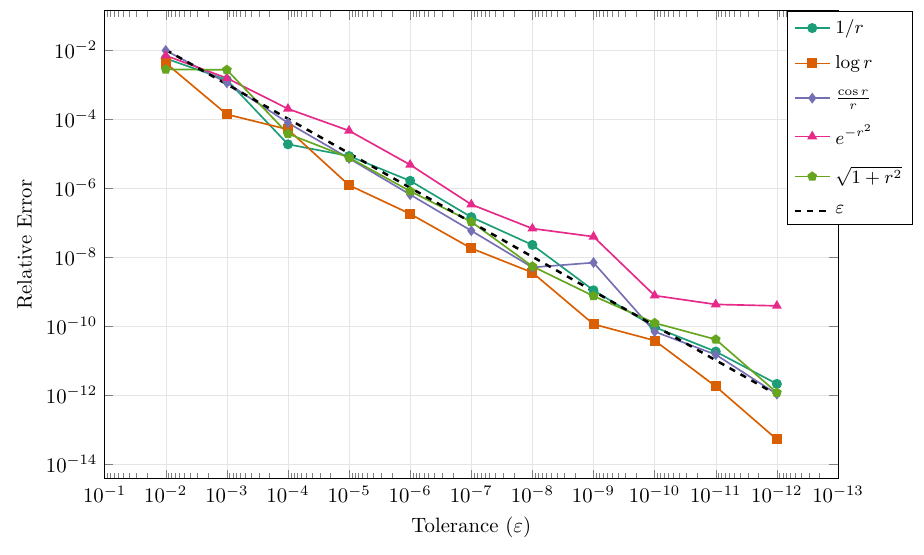}
        \caption{Relative error as a function of target tolerance ($\varepsilon$).}
        \label{fig: tol vs error}
    \end{subfigure}
    \begin{subfigure}[t]{0.49\linewidth}
        \centering
        \includegraphics[width=0.99\linewidth]{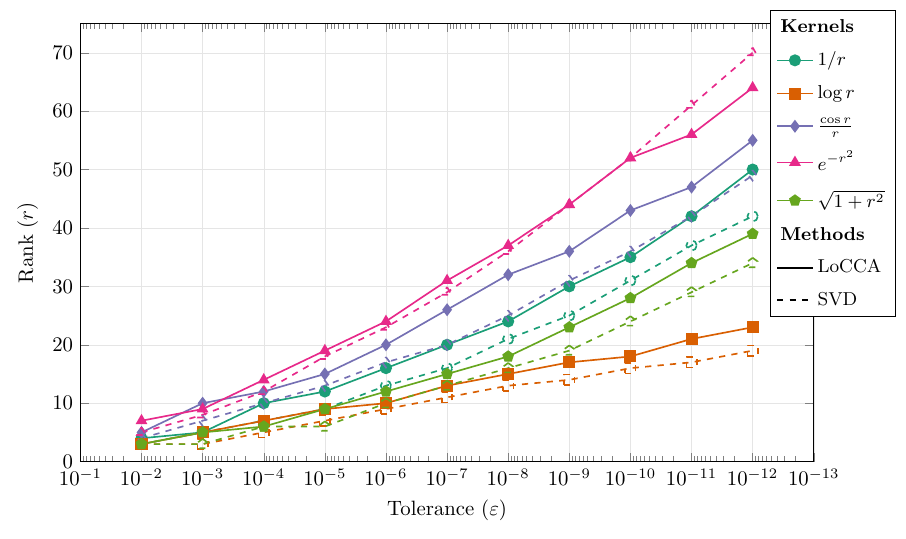}
        \caption{Rank required to achieve tolerance $\varepsilon$, contrasted against optimal SVD rank.}
        \label{fig: tol vs rank}
    \end{subfigure}
    \caption{Numerical convergence and rank stability profiles for grid size $N = 85^2$, univariate Chebyshev resolution $p = 11$ ($P = 121$ tensor nodes), and neighborhood size $l = 1$ over domains $\mathcal{X} = [-3, -1] \times [0, 2]$ and $\mathcal{Y} = [1, 3] \times [0, 2]$.
    }
    \label{fig: tol vs error and rank}
\end{figure}

Having verified that LoCCA reliably satisfies user-prescribed tolerances with near-optimal ranks, we next investigate its sensitivity to internal parameter choices. 
\autoref{fig: l vs relative error LoCCA} and \ref{fig: l vs rank LoCCA} show that increasing the neighborhood size $l$ maintains accuracy without causing rank expansion, confirming that a minimal local sample ($l=1$) is sufficient for optimal performance. \autoref{fig: p vs relative error LoCCA} and \ref{fig: p vs rank LoCCA} illustrate the convergence and rank-scaling behavior as a function of the univariate Chebyshev resolution ($p$), where $p$ denotes the number of Chebyshev nodes along each coordinate axis.

As shown in \autoref{fig: p vs relative error LoCCA}, the relative error decays rapidly across all tested kernels until it reaches the prescribed tolerance $\varepsilon = 10^{-8}$ around $p \approx 7$, after which it levels off. Concurrently, \autoref{fig: p vs rank LoCCA} reveals that the numerical rank $r$ initially increases with $p$ before saturating at the rank needed to satisfy $\varepsilon = 10^{-8}$. These observations emphasize the importance of choosing $p$, while a sufficiently large $p$ unnecessarily increases the cardinality of the candidate sets $\mathcal{N}_\mclx$ and $\mathcal{N}_\mcly$, which consequently increases the factorization cost without providing additional accuracy or rank reduction. \autoref{fig: N vs relative error LoCCA} demonstrates sample-size independence; as the dataset size $N$ increases, the relative error remains consistent, indicating that dataset size does not significantly affect accuracy.


\begin{figure}[ht]
    \centering
    \begin{subfigure}[t]{0.49\linewidth}
        \centering
        \includegraphics[width=0.95\linewidth]{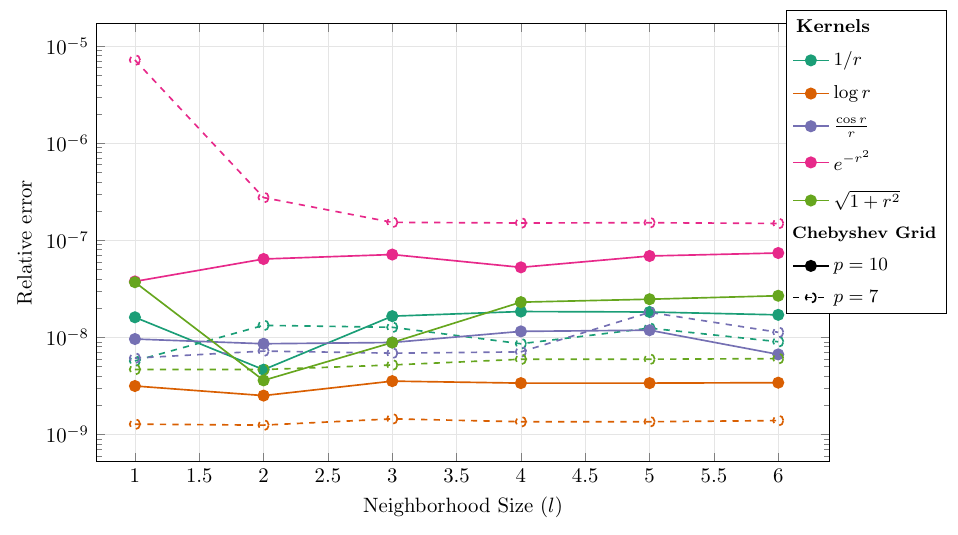}
        \caption{Relative error vs neighborhood size ($l$) ($\varepsilon = 10^{-8}, N = 125^2$).}
        \label{fig: l vs relative error LoCCA}
    \end{subfigure}\hfill
    \begin{subfigure}[t]{0.49\linewidth}
        \centering
        \includegraphics[width=0.92\linewidth]{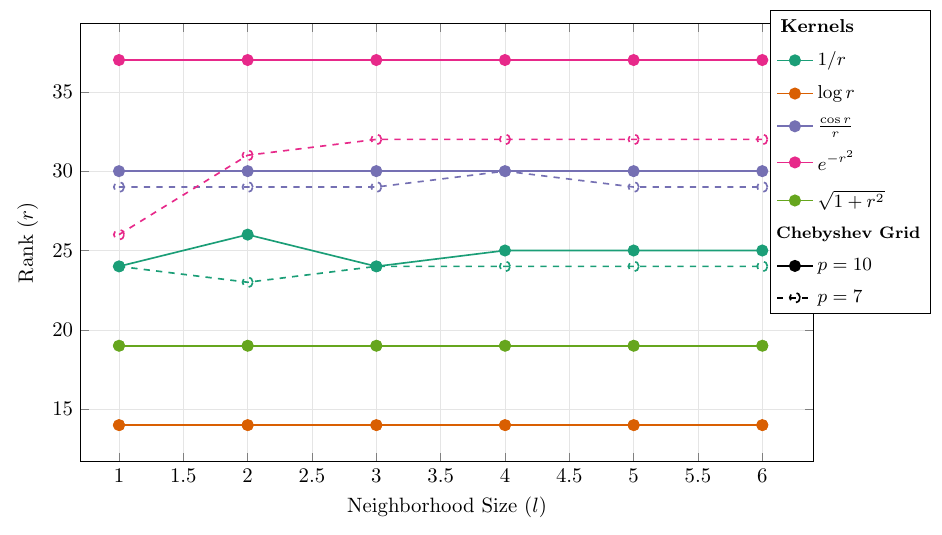}
        \caption{Rank $r$ vs neighborhood size ($l$) ($\varepsilon = 10^{-8}, N = 125^2$).}
        \label{fig: l vs rank LoCCA}
    \end{subfigure}
    \begin{subfigure}[t]{0.49\linewidth}
        \centering
        \includegraphics[width=0.9\linewidth]{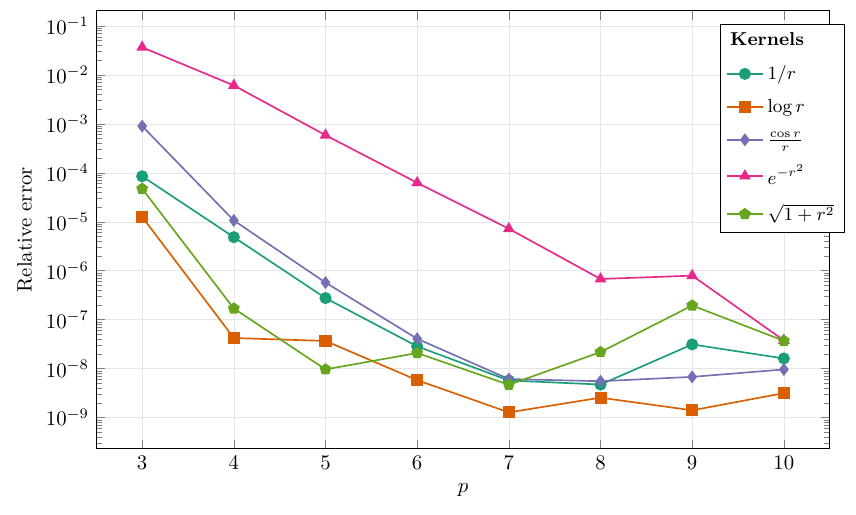}
        \caption{Relative error vs $p$ ($l = 1, \varepsilon = 10^{-8}, N = 125^2$).}
        \label{fig: p vs relative error LoCCA}
    \end{subfigure}\hfill
    \begin{subfigure}[t]{0.49\linewidth}
        \centering
        \includegraphics[width=0.95\linewidth]{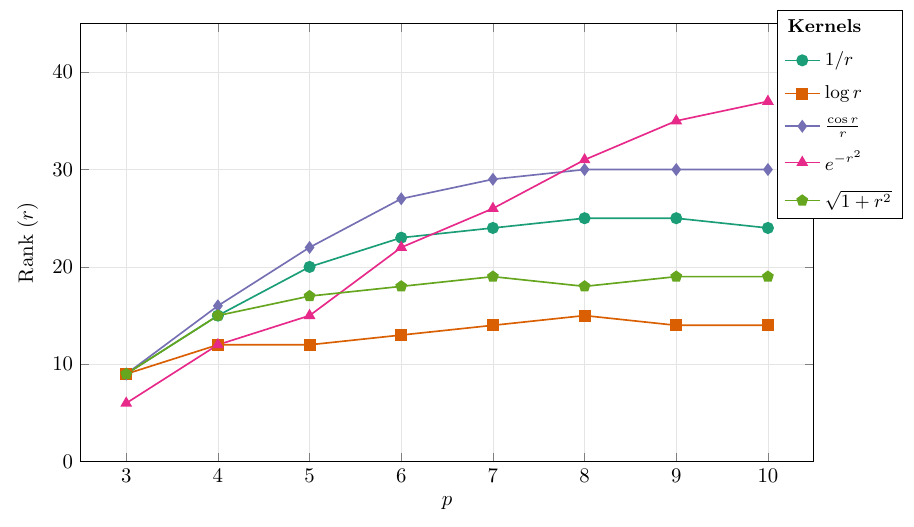}
        \caption{Rank $r$ vs $p$ ($l = 1, \varepsilon = 10^{-8}, N = 125^2$).}
        \label{fig: p vs rank LoCCA}
    \end{subfigure}
    \begin{subfigure}[t]{0.55\linewidth}
        \centering
        \includegraphics[width=\linewidth]{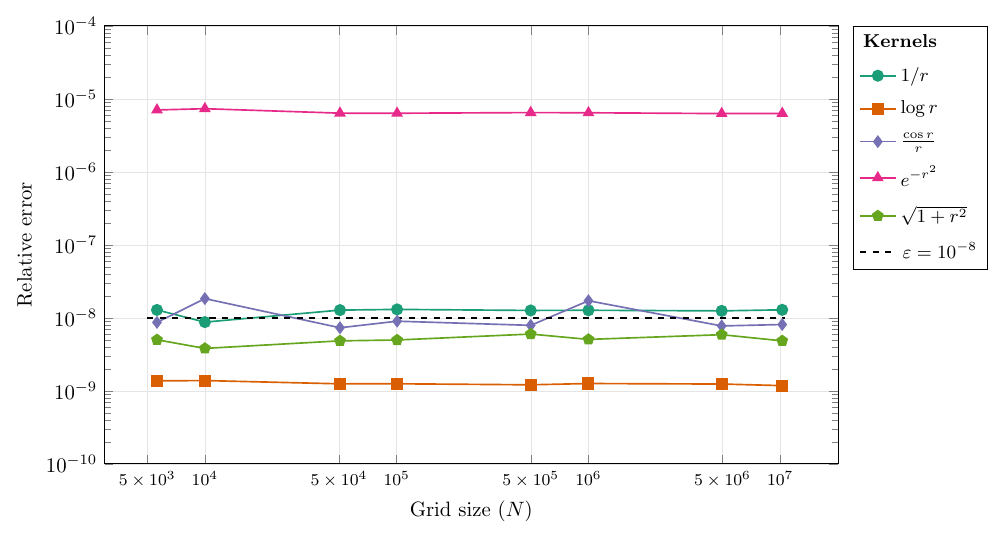}
        \caption{Relative error vs grid size $N$ ($l = 1, P = 7^2, \varepsilon = 10^{-8}$).}
        \label{fig: N vs relative error LoCCA}
    \end{subfigure}
    \caption{Sensitivity and stability analysis of LoCCA across all kernel in \eqref{equ: kernel list} over domains $\mclx = [-3, -1] \times [0, 2]$ and $\mcly = [1, 3] \times [0, 2]$. (a–b) Impact of the neighborhood allocation size ($l$) on relative error and rank $r$ for fixed target tolerance $\varepsilon = 10^{-8}$. (c–d) Convergence decay and rank scaling as a function of univariate Chebyshev nodes per axis ($p$). (e) Numerical stability across expanding point cloud densities ($N$), showing error bounds consistently tracking the target tolerance baseline $\varepsilon = 10^{-8}$.}
    \label{fig: l, p, N vs error graphs LoCCA}
\end{figure}

To assess the practical efficiency and computational scalability of our method, we measure wall-clock execution times across varying total grid sizes ($N$), target rank ($r$) of approximation, and prescribed tolerances ($\varepsilon$). The benchmarks are evaluated for the kernel $1/r$ over the domains $\mclx = [-3, -1] \times [0, 2]$ and $\mcly = [1, 3] \times [0, 2]$ using a univariate Chebyshev resolution $p = 7$ ($P = 49$ tensor nodes per domain) and neighborhood allocation $l = 1$. To eliminate hardware noise, each trial is repeated 10 times, and the mean execution times are reported in \autoref{fig: time experiments}.

\autoref{fig: time vs grid} highlights linear runtime scaling with respect to the grid size $N$, demonstrating that the framework remains computationally tractable even as $N$ scales up to $10^7$. The primary computational cost lies in the localized neighbor selection (LoC-NS) phase (\autoref{alg: loc_nbds}), since LoC-NS depends solely on the spatial-domain discretization. LoC-NS is a \textit{one-time preprocessing step}, once the neighbor sets $\mathcal{N}_\mclx$ and $\mathcal{N}_{\mcly}$ are precomputed for a fixed discretized configuration, they can be reused across varying kernels or target tolerances. The cross-factorization phase of LoCCA (\autoref{alg: LoCCA}) can achieve wall-clock execution time directly comparable to that of Skeletonized Interpolation (SI). Furthermore, while SI requires a Strong RRQR factorization provided by the \texttt{LowRankApprox.jl} package \cite{kenneth_l_ho_2018_1254148}, LoCCA operates efficiently using standard column-pivoted QR factorizations native to Julia. Furthermore, LoCCA consistently achieves approximately a \textit{$10\times$ speedup over standard ACA} across all tested grid scales, while delivering substantially superior approximation stability and error control.


\autoref{fig: time vs rank} and \ref{fig: time vs tol} highlight a structural advantage of LoCCA over ACA. While ACA’s execution time grows significantly with increasing rank $r$ and tighter tolerance $\varepsilon$ due to repeated global row-column searching, the execution time of LoCCA remains remarkably flat and constant. This invariance occurs because LoCCA restricts its rank-revealing QR factorization strictly to a small, fixed-dimensional candidate submatrix of size $P \times P$ ($P = p^d = 49$). 

\begin{figure}[ht]
    \centering
    \begin{subfigure}[t]{0.6\linewidth}
        \centering
        \includegraphics[width=0.85\linewidth]{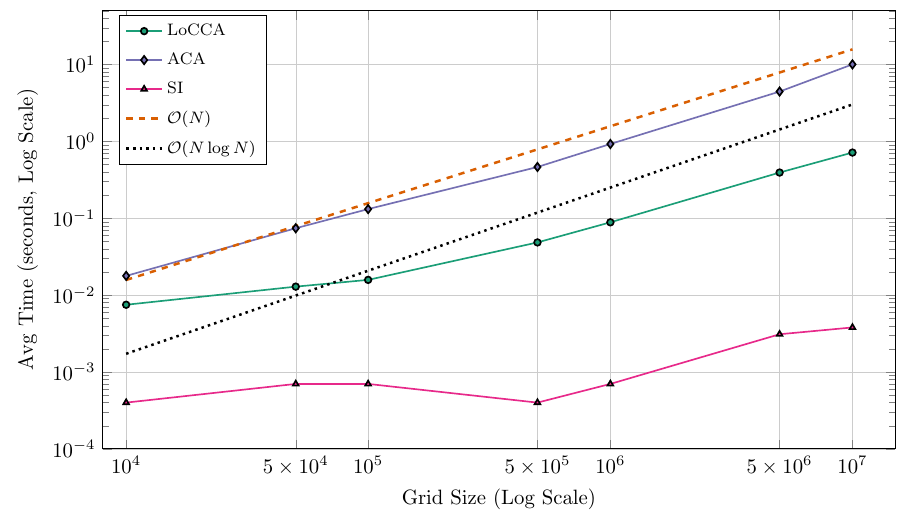}
        \caption{Execution time vs grid size $N$ ($\varepsilon = 10^{-8}$).}
        \label{fig: time vs grid}
    \end{subfigure}
    \begin{subfigure}[t]{0.48\linewidth}
        \centering
        \includegraphics[width=0.95\linewidth]{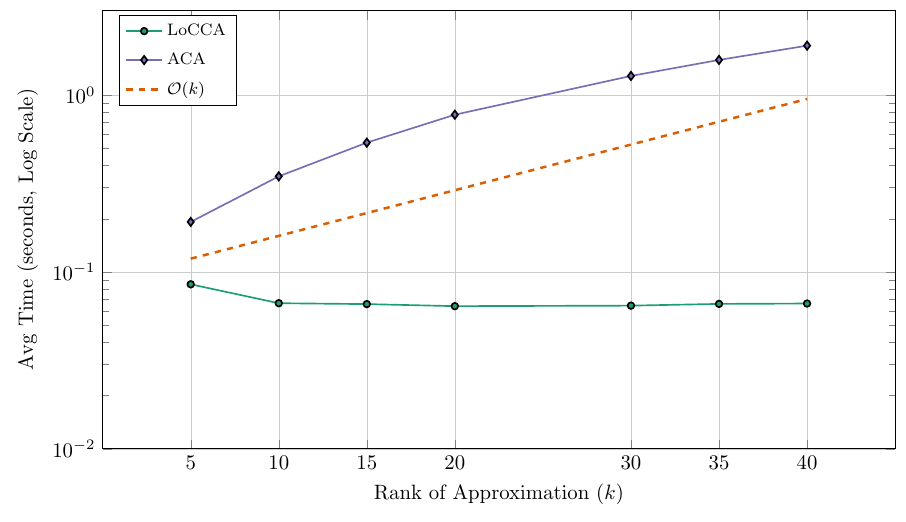}
        \caption{Execution time vs rank $r$ ($N = 10^6, \varepsilon = 10^{-8}$).}
        \label{fig: time vs rank}
    \end{subfigure}
    \begin{subfigure}[t]{0.48\linewidth}
        \centering
        \includegraphics[width=0.95\linewidth]{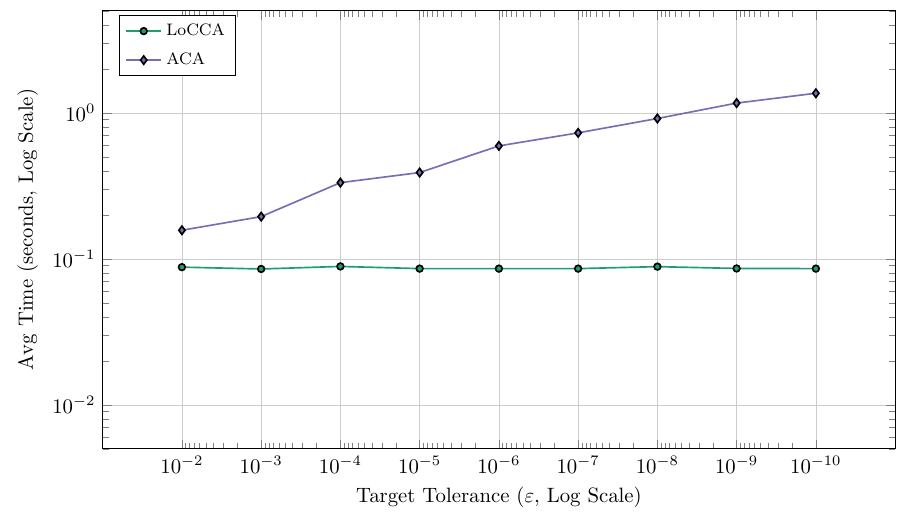}
        \caption{Execution time vs tolerance $\varepsilon$ ($N = 10^6$).}
        \label{fig: time vs tol}
    \end{subfigure}
    \caption{Execution time comparisons  for the $1/r$ kernel over $\mclx = [-3, -1]\times [0,2]$ and $\mcly = [1, 3] \times[0,2]$ ($p = 7, P = 49, l = 1$), averaged over 10 independent trials. (a) Execution time vs. grid size $N$. (b–c) Execution time vs. rank $r$ and tolerance $\varepsilon$ for $N = 10^6$.}
    \label{fig: time experiments}
\end{figure}

To evaluate the geometric robustness of the proposed framework, we benchmark LoCCA on challenging non-convex and non-standard domain configurations where classical grid-bound methods encounter fundamental limitations. Specifically, we study two complex geometric configurations given below.
\begin{enumerate}
    \item \textbf{Crescent-and-Core Domains:} Disconnected, non-convex target ($\mclx$) and source ($\mcly$) geometries consisting of outer crescent arcs coupled with asymmetric inner core disks (\autoref{fig: arc_dot_domains_data}).
    \item \textbf{Concentric Ring-and-Core Domains:} A central circular target disk $\mclx$ fully enclosed by an outer annular source ring $\mcly$ (\autoref{fig: concentric_domains_data}).
\end{enumerate}

\begin{figure}[H]
    \centering
    \begin{subfigure}[t]{0.3\linewidth}
        \includegraphics[width=0.87\linewidth]{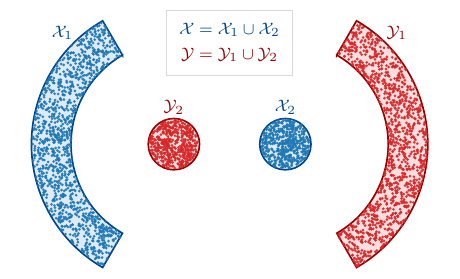}
        \caption{The Crescent-and-Core Domain layout}
        \label{fig: arc dot domains}
    \end{subfigure}\\
    \begin{subfigure}[t]{0.48\linewidth}
        \centering
        \includegraphics[width=0.87\linewidth]{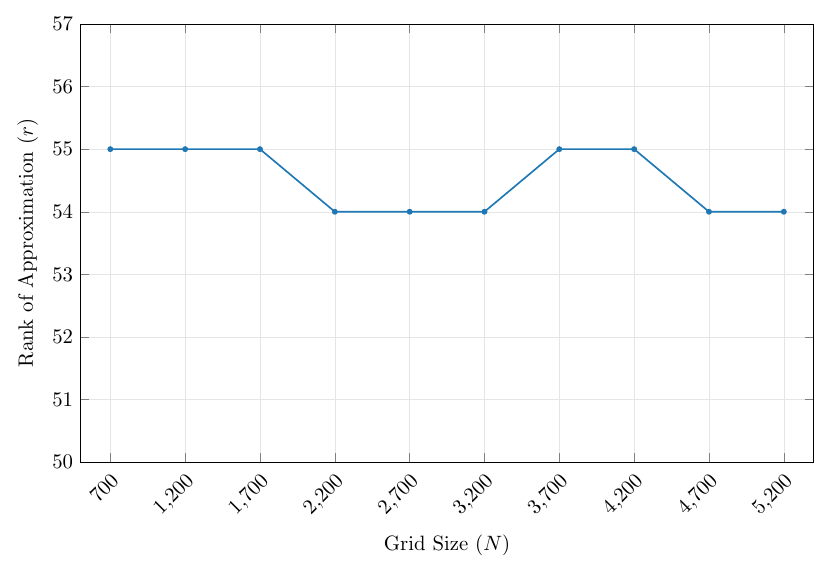}
        \caption{Grid size(N) vs rank of approximated matrix}
        \label{fig: arc dot grid vs rank}
    \end{subfigure}
    \begin{subfigure}[t]{0.48\linewidth}
        \centering
        \includegraphics[width=\linewidth]{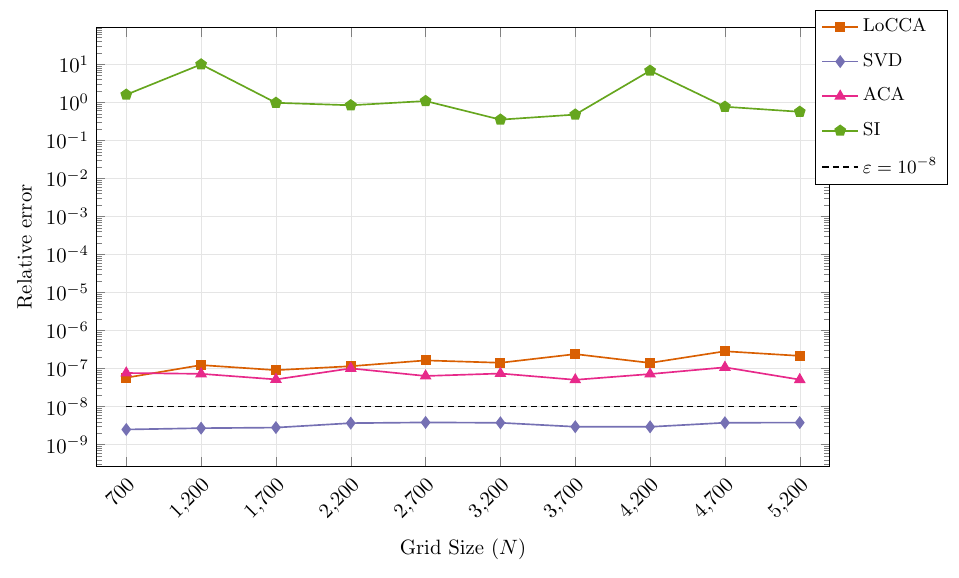}
        \caption{Grid size(N) vs relative error}
        \label{fig: arc dot grid vs relative error}
    \end{subfigure}
    \caption{Numerical benchmarking on disconnected, non-convex Crescent-and-Core geometries for the kernel $1/r$ ($ P = 289$ candidate nodes, $l = 1$, target tolerance $\varepsilon = 10^{-8}$). (a) Spatial layout of the disconnected target ($\mclx$, blue) and source ($\mcly$, red) point clouds. (b) Approximated numerical rank $r$ as a function of total grid size $N$. (c) Relative error for different grid size $N$. 
    }
    \label{fig: arc_dot_domains_data}
\end{figure}

\begin{figure}[ht]
    \centering
    \begin{subfigure}[t]{0.22\linewidth}
        \includegraphics[width=0.87\linewidth]{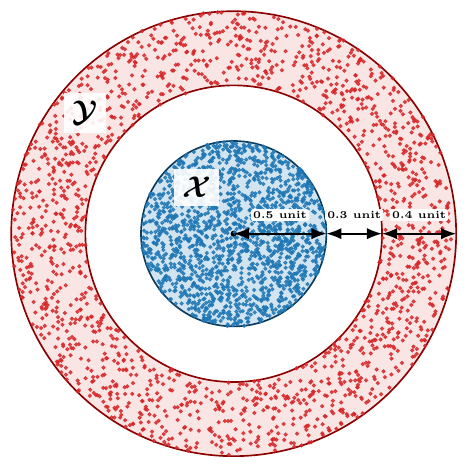}
        \caption{Concentric Ring-and-Core domain}
        \label{fig: concentric domains}
    \end{subfigure}\\
    \begin{subfigure}[t]{0.48\linewidth}
        \centering
        \includegraphics[width=0.87\linewidth]{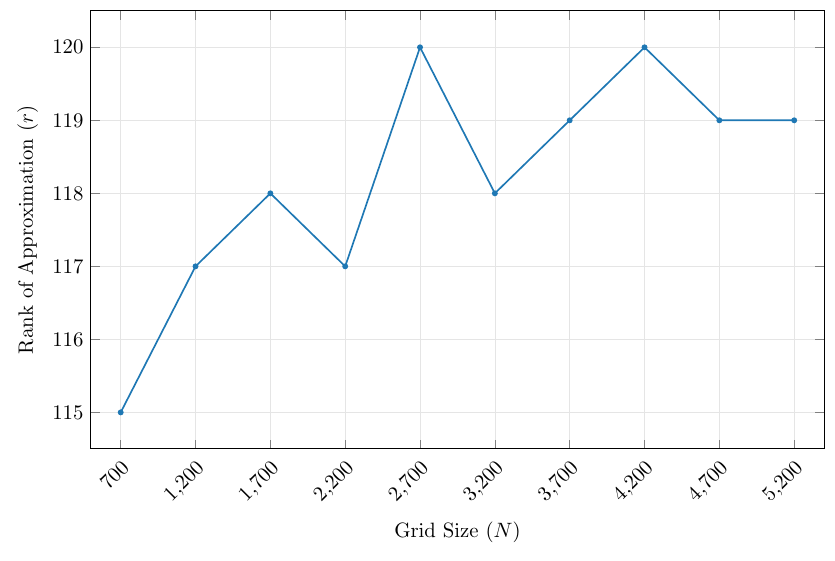}
        \caption{Grid size(N) vs rank of approximated matrix}
        \label{fig: grid vs rank}
    \end{subfigure}
    \begin{subfigure}[t]{0.48\linewidth}
        \centering
        \includegraphics[width=\linewidth]{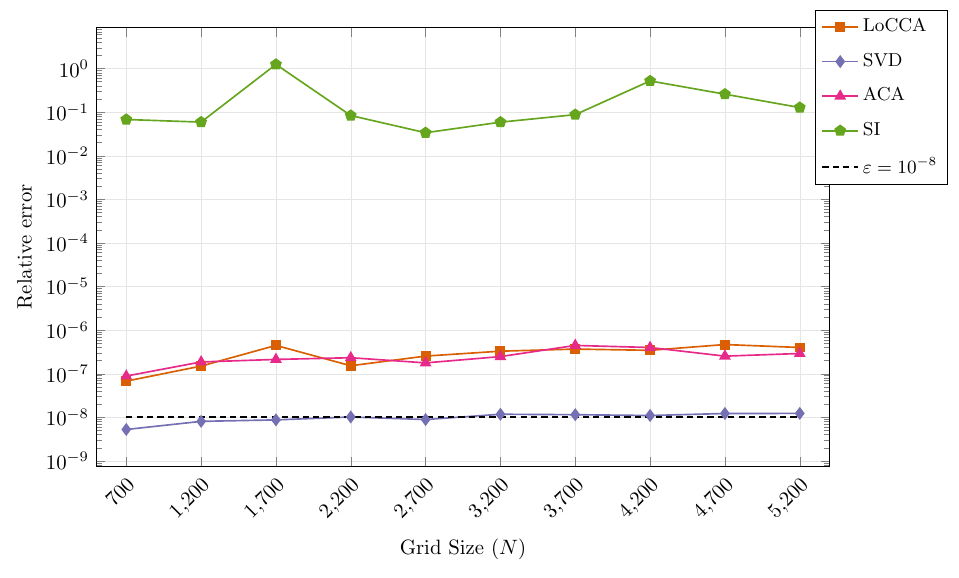}
        \caption{Grid size(N) vs relative error}
        \label{fig: grid vs relative error}
    \end{subfigure}
    \caption{Numerical benchmarking on Concentric Ring-and-Core geometries ($1/r$ kernel, $P = 289$, $l = 1$, target tolerance $\varepsilon = 10^{-8}$). Target domain $\mclx$ is a central disk ($r = 0.5$) enclosed by a concentric annular source ring $\mcly$ ($r_{\text{in}} = 0.8, r_{\text{out}} = 1.2$). (a) Spatial domain configuration. (b) Approximated rank scaling across expanding grid sizes $N$. (c) Relative error comparison.
    }
    \label{fig: concentric_domains_data}
\end{figure}

These non-standard geometries illustrate a primary failure mode of Skeletonized Interpolation (SI). Because SI relies on structured tensor-product Chebyshev grids defined over bounding hypercubes, its interpolation nodes inevitably sample empty, unphysical space or regions where the kernel is nearly singular. Consequently, as shown in \autoref{fig: arc dot grid vs relative error} and \autoref{fig: grid vs relative error}, SI fails to achieve accurate approximations on these configurations, exhibiting relative errors near $\mathcal{O}(1)$. 

In contrast, LoCCA bridges continuous reference grids and unstructured point clouds by mapping candidate anchors strictly to the physically nearest neighbors within the domain. Across both complex benchmark geometries, LoCCA maintains stable error control as grid resolution $N$ increases.

\section{Conclusion}\label{sec: conclusion}

In this paper, we introduced the Localized Chebyshev Cross Approximation (LoCCA) method, a data-driven and geometry-aware low-rank approximation technique for kernel matrices. By mapping continuous tensor-product Chebyshev nodes onto discrete physical nearest neighbors via LoC-NS, LoCCA bridges continuous function approximation theory and discrete algebraic skeletonization without requiring rigid tensor-product grid structures.

On the theoretical front, we established a comprehensive perturbation theory for Chebyshev polynomial interpolation under localized coordinate displacements. We proved an unconditional, a priori upper bound on the perturbed Lebesgue constant (\autoref{thm: unconditional bound}), demonstrating that localized spatial displacements preserve operator conditioning without catastrophic norm growth. We established sharp deterministic bounds on the deviation between unperturbed and perturbed Lagrange interpolants (\autoref{thm: Stability under node perturbations}). Specifically, we proved that $\Vert{}I_p f - \tilde{I}_p f\Vert{}_\linf$ remains uniformly stable, scaling linearly with the maximum node displacement $\delta$ with a growth constant depending on $p$ for any $f \in C^1[-1,1]$.
Leveraging directional tensor-product and telescoping expansions, we extended these stability bounds to $d$-dimensional continuous hypercubes $\Omega = [-1,1]^d$. Under the assumption of Strong RRQR factorization, we presented a global cross-approximation error convergence result (\autoref{thm: LoCCA convergence theorem}) that guarantees deterministic error control down to a prescribed tolerance $\varepsilon$ on physical point clouds.

Complementing the analytical results, our framework, comprising the Localized Chebyshev Node Selection (LoC-NS) and the cross-approximation, demonstrates computational efficiency and geometric resilience across numerical benchmarks. In practice, LoCCA isolates low-rank matrix skeletons whose ranks closely mirror the minimal theoretical rank $r_{\mathrm{SVD}}$ prescribed by optimal truncated SVD. LoCCA achieves approximately a $10\times$ speedup over ACA, and execution time remains remarkably flat and invariant with respect to target rank $r$ and precision tolerance $\varepsilon$. In non-convex and disconnected geometries, SI experiences catastrophic breakdown (a $\mathcal{O}(1)$ relative error); by contrast, LoCCA maintains strict error control within the given tolerance.

While \autoref{thm: unconditional bound} proves $\tilde{\Lambda}_p \le p^C \Lambda_p$, numerical experiments suggest that the polynomial factor $p^C$ is a surplus estimate. Formally proving \autoref{rmk: sharp_lebesgue_conjecture} to establish the sharp logarithmic asymptotic growth $\tilde{\Lambda}_p = \mathcal{O}(\log p + \delta)$ remains an interesting open problem.

\appendix

\section{Some Auxiliary Results used in the Article}

\begin{mylma}\label{lma: weight-ratio-bound}
Let $\omega_{\mathbf{t}}(x) = \prod_{j=1}^p (x - t_j)$ denote the node-generating polynomial associated with a node vector $\mathbf{t} \in [-1,1]^p$, and let $\boldsymbol{\xi} = [\xi_1, \dots, \xi_p]^T$ be the Chebyshev nodes defined by \eqref{equ: cheb nodes}. Suppose the intermediate node configuration $\boldsymbol{\eta} = [\eta_1, \dots, \eta_p]^T \in [-1,1]^p$ satisfies $|\eta_j - \xi_j| < \delta$ for all $j = 1, \dots, p$. Then, under the uniform threshold constraint \eqref{equ: delta strict threshold} $\delta \le m/4$, the ratio of the polynomial derivative weights of $\omega_{\boldsymbol{\eta}}(x)$ satisfies 
\[
    \left| \frac{\omega'_{\boldsymbol{\eta}}(\eta_k)}{\omega'_{\boldsymbol{\eta}}(\eta_i)} \right| \le C_0 p^\alpha, \qquad \text{for all } i,k = 1, \dots, p,
\]
where $C_0$ and $\alpha$ are uniform constants independent of $p$, $i$, and $k$.
\end{mylma}

\begin{proof}
Let $\omega_{\mathbf{t}}(x) = \prod_{j=1}^p (x - t_j)$ denote the node-generating polynomial associated with a parameter $\mathbf{t} \in [-1,1]^p$. Under the intermediate state $\mathbf{t} = \boldsymbol{\eta}$, we define $\omega_{\boldsymbol{\eta}}(x) = \prod_{j=1}^p (x - \eta_j)$, whereas $\omega_{\boldsymbol{\xi}}(x) = \prod_{j=1}^p (x - \xi_j)$ represents the Chebyshev node-generating polynomial. Now, since $|\eta_j - \xi_j| < \delta$ where $\delta$ satisfies \eqref{equ: delta strict threshold}, from \eqref{equ: eta node separation}, we can ensure that the elements of $\boldsymbol{\eta}$ remain strictly distinct, yielding the derivative weight $|\omega'_{\boldsymbol{\eta}}(\eta_k)| = \prod_{j \neq k} |\eta_k - \eta_j|$. Now, the derivative weights satisfy
\begin{equation}\label{equ: log of wiight coefficient ratio}
    \log \frac{|\omega_{\boldsymbol{\xi}}'(\xi_k)|}{|\omega'_{\boldsymbol{\eta}}(\eta_k)|} = \sum_{\substack{j=1 \\ j \neq k}}^p \log \frac{|\xi_k - \xi_j|}{|\eta_k - \eta_j|}.
\end{equation}
Using separation margin $|\eta_k - \eta_j| \ge \frac{1}{2}|\xi_k - \xi_j|$ and applying the triangle inequality yields $\frac{|\xi_k - \xi_j|}{|\eta_k - \eta_j|} \le 1 + \frac{|\eta_k - \xi_k| + |\eta_j - \xi_j|}{|\eta_k - \eta_j|}$. Now as $\log(1+t) \le t$, then using the proximity condition $|\eta_m - \xi_m| < \delta$, we have 
\[
\log \frac{|\xi_k - \xi_j|}{|\eta_k - \eta_j|} \le \frac{|\eta_k - \xi_k| + |\eta_j - \xi_j|}{|\eta_k - \eta_j|} \le \frac{2\delta}{\frac{1}{2}|\xi_k - \xi_j|} = \frac{4\delta}{|\xi_k - \xi_j|}.
\]
Now, summing the above for $j = 1 : p$ with $j\neq k$ for some $k$ and then using \autoref{lma: chebyshev-harmonic} and the spacing constraint $\delta p^2 \le C_2$ from \eqref{equ: log of wiight coefficient ratio} we have 
\[
    \log \frac{|\omega_{\boldsymbol{\xi}}'(\xi_k)|}{|\omega'_{\boldsymbol{\eta}}(\eta_k)|} \le 4\delta \sum_{\substack{j=1 \\ j \neq k}}^p \frac{1}{|\xi_k - \xi_j|} \le 4 C_H \delta p^2 \log p \le 4 C_H C_2 \log p \implies \frac{|\omega_{\boldsymbol{\xi}}'(\xi_k)|}{|\omega'_{\boldsymbol{\eta}}(\eta_k)|} \le p^{4 C_H C_2} = p^{C_k}
\]

Similarly, for the reciprocal quotient, we can show $\frac{|\omega_{\boldsymbol{\xi}}'(\xi_i)|}{|\omega'_{\boldsymbol{\eta}}(\eta_i)|} \le p^{C_i}$ for a uniform constant $C_i > 0$. Now, the required weight ratio can be written as follows
\begin{equation}\label{equ: single_weight_poly_bound}
    \left| \frac{\omega'_{\boldsymbol{\eta}}(\eta_k)}{\omega'_{\boldsymbol{\eta}}(\eta_i)} \right| =  \frac{|\omega'_{\boldsymbol{\eta}}(\eta_k)|}{|\omega_{\boldsymbol{\xi}}'(\xi_k)|} \cdot \frac{|\omega_{\boldsymbol{\xi}}'(\xi_i)|}{|\omega'_{\boldsymbol{\eta}}(\eta_i)|} \cdot \left| \frac{\omega_{\boldsymbol{\xi}}'(\xi_k)}{\omega_{\boldsymbol{\xi}}'(\xi_i)} \right|.
\end{equation}
As we already know, the first two components of \eqref{equ: single_weight_poly_bound} scale as $p^{\mclo{1}}$ and \eqref{equ: cheb weight ratio bound}, establishing a stable uniform polynomial bound $C_0 p^\alpha$ with $\alpha = C_k +C_i + 1 > 0$.
\end{proof}

\begin{myremark}\label{rmk: cheb derivatives}
The monic node-generating polynomial associated with the Chebyshev roots satisfies $\omega_{\boldsymbol{\xi}}(x) = 2^{1-p} T_p(x)$. Differentiating using the trigonometric mapping $x = \cos\theta$ and $T_p(\cos\theta) = \cos(p\theta)$ yields
\begin{equation}
    T_p'(x) = \frac{\mathrm{d}}{\mathrm{d}\theta}[\cos(p\theta)] \frac{\mathrm{d}\theta}{\mathrm{d}x} = \frac{-p\sin(p\theta)}{-\sin\theta} = p \frac{\sin(p\theta)}{\sin\theta}.
\end{equation}
Now at $\xi_m = \cos \theta_m$ where $p\theta_m = (2m-1)\frac{\pi}{2}$ implies $|\sin(p\theta_m)| = 1$. It follows that
\begin{equation}
    |\omega_p'(\xi_m)| = \frac{p}{2^{p-1}\sqrt{1 - \xi_m^2}} \implies  \frac{|\omega_p'(\xi_k)|}{|\omega_p'(\xi_i)|} = \frac{\sqrt{1-\xi_i^2}}{\sqrt{1-\xi_k^2}}.
\end{equation}
As the denominator attains its minimum at $k=1$ or $k=p$, so at $\theta_1 = \frac{\pi}{2p}$, using Jordan's inequality we have $\sqrt{1-\xi_1^2} = \sin(\pi/2p) \ge {1}/{p}.$ 
Since $\sqrt{1-\xi_i^2} \le 1$ globally on $[-1,1]$, we conclude 
\begin{equation} \label{equ: cheb weight ratio bound}
    \frac{|\omega_p'(\xi_k)|}{|\omega_p'(\xi_i)|} \le \frac{1}{1/p} = p.
\end{equation}
\end{myremark}

\section{Numerical Investigation of Perturbed Chebyshev Interpolation Stability on Runge's Function}

To empirically validate the operator stability bounds proved in \autoref{sec: Theoretical Analysis}, we examine the performance of perturbed Chebyshev interpolation on the Runge function.
\begin{equation}
    f(x) = \frac{1}{1 + 25x^2}, \quad x \in [-1, 1].
    \label{equ:runge_function}
\end{equation}

The primary objective of this is to check whether displacing Chebyshev nodes by the admissible perturbation $\delta \le {m}/{4}$ preserves operator stability and retains the optimal asymptotic convergence rate as established by \autoref{thm: unconditional bound} and \autoref{thm: Stability under node perturbations}.

\begin{figure}[htbp]
    \centering
    \includegraphics[width=\textwidth]{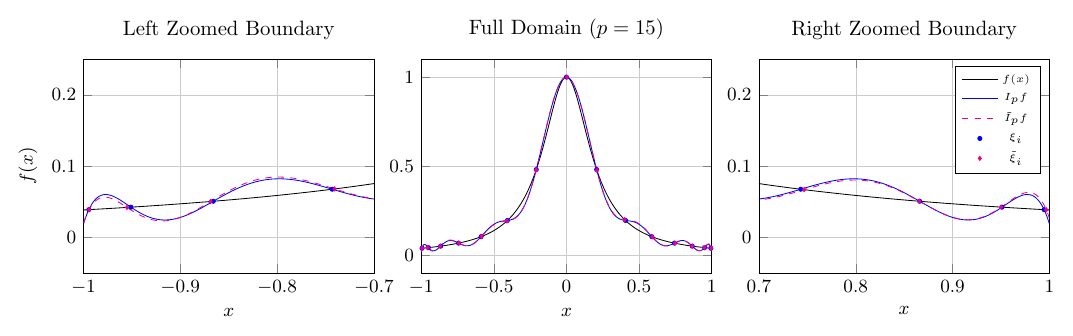}
    \caption{Interpolation of $f(x)$ with $p = 15$ nodes. The center panel displays the full domain $x \in [-1, 1]$, while the left and right panels detail boundary behavior at $x \in [-1.0, -0.7]$ and $x \in [0.7, 1.0]$, respectively. 
    }
    \label{fig: runge_interpolation}
\end{figure}

As illustrated in \autoref{fig: runge_interpolation}, polynomial interpolation on perturbed Chebyshev nodes remains well-conditioned across the entire domain $[-1, 1]$. In particular, in the zoomed boundary panels, we can see that the distance between the unperturbed interpolant $I_p f$ and the perturbed interpolant $\tilde{I}_p f$ remains tightly controlled by the perturbation $\delta$, which provides visual confirmation of \autoref{thm: Stability under node perturbations}. 

\begin{figure}[htbp]
    \centering
    \begin{subfigure}[t]{0.49\linewidth}
        \includegraphics[width=0.95\linewidth]{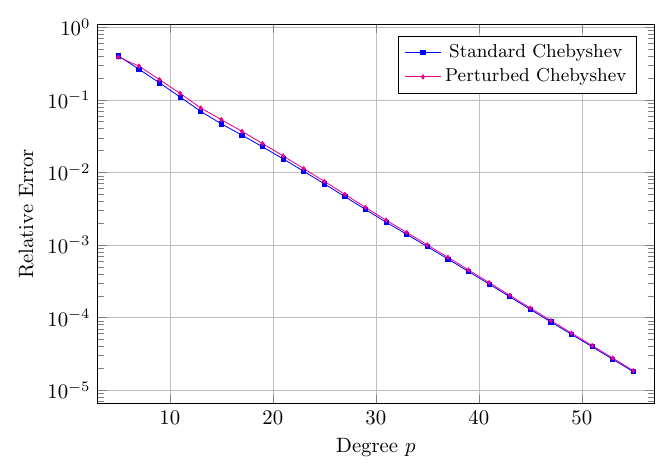}
        \caption{Decay of Chebyshev and Perturbed interpolants.}
        \label{fig: runge_decay}
    \end{subfigure}
    \begin{subfigure}[t]{0.49\linewidth}
        \includegraphics[width=0.95\linewidth]{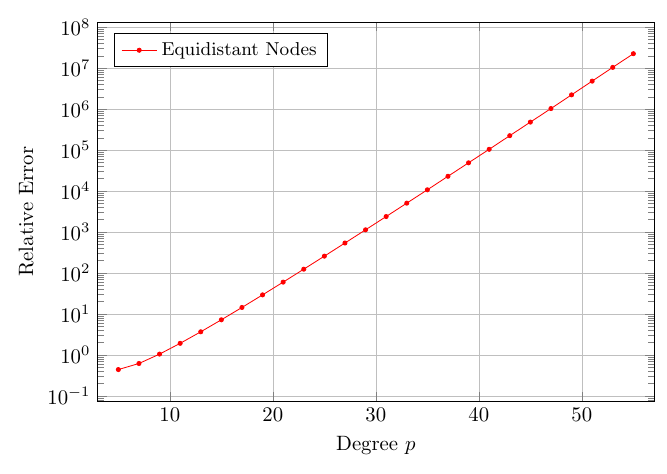}
        \caption{Divergence due to equidistant nodes.}
        \label{fig: runge_divergence_comparison}
    \end{subfigure}
    \caption{Relative error convergence due to interpolation for $p \in [5, 55]$ number of nodes.}
    \label{fig: runge_convergence_study}
\end{figure}

\autoref{fig: runge_convergence_study} provides quantitative verification of theoretical estimates. As depicted in \autoref{fig: runge_divergence_comparison}, interpolation due to equidistant nodes suffers from exponential error explosion due to the growth of the Lebesgue constant $\Lambda_p^{\text{eq}} $, which is of $\mclo{{2^p}/{e p \log p}}$ \cite{hesthaven1998electrostatics}. \autoref{fig: runge_decay} shows that the perturbed interpolant $\tilde{I}_p f$ closely tracks the unperturbed Chebyshev interpolant $I_p f$. This provides direct empirical validation of \autoref{thm: unconditional bound}, verifying that the perturbed Lebesgue constant remains polynomial-logarithmically bounded.  
    


\section{Tabular Benchmarks on Vertex-Sharing Domains} 

In this section, we present \autoref{tab: locca_performance_time_tolerance_comparison} showing relative errors and average execution times for kernels listed in \eqref{equ: kernel list} for \textit{vertex-sharing domain setup} $\mclx = [-1,1]\times[0,2] $ and $ \mcly = [1,3]\times[2,4],$ where the target and source domains share the vertex at $(1,2)$. 

\begin{table}[ht]
\centering
\resizebox{0.8\textwidth}{!}{%
\begin{tabular}{llcccccc}
\toprule
& & \multicolumn{3}{c}{\textbf{Relative Error} \textit{(Uniform Grid)}} & \multicolumn{3}{c}{\textbf{Avg. Time} \textit{(seconds, 10 trials)}} \\
\cmidrule(lr){3-5} \cmidrule(lr){6-8}
\textbf{Kernel} & \textbf{Method}
& $\varepsilon=10^{-4}$
& $\varepsilon=10^{-6}$
& $\varepsilon=10^{-8}$
& $\varepsilon=10^{-4}$
& $\varepsilon=10^{-6}$
& $\varepsilon=10^{-8}$ \\
\midrule

\multirow{4}{*}{$1/r$}
& Rank $(r)$ &25 &46 &73 & -- & -- & -- \\
& LoCCA      &$7.926707\times 10^{-4}$ &$1.796203\times 10^{-5}$ &$5.836513\times 10^{-7}$ & $1.006677$ & $0.993188$ & $1.065765$ \\
& ACA        &$7.610625\times 10^{-3}$ &$1.496272\times 10^{-4}$ &$1.589126\times 10^{-5}$ & $0.432318$ & $1.038314$ & $1.915775$ \\
& SI         &$2.784097\times 10^{-4}$ &$3.899703\times 10^{-6}$ &$2.869157\times 10^{-7}$ & $0.941954$ & $0.930513$ & $0.986161$ \\
\midrule

\multirow{4}{*}{$\log r$}
& Rank $(r)$ &20 &31 &42 & -- & -- & -- \\
& LoCCA      &$2.403002\times 10^{-5}$ &$1.092055\times 10^{-7}$ &$2.259749\times 10^{-9}$ & $0.985694$ & $1.003111$ & $0.934361$ \\
& ACA        &$7.862198\times 10^{-4}$ &$2.934027\times 10^{-6}$ &$4.321152\times 10^{-8}$ & $0.470508$ & $0.757769$ & $1.119425$ \\
& SI         &$4.139327\times 10^{-5}$ &$4.043177\times 10^{-8}$ &$1.540308\times 10^{-9}$ & $0.897036$ & $0.945746$ & $0.924446$ \\
\midrule

\multirow{4}{*}{$\dfrac{\cos r}{r}$}
& Rank $(r)$ &29 &50 &74 & -- & -- & -- \\
& LoCCA      &$2.772655\times 10^{-3}$ &$2.348328\times 10^{-5}$ &$7.945583\times 10^{-7}$ & $1.109902$ & $1.097676$ & $1.105714$ \\
& ACA        &$3.841560\times 10^{-3}$ &$3.272173\times 10^{-4}$ &$1.109720\times 10^{-5}$ & $0.931978$ & $1.849091$ & $3.042118$ \\
& SI         &$1.918470\times 10^{-3}$ &$7.408377\times 10^{-6}$ &$4.339765\times 10^{-7}$ & $1.040185$ & $0.999815$ & $1.037307$ \\
\midrule

\multirow{4}{*}{$e^{-r^2}$}
& Rank $(r)$ &16 &29 &45 & -- & -- & -- \\
& LoCCA      &$4.024449\times 10^{-4}$ &$6.609316\times 10^{-6}$ &$4.224703\times 10^{-8}$ & $1.196488$ & $1.164910$ & $1.146536$ \\
& ACA        &$1.970275\times 10^{-3}$ &$3.371133\times 10^{-6}$ &$9.218742\times 10^{-8}$ & $0.588015$ & $1.073603$ & $1.921577$ \\
& SI         &$1.589025\times 10^{-4}$ &$7.548823\times 10^{-6}$ &$5.764549\times 10^{-8}$ & $1.102922$ & $1.126818$ & $1.102062$ \\
\midrule

\multirow{4}{*}{$\sqrt{1+r^2}$}
& Rank $(r)$ &10 &19 &31 & -- & -- & -- \\
& LoCCA      &$5.865750\times 10^{-4}$ &$7.470466\times 10^{-6}$ &$1.768371\times 10^{-8}$ & $1.015719$ & $1.017417$ & $1.025954$ \\
& ACA        &$2.215989\times 10^{-4}$ &$1.150627\times 10^{-6}$ &$2.585513\times 10^{-8}$ & $0.372572$ & $0.715170$ & $1.196391$ \\
& SI         &$1.185676\times 10^{-4}$ &$1.415365\times 10^{-7}$ &$4.444030\times 10^{-9}$ & $0.921868$ & $0.939795$ & $0.970981$ \\
\bottomrule
\end{tabular}
}

\caption{Comparison of relative errors and execution times (averaged over 10 trials) on the vertex-sharing domain setup $\mclx = [-1,1]\times[0,2]$ and $\mcly = [1,3]\times[2,4]$ touching at corner $(1,2)$, discretized at $N = 707^2 \approx 5 \times 10^5$ points per domain.  The rank $r$ determined by LoCCA is prescribed to ACA and SI for direct comparative analysis. LoCCA $p = 45$ ($P = 2025$ nodes) and $l = 1$. Here $p$ is chosen for $\varepsilon= 10^{-8}$, a better choice of $p$ for $\varepsilon= 10^{-4}$ and $10^{-6}$ can give better time of execution of LoCCA than ACA.}

\label{tab: locca_performance_time_tolerance_comparison}
\end{table}

\section{Spectral Decomposition using Localized Chebyshev Neighbors}

While LoCCA provides a highly efficient cross-approximation, the localized sampling framework presented in \autoref{alg: loc_nbds} is adaptable to other algebraic paradigms. Certain scientific computing applications (such as PCA, reduced-order modeling, and spectral filtering) require strictly orthogonal basis vectors alongside true singular values. To address this, we present a secondary structural utility of our framework: the \textit{Localized Chebyshev-Accelerated Truncated Singular Value Decomposition} (LoC-TSVD). \autoref{alg: LoC-TSVD} utilizing $\mathcal{N}_{\mclx}$ and $\mathcal{N}_{\mcly}$, constructs a compressed sub-matrix representation, and subsequently extracts optimal, orthonormal rank-$r$ spectral factors via economy-sized QR factorizations and SVD, requiring $\mathcal{O}(P^2N)$ time and $\mathcal{O}(PN)$ space complexity.

\begin{algorithm}[ht]
\caption{Localized Chebyshev-Accelerated Truncated SVD (LoC-TSVD)}
\label{alg: LoC-TSVD}
\begin{algorithmic}[1]
\State \textbf{Input:} Grids $X \subset \mclx$, $Y \subset \mcly$, kernel $\mclk$, $P$, $l$, tolerance $\varepsilon$
\State \textbf{Output:} $U_r \in \mathbb{R}^{n\times r}$, $\Sigma_r \in \mathbb{R}^{r\times r}$, $V_r \in \mathbb{R}^{m\times r}$ and effective rank $r$

\State $[\mathcal{N}_\mclx, \mathcal{N}_\mcly] \leftarrow \text{\textbf{Call \autoref{alg: loc_nbds}}}(X, Y, P, l)$ 

\State Construct: 
\[
    \mathbf{K}_x = \mathcal{K}(X, \mathcal{N}_\mcly), \quad 
    \mathbf{K}   = \mathcal{K}(\mathcal{N}_\mclx, \mathcal{N}_\mcly), \quad 
    \mathbf{K}_y = \mathcal{K}(\mathcal{N}_\mclx, Y)
\]

\State Compute thin QR factorizations: 
\[ \mathbf{K}_x = Q_1 R_1, \qquad \mathbf{K}_y^T = Q_2 R_2 \]

\State Compute the small core matrix: 
\[ \widehat{\mathbf{K}} = R_1 \mathbf{K}^{-1}R_2^{T} \] 

\State Compute the SVD: 
\[ \widehat{\mathbf{K}} = \widetilde{U}\Sigma \widetilde{V}^T \]

\State Determine effective rank: $r = \#\{ \sigma_i : \sigma_i \ge \varepsilon \sigma_1\}$ 

\State Retain the leading $r$ components $\widetilde{U}_r, \Sigma_r, \widetilde{V}_r$ and compute: 
\[ U_r = Q_1 \widetilde{U}_r, \quad V_r = Q_2 \widetilde{V}_r \]

\State \Return $\mathcal{K}(X,Y) \approx  U_r \, \Sigma_r\, V_r^T$
\end{algorithmic}
\end{algorithm}

To empirically validate the LoC-TSVD (\autoref{alg: LoC-TSVD}), we compare its numerical performance against the exact truncated SVD. As summarized in \autoref{tab: LoC-TSVD relative error data table with tolerance comparison}, LoC-TSVD achieves relative errors virtually indistinguishable from exact truncated SVD across all kernels listed in \eqref{equ: kernel list} on both uniform and random domain discretizations.

Furthermore, \autoref{fig: boxplot_all_kernels_all_tols_TSVD} illustrates the error distribution of LoC-TSVD and exact SVD over $500$ independent random point cloud realizations. Across all tested tolerances $\varepsilon \in \{10^{-4}, 10^{-6}, 10^{-8}\}$, LoC-TSVD exhibits an extremely compact interquartile range that tightly mirrors the optimal lower bound of exact SVD truncation. These results confirm that LoC-TSVD extracts near-optimal, orthonormal spectral bases without computing dense $O(N^3)$ SVDs.

\begin{table}[ht]
\centering
\resizebox{0.9\textwidth}{!}{%
\begin{tabular}{llcccccc}
\toprule
& & \multicolumn{3}{c}{Uniform Grid} 
& \multicolumn{3}{c}{Random Grid (Mean over 500 trials)} \\

\cmidrule(lr){3-5} \cmidrule(lr){6-8}

Kernel & Method
& $\varepsilon=10^{-4}$
& $\varepsilon=10^{-6}$
& $\varepsilon=10^{-8}$
& $\varepsilon=10^{-4}$
& $\varepsilon=10^{-6}$
& $\varepsilon=10^{-8}$ \\

\midrule

\multirow{3}{*}{$1/r$}
& Rank $(r)$ &6  &12  &19  & -- & -- & -- \\
& LoC-TSVD &$1.035351\times 10^{-4}$  &$1.146664\times 10^{-6}$  &$1.415955\times 10^{-8}$  &$8.527126\times 10^{-5}$  &$8.627567\times 10^{-7}$  &$9.805651\times 10^{-9}$  \\
& SVD   &$1.035351\times 10^{-4}$  &$1.146664\times 10^{-6}$  &$1.415955\times 10^{-8}$  &$8.527126\times 10^{-5}$  &$8.627566\times 10^{-7}$  &$9.805647\times 10^{-9}$  \\
\midrule

\multirow{3}{*}{$\log r$}
& Rank $(r)$ &5  &9  &13  & -- & -- & -- \\
& LoC-TSVD &$4.242332\times 10^{-5}$  &$2.134395\times 10^{-7}$  &$1.332048\times 10^{-9}$  &$3.591682\times 10^{-5}$  &$1.617050\times 10^{-7}$  &$8.840601\times 10^{-10}$  \\
& SVD   &$4.242332\times 10^{-5}$  &$2.134395\times 10^{-7}$  &$1.332048\times 10^{-9}$  &$3.591682\times 10^{-5}$  &$1.617050\times 10^{-7}$  &$8.839129\times 10^{-10}$  \\
\midrule

\multirow{3}{*}{$\dfrac{\cos r}{r}$}
& Rank $(r)$ &10  &17  &25  & -- & -- & -- \\
& LoC-TSVD &$7.154636\times 10^{-5}$  &$9.996832\times 10^{-7}$  &$1.011671\times 10^{-8}$  &$5.698595\times 10^{-5}$  &$7.541357\times 10^{-7}$  &$6.936725\times 10^{-9}$  \\
& SVD   &$7.154636\times 10^{-5}$  &$9.996832\times 10^{-7}$  &$1.011671\times 10^{-8}$  &$5.698595\times 10^{-5}$  &$7.541357\times 10^{-7}$  &$6.935993\times 10^{-9}$  \\
\midrule

\multirow{3}{*}{$e^{-r^2}$}
& Rank $(r)$ &12  &23  &35  & -- & -- & -- \\
& LoC-TSVD &$8.905985\times 10^{-5}$  &$7.989985\times 10^{-7}$  &$1.084042\times 10^{-8}$  &$1.250953\times 10^{-4}$  &$2.465211\times 10^{-3}$  &$3.908079\times 10^{-4}$  \\
& SVD   &$8.905985\times 10^{-5}$  &$7.989982\times 10^{-7}$  &$1.081347\times 10^{-8}$  &$7.146567\times 10^{-5}$  &$5.662742\times 10^{-7}$  &$6.775901\times 10^{-9}$  \\
\midrule

\multirow{3}{*}{$\sqrt{1+r^2}$}
& Rank $(r)$ &6  &10  &15  & -- & -- & -- \\
& LoC-TSVD &$1.009817\times 10^{-5}$  &$3.520175\times 10^{-7}$  &$1.147103\times 10^{-8}$  &$8.568534\times 10^{-6}$  &$2.764857\times 10^{-7}$  &$8.606240\times 10^{-9}$  \\
& SVD   &$1.009817\times 10^{-5}$  &$3.520175\times 10^{-7}$  &$1.147103\times 10^{-8}$  &$8.568534\times 10^{-6}$  &$2.764857\times 10^{-7}$  &$8.606239\times 10^{-9}$  \\
\bottomrule
\end{tabular}
}
\caption{Comparison of relative errors between LoC-TSVD and exact truncated SVD for target tolerances $\varepsilon \in \{10^{-4},10^{-6},10^{-8}\}$ over $\mclx = [-3,-1]\times[0,2]$ and $\mcly = [1,3]\times[0,2]$ ($N = 65^2 = 4225$ points per domain, $P = 64$ candidate nodes and neighbor size $l = 2$). For random grids, reported metrics represent mean relative errors over 500 independent point realizations using the rank $r$ determined on the uniform grid.}

\label{tab: LoC-TSVD relative error data table with tolerance comparison}
\end{table}

\begin{figure}[ht]
    \centering
    \begin{subfigure}[t]{0.49\linewidth}
        \centering
        \includegraphics[width=\linewidth]{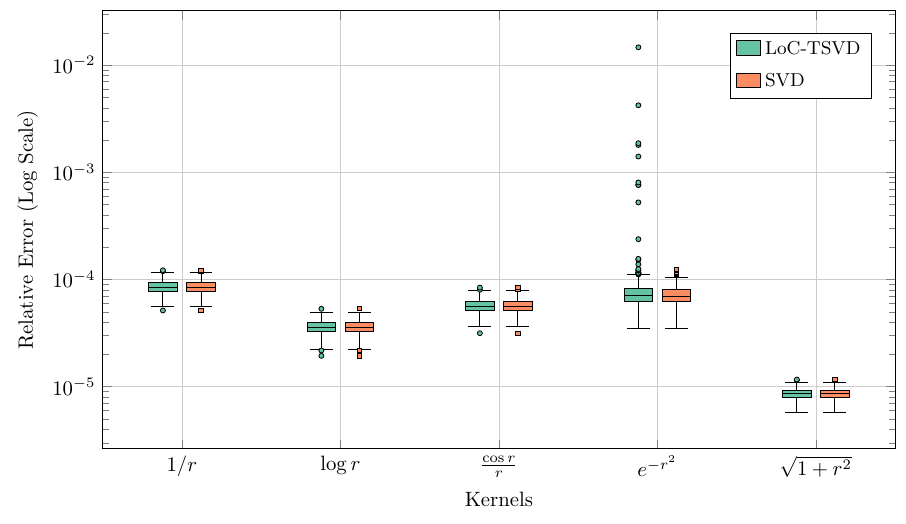}
        \caption{$\varepsilon = 1.0\textrm{e}\!-\!4$}
        \label{fig: box tol 1.0e-4}
    \end{subfigure}
    \begin{subfigure}[t]{0.49\linewidth}
        \centering
        \includegraphics[width=\linewidth]{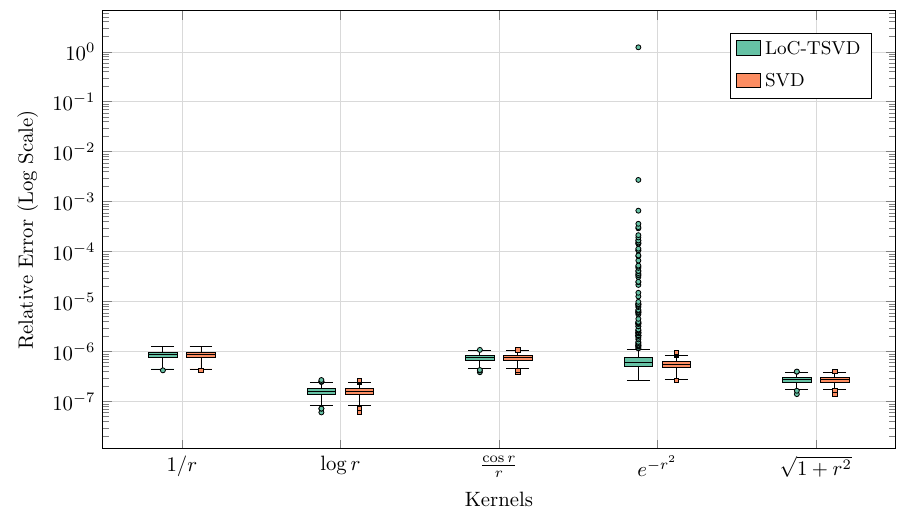}
        \caption{$\varepsilon = 1.0\textrm{e}\!-\!6$}
        \label{fig: box tol 1.0e-6}
    \end{subfigure}
    \begin{subfigure}[t]{0.49\linewidth}
        \centering
        \includegraphics[width=\linewidth]{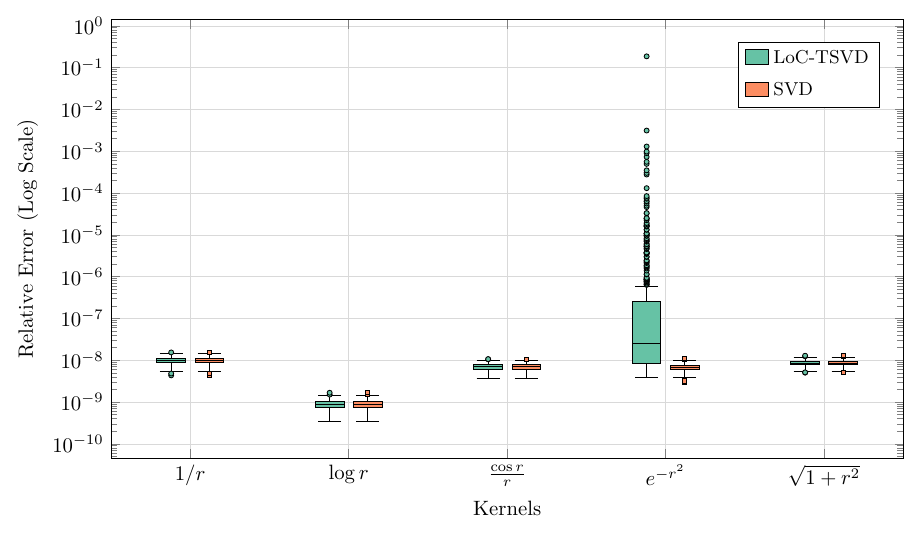}
        \caption{$\varepsilon = 1.0\textrm{e}\!-\!8$}
        \label{fig: box tol 1.0e-8}
    \end{subfigure}
    \caption{Relative errors computed over $500$ independent random discretizations of the domains $\mclx = [-3,-1]\times[0,2]$ and $\mcly = [1,3]\times[0,2]$ for all the kernels as mentioned in \eqref{equ: kernel list}. Given a tolerance $\varepsilon \in \{10^{-4},10^{-6},10^{-8}\}$ and for each method and kernel, the distribution of errors across random grids is summarized using boxplots: the central line indicates the median, the box represents the interquartile range (25th to 75th percentiles), whiskers indicate the range of typical (non-outlier) values, and individual points denote outliers.} 
    \label{fig: boxplot_all_kernels_all_tols_TSVD}
\end{figure}

To provide fine-grained spectral validation of the LoC-TSVD, we evaluate individual singular values against the exact analytical SVD spectrum. 
As shown in \autoref{fig: decay of singular values}, the normalized singular values computed by LoC-TSVD overlay perfectly with those of the exact SVD, reproducing the exact exponential decay. Furthermore, \autoref{fig: singular value differences} demonstrates that the element-wise normalized error $\Delta_i = |(\tilde{\sigma}_i / \tilde{\sigma}_1) - (\sigma_i / \sigma_1)|$ remains strictly bounded below the prescribed truncation threshold ($\varepsilon = 10^{-8}$). 

\begin{figure}[ht]
    \centering
    \begin{subfigure}[t]{0.48\linewidth}
        \centering
        \includegraphics[width=\linewidth]{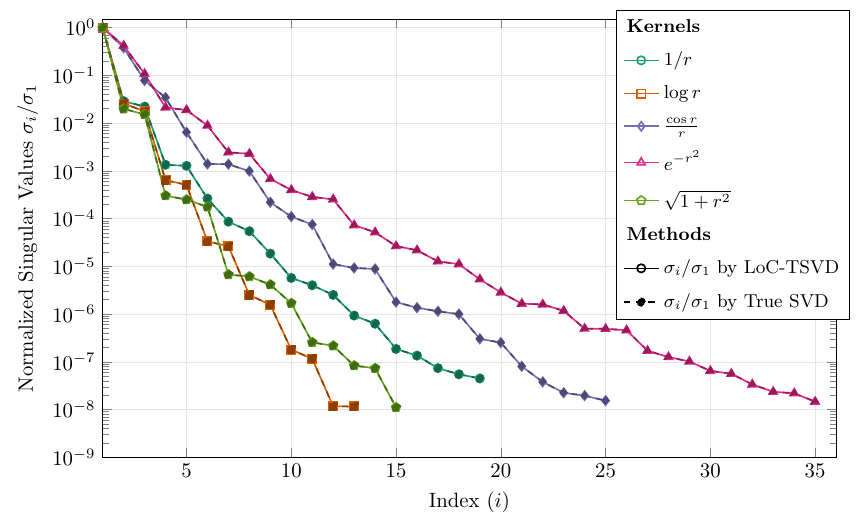}
        \caption{The singular values by LoC-TSVD and true SVD}
        \label{fig: decay of singular values}
    \end{subfigure}\hskip 0.4cm
    \begin{subfigure}[t]{0.48\linewidth}
        \centering
        \includegraphics[width=\linewidth]{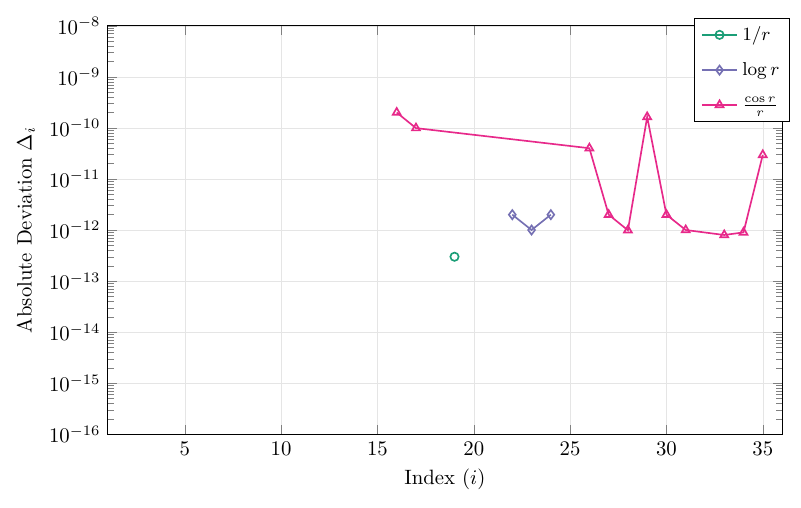}
        \caption{difference of normalized singular values of LoC-tsvd and actual SVD}
        \label{fig: singular value differences}
    \end{subfigure}
    \caption{Spectral decay profile and element-wise discrepancy validation for LoC-TSVD against exact SVD truncation across all kernel operators in \eqref{equ: kernel list} on a uniform grid at target tolerance $\varepsilon = 10^{-8}$. (a) Normalized singular value spectra ($\sigma_i / \sigma_1$) as a function of index $i$, demonstrating perfect overlay with exact SVD decay. (b) Absolute element-wise discrepancy $\Delta_i$ for evaluated singular value components, demonstrating that error deviations remain strictly bounded below the prescribed truncation floor $\varepsilon = 10^{-8}$.}
    \label{fig: comparison of singular values obtained by LoC-TSVD and actual SVD}
\end{figure}


To evaluate the structural properties of the basis matrices generated by LoC-TSVD (\autoref{alg: LoC-TSVD}), we measure two spectral metrics against exact analytical SVD in \autoref{tab: LoC-TSVD uniform grid orthogonality metrics} as follows.
\begin{enumerate}
    \item \textbf{Subspace Error} $E_{\text{sub}} = \|\widetilde{U}_r\widetilde{U}_r^T - U_{\text{true}}U_{\text{true}}^T\|_2$, capturing the spectral deviation between low-rank projection operators.
    \item \textbf{Orthogonality Drift} $E_{\text{orth}} = \|\widetilde{U}_r^T\widetilde{U}_r - I_r\|_{\max}$, measuring maximum element-wise deviation from self-orthogonality.
\end{enumerate}

As shown in \autoref{tab: LoC-TSVD uniform grid orthogonality metrics}, the subspace error $E_{\text{sub}}$ indicates that the basis vectors generated by LoC-TSVD do not perfectly align with the exact analytical singular vectors of $U_{\text{true}}$, incurring a mild geometric discrepancy ($\mathcal{O}(10^{-5})$ to $\mathcal{O}(10^{-3})$) due to localized Chebyshev proxy sampling. However, the orthogonality drift $E_{\text{orth}}$ remains consistently bounded near machine precision ($\approx 10^{-15}$) across all kernels and target tolerances. These results confirm that while LoC-TSVD operates over an approximate interpolation-based subspace, its thin QR post-projection phase guarantees strictly orthonormal basis columns down to floating-point precision.

\begin{table}[ht]
\centering
\resizebox{0.65\textwidth}{!}{%
\begin{tabular}{llccc}
\toprule
Kernel & Validation Metric
& $\varepsilon=10^{-4}$
& $\varepsilon=10^{-6}$
& $\varepsilon=10^{-8}$ \\

\midrule

\multirow{3}{*}{$1/r$}
& Rank $(r)$ & 6 & 12 & 19 \\
& Subspace Error $E_{\text{sub}}$    & $5.161914 \times 10^{-08}$ & $1.032383 \times 10^{-07}$ & $1.466008 \times 10^{-05}$ \\
& Orthogonality Drift $E_{\text{orth}}$  & $1.887379 \times 10^{-15}$ &1.887379 $ \times 10^{-15}$ & $1.887379 \times 10^{-15}$ \\

\midrule

\multirow{3}{*}{$\log r$}
& Rank $(r)$ & 5 & 9 & 13 \\
& Subspace Error $E_{\text{sub}}$    & $5.161914 \times 10^{-08}$ & $4.470348 \times 10^{-08}$ & $8.024521 \times 10^{-08}$ \\
& Orthogonality Drift $E_{\text{orth}}$  & $2.331468 \times 10^{-15}$ & $2.331468 \times 10^{-15}$ & $3.552714 \times 10^{-15}$ \\

\midrule

\multirow{3}{*}{$\dfrac{\cos r}{r}$}
& Rank $(r)$ & 10 & 17 & 25 \\
& Subspace Error $E_{\text{sub}}$    & $5.960464 \times 10^{-08}$ & $9.550703 \times 10^{-08}$ & $1.558985\times 10^{-04}$ \\
& Orthogonality Drift $E_{\text{orth}}$  & $3.330669 \times 10^{-15}$ & $3.330669 \times 10^{-15}$ & $3.330669 \times 10^{-15}$ \\

\midrule

\multirow{3}{*}{$e^{-r^2}$}
& Rank $(r)$ & 12 & 23 & 35 \\
& Subspace Error $E_{\text{sub}}$    & $2.520015\times 10^{-07}$ & $3.295590\times 10^{-05}$ & $4.353482\times 10^{-03}$ \\
& Orthogonality Drift $E_{\text{orth}}$  & $2.220446\times 10^{-15}$ & $2.220446\times 10^{-15}$ & $2.220446\times 10^{-15}$ \\

\midrule

\multirow{3}{*}{$\sqrt{1+r^2}$}
& Rank $(r)$ & 6 & 10 & 15 \\
& Subspace Error $E_{\text{sub}}$   & $0.000000\times 10^{-00}$ & $0.000000\times 10^{-00}$ & $ 8.046627\times 10^{-07}$ \\
& Orthogonality Drift $E_{\text{orth}}$  & $1.859624\times 10^{-15}$ & $1.859624\times 10^{-15}$ & $1.859624\times 10^{-15}$ \\
\bottomrule
\end{tabular}
}
\caption{Spectral subspace validity and vector orthonormality metrics for \textsf{LoC-TSVD} on a uniform grid across target tolerances $\varepsilon \in \{10^{-4}, 10^{-6}, 10^{-8}\}$. Problem domains span $\mathcal{X} = [-3,-1]\times[0,2]$ and $\mathcal{Y} = [1,3]\times[0,2]$ with discretization parameters matching \autoref{tab: LoC-TSVD relative error data table with tolerance comparison} ($N = 65^2 = 4225$ points per domain, univariate degree $p = 8 \implies P = 64$, neighbor size $l=2$). The Subspace Error tracks the spectral 2-norm difference of low-rank projection operators, $E_{\text{sub}} = \|\widetilde{U}_r\widetilde{U}_r^T - U_{\text{true}}U_{\text{true}}^T\|_2$, verifying exact column span matching. The Orthogonality Drift captures maximum element-wise deviation from self-orthogonality, $E_{\text{orth}} = \|\widetilde{U}_r^T\widetilde{U}_r - I_r\|_{\max}$.}
\label{tab: LoC-TSVD uniform grid orthogonality metrics}
\end{table}

\paragraph{Acknowledgments:}
The authors gratefully acknowledge the IIT Madras Central Library for providing access to Grammarly, which was helpful in improving the language and presentation of this manuscript. The first and second authors also acknowledge the University Grants Commission (UGC), Government of India, for providing financial support through the Senior Research Fellowship (SRF) and Junior Research Fellowship (JRF), respectively, during their PhD.

\paragraph{Author Contributions:}
\textit{[Author 1]}: Conceptualization, Algorithm Development, Mathematical Proofs, Implementation, Writing -- Original Draft. 
\textit{[Author 2]}: Mathematical Proofs, Theoretical Review, Proofreading.
\textit{[Author 3]}: Conceptualization, Theoretical Review, Validation, Supervision, Writing -- Review \& Editing.


\paragraph{Conflict of Interest:}
The authors declare no competing financial or non-financial interests related to the content of this manuscript.

{\footnotesize
\bibliographystyle{siam}
\bibliography{ref}}

@article{cambier2019fast,
  title={Fast low-rank kernel matrix factorization using skeletonized interpolation},
  author={Cambier, L{\'e}opold and Darve, Eric},
  journal={SIAM Journal on Scientific Computing},
  volume={41},
  number={3},
  pages={A1652--A1680},
  year={2019},
  publisher={SIAM}
}

@article{khan2024hodlrdd,
  title={HODLRdD: A new black-box fast algorithm for N-body problems in d-dimensions with guaranteed error bounds: Applications to integral equations and support vector machines},
  author={Khan, Ritesh and Kandappan, VA and Ambikasaran, Sivaram},
  journal={Journal of Computational Physics},
  volume={501},
  pages={112786},
  year={2024},
  publisher={Elsevier}
}

@book{trefethen2019approximation,
  title={Approximation theory and approximation practice, extended edition},
  author={Trefethen and Lloyd N},
  year={2019},
  publisher={SIAM}
}

@book{devore1993constructive,
  title={Constructive approximation},
  author={DeVore, Ronald A and Lorentz, George G},
  volume={303},
  year={1993},
  publisher={Springer Science \& Business Media}
}

@article{gunttner1980evaluation,
  title={Evaluation of Lebesgue constants},
  author={G{\"u}nttner, R{\"u}diger},
  journal={SIAM Journal on Numerical Analysis},
  volume={17},
  number={4},
  pages={512--520},
  year={1980},
  publisher={SIAM}
}

@book{apostol1974mathematical,
  title={Mathematical Analysis},
  author={Apostol, Tom M.},
  year={1974},
  edition={2nd},
  publisher={Addison-Wesley}
}

@article{gu1996efficient,
  title={Efficient algorithms for computing a strong rank-revealing QR factorization},
  author={Gu, Ming and Eisenstat, Stanley C},
  journal={SIAM Journal on Scientific Computing},
  volume={17},
  number={4},
  pages={848--869},
  year={1996},
  publisher={SIAM}
}

@article{cheng2005compression,
  title={On the compression of low rank matrices},
  author={Cheng, Hongwei and Gimbutas, Zydrunas and Martinsson, Per-Gunnar and Rokhlin, Vladimir},
  journal={SIAM Journal on Scientific Computing},
  volume={26},
  number={4},
  pages={1389--1404},
  year={2005},
  publisher={SIAM}
}

@article{greengard1987fast,
  title={A fast algorithm for particle simulations},
  author={Greengard, Leslie and Rokhlin, Vladimir},
  journal={Journal of computational physics},
  volume={73},
  number={2},
  pages={325--348},
  year={1987},
  publisher={Elsevier}
}

@article{massei2022hierarchical,
  title={Hierarchical adaptive low-rank format with applications to discretized partial differential equations},
  author={Massei, Stefano and Robol, Leonardo and Kressner, Daniel},
  journal={Numerical Linear Algebra with Applications},
  volume={29},
  number={6},
  pages={e2448},
  year={2022},
  publisher={Wiley Online Library}
}

@article{ho2016hierarchical,
  title={Hierarchical interpolative factorization for elliptic operators: differential equations},
  author={Ho, Kenneth L and Ying, Lexing},
  journal={Communications on Pure and Applied Mathematics},
  volume={69},
  number={8},
  pages={1415--1451},
  year={2016},
  publisher={Wiley Online Library}
}

@article{ambikasaran2015fast,
  title={Fast direct methods for Gaussian processes},
  author={Ambikasaran, Sivaram and Foreman-Mackey, Daniel and Greengard, Leslie and Hogg, David W and O’Neil, Michael},
  journal={IEEE transactions on pattern analysis and machine intelligence},
  volume={38},
  number={2},
  pages={252--265},
  year={2015},
  publisher={IEEE}
}

@article{ambikasaran2013large,
  title={Large-scale stochastic linear inversion using hierarchical matrices: Illustrated with an application to crosswell tomography in seismic imaging},
  author={Ambikasaran, Sivaram and Li, Judith Yue and Kitanidis, Peter K and Darve, Eric},
  journal={Computational Geosciences},
  volume={17},
  number={6},
  pages={913--927},
  year={2013},
  publisher={Springer}
}

@article{cortes1995support,
  title={Support-vector networks},
  author={Cortes, Corinna and Vapnik, Vladimir},
  journal={Machine learning},
  volume={20},
  number={3},
  pages={273--297},
  year={1995},
  publisher={Springer}
}

@article{singh2025rank,
  title={Rank of Matrices Arising out of Singular Kernel Functions},
  author={Singh, Sumit and Ambikasaran, Sivaram},
  journal={arXiv preprint arXiv:2510.14920},
  year={2025}
}

@article{bebendorf2003adaptive,
  title={Adaptive low-rank approximation of collocation matrices},
  author={Bebendorf, Mario and Rjasanow, Sergej},
  journal={Computing},
  volume={70},
  number={1},
  pages={1--24},
  year={2003},
  publisher={Springer}
}

@article{borm2005hybrid,
  title={Hybrid cross approximation of integral operators},
  author={B{\"o}rm, Steffen and Grasedyck, Lars},
  journal={Numerische Mathematik},
  volume={101},
  number={2},
  pages={221--249},
  year={2005},
  publisher={Springer}
}

@article{fong2009black,
  title={The black-box fast multipole method},
  author={Fong, William and Darve, Eric},
  journal={Journal of Computational Physics},
  volume={228},
  number={23},
  pages={8712--8725},
  year={2009},
  publisher={Elsevier}
}

@article{yesypenko2025simplified,
  title={A simplified fast multipole method based on strong recursive skeletonization},
  author={Yesypenko, Anna and Chen, Chao and Martinsson, Per-Gunnar},
  journal={Journal of Computational Physics},
  volume={524},
  pages={113707},
  year={2025},
  publisher={Elsevier}
}

@article{xing2020interpolative,
  title={Interpolative decomposition via proxy points for kernel matrices},
  author={Xing, Xin and Chow, Edmond},
  journal={SIAM Journal on Matrix Analysis and Applications},
  volume={41},
  number={1},
  pages={221--243},
  year={2020},
  publisher={SIAM}
}

@article{ye2020analytical,
  title={Analytical low-rank compression via proxy point selection},
  author={Ye, Xin and Xia, Jianlin and Ying, Lexing},
  journal={SIAM Journal on Matrix Analysis and Applications},
  volume={41},
  number={3},
  pages={1059--1085},
  year={2020},
  publisher={SIAM}
}

@article{bezanson2017julia,
  title={Julia: A fresh approach to numerical computing},
  author={Bezanson, Jeff and Edelman, Alan and Karpinski, Stefan and Shah, Viral B},
  journal={SIAM review},
  volume={59},
  number={1},
  pages={65--98},
  year={2017},
  publisher={SIAM}
}

@misc{kenneth_l_ho_2018_1254148,
  author    = {Kenneth L. Ho and Sheehan Olver},
  title     = {{LowRankApprox.jl}: Fast Low-Rank Matrix Approximation in Julia},
  year      = {2018},
  month     = may,
  publisher = {Zenodo},
  version   = {v0.1.2},
  doi       = {10.5281/zenodo.1254148},
  url       = {https://doi.org/10.5281/zenodo.1254148}
}

@article{hesthaven1998electrostatics,
  title={From electrostatics to almost optimal nodal sets for polynomial interpolation in a simplex},
  author={Hesthaven, Jan S},
  journal={SIAM Journal on Numerical Analysis},
  volume={35},
  number={2},
  pages={655--676},
  year={1998},
  publisher={SIAM}
}
\end{document}